\documentclass{amsart}
\usepackage{pgf,amsthm,amscd,amssymb,verbatim,epsf,amsmath,eurosym,amsfonts,mathrsfs,graphicx,tikz-cd,lscape}
\usepackage{amsthm,amscd,amssymb,verbatim,epsf,amsmath,amsfonts,mathrsfs,graphicx,dirtytalk,pdfpages,mdframed,tikz-cd,xy}
\usepackage[colorlinks=true,linkcolor=blue,citecolor=blue]{hyperref}
\usepackage[font=small,labelfont=bf]{caption} 
\usepackage{epigraph}
\begin{document}
\theoremstyle{plain}
\newtheorem{Thm}{Theorem}
\newtheorem{Cor}[Thm]{Cor}
\newtheorem{Ex}[Thm]{Example}
\newtheorem{Con}[Thm]{Conjecture}
\newtheorem{Main}{Main Theorem}
\newtheorem{Lem}[Thm]{Lemma}
\newtheorem{Prop}[Thm]{Proposition}

\theoremstyle{definition}
\newtheorem{Def}[Thm]{Definition}
\newtheorem{Note}[Thm]{Note}
\newtheorem{Question}[Thm]{Question}

\theoremstyle{remark}
\newtheorem{notation}[Thm]{Notation}
\renewcommand{\thenotation}{}

\errorcontextlines=0
%\numberwithin{equation}{section}
%\numberwithin{Thm}{section}
\renewcommand{\rm}{\normalshape}%
\newcommand{\transv}{\mathrel{\text{\tpitchfork}}}
\makeatletter
\newcommand{\tpitchfork}{%
  \vbox{
    \baselineskip\z@skip
    \lineskip-.52ex
    \lineskiplimit\maxdimen
    \m@th
    \ialign{##\crcr\hidewidth\smash{$-$}\hidewidth\crcr$\pitchfork$\crcr}
  }%
}
\makeatother

\title[From CT scans to 4-manifold topology]{From CT scans to 4-manifold topology \\via neutral geometry}

\author{Brendan Guilfoyle}
\address{Brendan Guilfoyle\\
          School of Science Technology Engineering and Mathematics\\
          Munster Technological University, Kerry\\
          Tralee\\
          Co. Kerry\\
          Ireland.}
\email{brendan.guilfoyle@mtu.ie}

\begin{abstract}
In this survey paper the ultrahyperbolic equation in dimension four is discussed from a geometric, analytic and topological point of view. The geometry centres on the canonical neutral metric on the space of oriented geodesics of 3-dimensional space-forms, the analysis discusses a mean value theorem for solutions of the equation and presents a new solution of the Cauchy problem over a certain family of null hypersurfaces, while the topology relates to generalizations of codimension two foliations of 4-manifolds.
\end{abstract} 
\keywords{Ultrahyperbolic equation, neutral geometry, X-ray transform, 4-manifold topology}
\subjclass[2010]{53A25,35Q99}

\date{\today}
\maketitle
%\tableofcontents
%\newpage

\setlength{\epigraphwidth}{0.7\textwidth}
\epigraph{\it The air is full of an infinity of straight lines and rays \\
which cut across each other without displacing each other and \\
which reproduce on whatever they encounter \\
the true form of their cause.}{{ Leonardo da Vinci\\MS. A. 2v, 1490}}

%%%%%%%%%%%%%%%%%%%%%%%%%%%%%%%%%%%%%%%%%%%%%%%%%%%%%%%%%%%%%%%%%%
\section{Introduction}
%%%%%%%%%%%%%%%%%%%%%%%%%%%%%%%%%%%%%%%%%%%%%%%%%%%%%%%%%%%%%%%%%%

Our staring point is, as the title suggests, the acquisition of density profiles of biological systems using the loss of intensity experienced by a ray traversing the system. Basic mathematical physics arguments imply that this loss is modelled by the integral of the density function along the ray. One goal of Computerized Tomography is to invert the X-ray transform: reconstruct a real-valued function on ${\mathbb R}^3$ from its integrals over families of lines. 

The reconstruction of a function on the plane from its value on all lines, or more generally, a function on Euclidean space from its value on all hyperplanes, dates back at least to Johann Radon \cite{Radon}. One could argue that Allan MacLeod Cormack's 1979 Nobel prize for the theoretical results behind CAT scans \cite{Cormack} is the closest that mathematics has come to winning a Nobel prize, albeit in Medicine. The choice of axial rays reduces the inversion of the X-ray transform to that of the Radon transform over planes in ${\mathbb R}^3$ \cite{Hounsfield73}.

The basic problems of tomography - acquisition and reconstruction - arise far more widely than just medical diagnostics, finding application in industry \cite{Zhang}, geology \cite{Tonai}, archaeology \cite{Payne} and transport security \cite{Mouton}. Indeed, advances in CT technology, trialed in Shannon Airport recently, could warrant the removal of the 100ml liquid rule for airplane travellers globally \cite{Russell}.

Rather surprisingly, sitting behind the X-ray transform and its many applications is a largely unstudied second order differential equation: the ultrahyperbolic equation. For a function $u$ of four variables $(X_1,X_2,X_3,X_4)$ the equation is
\begin{equation}\label{e:uhe}
\frac{\partial^2u}{\partial X_1^2}+\frac{\partial^2u}{\partial X_2^2}-\frac{\partial^2u}{\partial X_3^2}-\frac{\partial^2u}{\partial X_4^2}=0.
\end{equation}
The reasons for the relative paucity of mathematical research on the equation despite the link to tomography will be discussed below. 

The purpose of this mainly expository paper is to describe recent research on the ultrahyperbolic equation, its geometric context and its applications. It turns out that the ultrahyperbolic equation is best viewed in terms of a conformal class of neutral metrics and that in this context it advances new paradigms that can contribute to the understanding of four dimensional topology. We now discuss the mathematical background of this undertaking before giving a more detailed summary of the paper.

\vspace{0.1in}

%%%%%%%%%%%%%%%%%%%%%%%%%%%%%%%%%%%%%%%%%%%%%%%%%%%%%%%%%%%%%%%%%%
\subsection{Background}
The {\em X-ray transform} of a real valued function on ${\mathbb R}^3$ is defined by taking its integral over (affine) lines of ${\mathbb R}^3$. That is, given a real function $f:{\mathbb R}^3\rightarrow{\mathbb R}$ and a line $\gamma$ in ${\mathbb R}^3$, let
\[
u_f(\gamma) = \int_\gamma fdr,
\]
where $dr$ is the unit line element induced on $\gamma$ by the Euclidean metric on ${\mathbb R}^3$.

Thus we can view the X-ray transform of a function $f$ (with appropriate behaviour at infinity) as a map $u_f:{\mathbb L}({\mathbb R}^3)\rightarrow{\mathbb R}:\gamma\mapsto u_f(\gamma)$, where ${\mathbb L}({\mathbb R}^3)$, or ${\mathbb L}$ for short, is the space of oriented lines in ${\mathbb R}^3$. Here we pick an orientation on the line to simplify later local constructions, much as Leonardo does when invoking \textit{rays} as distinct from lines, and note that the space ${\mathbb L}$ double covers the space of lines. 

In comparison, the {\em Radon transform} takes a real-valued function on ${\mathbb R}^3$ and integrates it over \textit{planes} in ${\mathbb R}^3$. By elementary considerations, the space of affine planes in ${\mathbb R}^3$ is three dimensional, equal to the dimension of the underlying space, while the space of oriented lines is four dimensional. 

Thus, by dimension count, if we consider the problem of inverting the two transforms, given a function on planes one can reconstruct the original function on ${\mathbb R}^3$, while the problem is over-determined for functions on lines. The consistency condition for a function on line space to come from an integral of a function on ${\mathbb R}^3$ is exactly the ultrahyperbolic equation \cite{fjohn}.  

Viewed simply as a partial differential equation, equation (\ref{e:uhe}) is neither elliptic nor hyperbolic, and so many standard techniques of partial differential equation do not apply. Indeed, in early editions of their influential classic {\it Methods of Mathematical Physics}, Richard Courant and David Hilbert showed that the ultrahyperbolic equation in ${\mathbb R}^{2,2}$ has an ill-posed Cauchy boundary value problem when the boundary has Lorentz signature, thus relegating the equation as unphysical in a mechanical sense. 

It was Fritz John who in 1937 proved that, to the contrary, the ultrahyperbolic equation can have a well-posed characteristic boundary value problem if the boundary 3-manifold is assumed to be \textit{null}, rather than Lorentz \cite{fjohn}. Later editions of Courant and Hilbert's book acknowledge John's contribution and his discovery of the link to line space, but study of the ultrahyperbolic equation never took off in the way that it did for elliptic and hyperbolic equations.

On the other hand, by reducing the X-ray transform to the Radon transform for certain null configurations of lines,  Cormack side-stepped the ultrahyperbolic equation altogether. Moreover, for applied mathematicians, the equation, or its associated John's equations, arises mainly as a compatibility condition if more than a 3-manifold's worth of data is acquired. Its possible utility from that perspective therefore is to check such excess data, rather than to help reconstruct the function.

Our first goal, contained in Section \ref{s:2} is the geometrization of the ultrahyperbolic equation. In particular, we view it as the Laplace equation of the canonical metric ${\mathbb G}$ of signature $(++--)$ on the space ${\mathbb L}$ of oriented lines in ${\mathbb R}^3$ \cite{GK05}. The fact that ${\mathbb G}$ is conformally flat and has zero scalar curvature means that a conformal multiple of a harmonic function satisfies the flat ultrahyperbolic equation (\ref{e:uhe}).  Fritz John did not explicitly use the neutral metric, but at the cost of the introduction of unmotivated multiplicative factors in calculations, factors that can now be related with the conformal factor of the metric. 

The introduction of the neutral metric not only clarifies the ultrahyperbolic equation, but it highlights the role of the conformal group in tomography. Properties such as conformal flatness of a metric, zero distance between points or nullity of a hypersurface are properties of the conformal class of a metric. Moreover, mathematical results can be extended by applying conformal maps \cite{CG22b}. 

Section \ref{s:2} describes how these neutral conformal structures arise in the space of oriented geodesics of any 3-dimensional space-form, namely ${\mathbb R}^3$, ${\mathbb S}^3$ and ${\mathbb H}^3$. The commonality between these three spaces allows one to apply many of the results (mean value theorem, doubly ruled surfaces, null boundary problems) to non-flat spaces. Surprisingly, electrical impedance tomography calls for negative curvature and so tomography in hyperbolic 3-space is not quite as fanciful as it may at first seem - see \cite{Berenstein}. The link between the ultrahyperbolic equation and the neutral metric on the space of oriented geodesics in ${\mathbb H}^3$ as given in Theorem \ref{p:lh3} is new and so the full proof is given below. 

In Section \ref{s:3} conformal methods are used to extend both a classical mean value theorem and its interpretation in terms of doubly ruled surfaces in ${\mathbb R}^3$. Aside from the discussion of the conformal extension of the mean value theorem, the section contains a new geometric formula for a solution of the ultrahyperbolic equation given only  values on the null hypersurface formed by lines parallel to a fixed plane. In fact, this example was considered by John, but the geometric version we present using the null cone of the neutral metric has not appeared elsewhere. 

The final Section turns to global aspects of complex points on Lagrangian surfaces in ${\mathbb L}$ and an associated boundary value problem for the Cauchy-Riemann operator. This proof of the Carath\'eodory Conjecture using the canonical neutral metric on the space of oriented lines \cite{GK11} is under review,  but significant parts of the arguments have now appeared in print. In particular, the essence as to {\em why} the Conjecture is true - namely the size of the Euclidean group - has been established \cite{GK20a} and shown to be sharp \cite{G20}.

The efficacy of second order methods of parabolic partial differentiation in higher codimension has also been proven in this context for both interior \cite{GK19a} and boundary problems \cite{GK23a}. The final argument hinges on the technical point as to whether a hyperbolic angle condition in codimension two in dimension four can be made \textit{sticky} enough to confine the boundary of a line congruence evolving under mean curvature flow. This is the sole remaining part of the proof under review.

Having established the \textit{why}, this approach to the Carath\'edory Conjecture also lends itself to other independent methods of completion - one needs only to establish the existence of enough holomorphic discs attached to a given Lagrangian surface and the Conjecture follows. Indeed, a local index bound \cite{GK12} and a conjecture of Toponogov \cite{GK20b} would also follow from existence of such families. This could be proven, for example, by the use of the method of continuity and pseudo-holomorphic curves \cite{Gromov85}, which would be a first order rather than second order proof. In any event, the acceptance that this infamous Conjecture has been finally put to rest will probably only come about when it has been proven at least twice.

A positive outcome of these developments has been the first application of differential geometry in the theory of complex polynomial: the index bound for an isolated umbilic point on a real analytic surface has been shown to restrict the number of zeros inside the unit circle for a polynomial with self-inversive second derivative \cite{GK23b}. This and related issues are discussed in more detail in Section \ref{s:4}.

The reason codimension two has a special significance in four dimensional topology is briefly discussed and the final section considers topological obstructions to neutral metrics as applied to closed 4-manifolds. In the case where the 4-manifold is compact with boundary, many open questions remain about what geometric information from a neutral metric can be seen at the boundary. Whether for a neutral 4-manifold with null boundary, coming full circle, it is possible to X-ray the inside and explore its topology.

\vspace{0.1in}
%%%%%%%%%%%%%%%%%%%%%%%%%%%%%%%%%%%%%%%%%%%%%%%%%%%%%%%%%%%%%%%%%%%%%%%%%%%%%%%%%%%%%%%%%%%%%
\section{The Geometry of Neutral Metrics}\label{s:2}

This section discusses the geometry of metrics of indefinite signature $(++--)$. While the study of positive definite metrics and Lorentz metrics are very well-developed, the neutral signature case is less well understood, even in dimension four. Rather than the general theory, of which \cite{Davidov} is a good survey, the section will focus on spaces of geodesics and the invariant neutral structures associated with them.

\vspace{0.1in}
%%%%%%%%%%%%%%%%%%%%%%%%%%%%%%%%%%%%%%%%%%%%%%%%%%%%%%%%%%%%%%%%%%%%%%%%%%%%%%%%%%%%%%%%%%%%%
\subsection{The Space of Oriented Lines}\label{s:2.1}

The space ${\mathbb L}$ of oriented lines (or \textit{rays}) of Euclidean ${\mathbb R}^3$ can be identified with the set of tangent vectors of ${\mathbb S}^2$ by noting that
\begin{equation}\label{d:l}
{\mathbb L}=\{ \vec{U},\vec{V}\in{\mathbb R}^3\;|\;\; |\vec{U}|=1\; {\mbox{ and }} \;\vec{U}\cdot\vec{V}=0\;\}=T{\mathbb S}^2,
\end{equation}

where $\vec{U}$ is the direction vector of the line and $\vec{V}$ the perpendicular distance vector to the origin. 

Topologically, ${\mathbb L}$ is a non-compact simply connected 4-manifold which can be viewed as the two dimensional vector bundle over ${\mathbb S}^2$ with Euler number two. One can see the Euler number by taking the zero section, which is the 2-sphere of oriented lines through the origin and perturbing it to another sphere of oriented lines (the oriented lines through a nearby point, for example). The two spheres are easily seen to intersect in two oriented lines, hence the Euler number of the bundle is two.

This space comes with a natural projection map $\pi:{\mathbb L}\rightarrow{\mathbb S}^2$ which takes an oriented line to its unit direction vector $\vec{U}$. In fact, there is a wealth of canonical geometric structures on ${\mathbb L}$, where canonical means invariant under the Euclidean group. These include a neutral K\"ahler structure, a fibre metric and an almost paracomplex structure. All three have a role to play in what follows and so we take some time to describe them in detail.

To start with the K\"ahler metric on ${\mathbb L}$, one has
\vspace{0.1in}
\begin{Thm}\cite{GK05}\label{t:gk}
The space ${\mathbb L}$ of oriented lines of ${\mathbb R}^3$ admits a metric ${\mathbb G}$ that is invariant under the Euclidean group acting on lines. The metric is of neutral signature $(++--)$, is conformally flat and scalar flat, but not Einstein. 

It can be supplemented by a complex structure $ J_0$ and symplectic structure $\omega$, so that $({\mathbb L},{\mathbb G}, J_0,\omega)$ is a neutral K\"ahler 4-manifold.
\end{Thm}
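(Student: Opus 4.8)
The plan is to realise all three structures explicitly on $\mathbb{L}=T\mathbb{S}^2$ in holomorphic coordinates and then verify the stated properties by direct computation. First I would identify $\mathbb{S}^2$ with $\mathbb{CP}^1$ via stereographic projection, writing the direction vector $\vec{U}$ as a function $\vec{U}(\xi)$ of a complex coordinate $\xi$, and introduce a fibre coordinate $\eta$ recording the perpendicular vector $\vec{V}$ as an element of $T_{\vec{U}}\mathbb{S}^2$. The pair $(\xi,\eta)$ furnishes local holomorphic coordinates exhibiting $\mathbb{L}$ as the total space of a holomorphic line bundle over $\mathbb{CP}^1$, and the associated integrable almost complex structure is the required $J_0$.

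Next I would produce the metric and symplectic form together as a compatible pair. The natural candidate for $\omega$ is the canonical symplectic form on $T\mathbb{S}^2$, obtained by transporting the tautological symplectic structure of $T^*\mathbb{S}^2$ across the round-metric identification $T\mathbb{S}^2\cong T^*\mathbb{S}^2$; this is automatically closed since it is exact. I would then \emph{define} $\mathbb{G}(X,Y)=\omega(X,J_0Y)$ and check, by reading off the coefficients in the $(\xi,\eta)$ coordinates, that $\mathbb{G}$ is symmetric and nondegenerate and that $J_0$ is $\mathbb{G}$-orthogonal. Diagonalising the symbol of $\mathbb{G}$ at a point then shows the signature is $(++--)$: because the tautological form pairs base directions against fibre directions, the resulting metric cannot be definite, and the count of positive and negative eigenvalues comes out as two and two.

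Invariance under the Euclidean group would follow with no extra computation. The group splits as rotations times translations: rotations act on $\xi$ as the group $PSU(2)\cong SO(3)$ of Möbius transformations preserving the round structure of $\mathbb{CP}^1$, while translations act affinely along the fibre coordinate $\eta$. Since every ingredient of $(\mathbb{G},J_0,\omega)$ is assembled solely from the round metric and complex structure of $\mathbb{S}^2$ and from the linear bundle structure — all of which these actions preserve — each of the three tensors is manifestly invariant.

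The Kähler property then reduces to $d\omega=0$, already noted, together with integrability of $J_0$ and the compatibility just verified, which forces $\nabla J_0=0$ for the Levi-Civita connection of $\mathbb{G}$. The substantive work, and the main obstacle, is the curvature analysis. I would compute the Levi-Civita connection and Riemann tensor of $\mathbb{G}$ in the $(\xi,\eta)$ coordinates and extract: the scalar curvature, to confirm it vanishes; the Ricci tensor, to exhibit a component not proportional to $\mathbb{G}$ and hence conclude the metric is not Einstein; and the Weyl tensor, to confirm it vanishes. Since in dimension four conformal flatness is equivalent to vanishing of the Weyl tensor, this last verification is the crux. The neutral signature demands care in raising and lowering indices, but the $SO(3)$-symmetry should collapse the independent components to a small number, keeping the computation tractable.
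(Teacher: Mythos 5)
Your proposal is correct and follows essentially the same route as the paper (and the cited source \cite{GK05}): identify ${\mathbb L}$ with $T{\mathbb S}^2$ in holomorphic coordinates $(\xi,\eta)$, take $J_0$ from the complex structure and $\omega$ as the pullback of the canonical symplectic form on $T^*{\mathbb S}^2$ via the round metric, define ${\mathbb G}$ as their composition, and verify signature, Euclidean invariance and the curvature claims by direct computation. The only cosmetic difference is that you obtain integrability of $J_0$ for free from the holomorphic coordinates, whereas the paper defines $J_0$ geometrically as rotation by $90^{\circ}$ about the line and identifies it with the coordinate complex structure afterwards.
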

\vspace{0.1in}
Here the complex structure $J_0$ is defined at a point $\gamma\in{\mathbb L}$ by rotation through 90$^o$ about the oriented line $\gamma$. This structure was considered in a modern context first by Nigel Hitchin \cite{Hitchin82}, who dated it back at least to Karl Weierstrass in 1866 \cite{Weier66}. 

The symplectic structure $\omega$ is by definition a non-degenerate closed 2-form on ${\mathbb L}=T{\mathbb S}^2$, and it can be obtained by pulling back the canonical symplectic structure on the cotangent bundle $T^*{\mathbb S}^2$ by the round metric on ${\mathbb S}^2$. 

These two structures are invariant under Euclidean motions acting on line space and fit nicely together in the sense that $
\omega(J\cdot,J\cdot)=\omega(\cdot,\cdot)$. The metric obtained by their composition $ {\mathbb G}(\cdot,\cdot)=\omega(J\cdot,\cdot)$, is of neutral signature $(++--)$, however. The existence of a Euclidean invariant metric of this signature on line space was first noted by Eduard Study in 1891 \cite{Study91}, but its neutral K\"ahler nature wasn't discovered until 2005 \cite{GK05}. Interestingly, the space of oriented lines in Euclidean ${\mathbb R}^n$ admits an invariant metric iff $n=3$, and in this dimension it is pretty much unique \cite{Salvai05}. This accident of low dimensions offers an alternative geometric framework to investigate the semi-direct nature of the Euclidean group in dimension three, one which expresses three dimensional Euclidean quantities in terms of neutral geometric quantities in four dimensions. 

This is but one of the many accidents that arise in the classification of invariant symplectic structures, (para)complex structures, pseudo-Riemannian metrics and (para)K\"ahler structures on the space of oriented geodesics of a simply connected pseudo-Riemannian space of constant curvature or a rank one Riemannian symmetric space \cite{AGK11}. 

Returning to oriented line space, the neutral metric ${\mathbb G}$ at a point $\gamma\in{\mathbb L}$ can be interpreted as the angular velocity of any line near $\gamma$. If the angular velocity is zero - and hence the oriented lines are null-separated - then the lines either intersect or are parallel. One can adopt the projective view, which arises quite naturally, that parallel lines intersect at infinity, and then nullity of a curve with respect to the neutral metric implies the intersection of the underlying infinitesimal lines in ${\mathbb R}^3$. Nullity for higher dimensional submanifolds will be discussed in the next section. 

The invariant neutral metric is not flat, although its scalar curvature is zero and its conformal curvature vanishes. The non-zero Ricci tensor has zero neutral length, but its interpretation in terms of a recognisable energy momentum tensor is lacking. Given the difference of signature to Lorentz spacetime, it is also difficult to see the usual physical connection as in general relativity. 

Since the metric is conformally flat, there exist local coordinates $(X_1,X_2,X_3,X_4)$ and a strictly positive function $\Omega$  so that it can be written as
\begin{equation}\label{e:cflcoords}
ds^2=\Omega^2(dX_1^2+dX_2^2-dX_3^2-dX_4^2).
\end{equation}
Such a metric has zero scalar curvature iff $\Omega$ satisfies the ultrahyperbolic equation, thus characterising a Yamabe-type problem for neutral metrics \cite{Lee87}.  Such coordinates $(X_1,X_2,X_3,X_4)$ were first constructed  using the Pl\"ucker embedding on the space of lines by John \cite{fjohn}, who showed that the compatibility condition for a function on line space to be the integral of a function on ${\mathbb R}^3$ is exactly the flat ultrahyperbolic equation in these coordinates.

Write ${\mathbb R}^{2,2}$ for ${\mathbb R}^{4}$ endowed with the flat neutral metric. In Section \ref{s:3} the ultrahyperbolic equation will be considered in more detail and an explicit formula presented for data prescribed on a certain null hypersurface.

A peculiarity of neutral signature metrics in dimension four is the existence of 2-planes on which the induced metric is identically zero, so-called {\em totally null} 2-planes. In ${\mathbb R}^{2,2}$ there are a disjoint union of two $S^1$'s worth of totally null 2-planes, termed $\alpha-$planes and $\beta-$planes.

One way to see these is to consider the null cone ${\mathcal C}_0$ at the origin. This is a cone over the 2-torus $S^1\times S^1$ given by 
\[
X_1^2+X_2^2-X_3^2-X_4^2=0.
\]
An $\alpha-$plane is a cone over a diagonal in the torus $t\mapsto (X_1+iX_2,X_3+iX_4)=(e^{it},e^{i(t+t_0)})$, while a $\beta-$plane is a cone over an anti-diagonal in the torus $t\mapsto(X_1+iX_2,X_3+iX_4)=(e^{it},e^{-i(t+t_0)})$.

This null structure exists in the tangent space at a point in any neutral four manifold and if one can piece it together in a geometric way there can be global topological consequences. One natural question is whether the $\alpha-$planes or $\beta-$plane fields are integrable in the sense of Frobenius, thus having surfaces to which the plane fields are tangent. These are guaranteed for the invariant neutral metrics endowed on the space of oriented geodesics of any 3-dimensional space-form, as they are all conformally flat \cite{DW08}. 

Roughly speaking, an $\alpha-$surface in a geodesic space is the set of oriented geodesics through a fixed point, while $\beta-$surfaces are the oriented geodesics contained in a fixed totally geodesic surface in the ambient 3-manifold. Thus a neutral metric on a geodesic space allows for the geometrization of both intersection and containment.

Restricting our attention to ${\mathbb R}^3$, the $\alpha-$planes in ${\mathbb L}$ are the oriented lines through a point or the oriented lines with the same fixed direction. The latter are the 2-dimensional fibres of the canonical projection $\pi:{\mathbb L}\rightarrow {\mathbb S}^2$ taking an oriented line to its direction.  

The distance between parallel lines in ${\mathbb R}^3$ induces a fibre metric on $\pi^{-1}(p)$ for $p\in{\mathbb S}^2$. If $\xi$ is a complex coordinates about the North pole of ${\mathbb S}^2$ given by stereographic projection and $\eta$ the complex fibre coordinate in the projection $T{\mathbb S}^2\rightarrow{\mathbb S}^2$, then the fibre metric has the form 
\begin{equation}\label{e:fibmet}
d\tilde{s}^2=\frac{4d\eta\;d\bar{\eta}}{(1+\xi\bar{\xi})^2}.
\end{equation}
In Section \ref{s:3.3} this arises in the X-ray transform from certain null data. 

Note that the complex coordinates $(\xi,\eta)$ on ${\mathbb L}$ are essentially the vectors $U$ and $V$ in definition (\ref{d:l}), the direction and perpendicular distance to the origin. They are related to John's conformal flat coordinates  $(X_1, X_2, X_3,X_4)$ by
\vspace{0.1in}
\begin{Prop}\label{p:conf}\cite{CG22a}
For complex coordinates $(\xi,\eta)$ on $T{\mathbb S}^2$, over the upper hemisphere $|\xi|^2<1$ the conformal coordinates $(X_1, X_2, X_3,X_4)$ are
\[
X_1+iX_2=\frac{2}{1-\xi^2\bar{\xi}^2}\left(\eta+\xi^2\bar{\eta}-i(1+\xi\bar{\xi})\xi\right)
\]
\[
X_3+iX_4=\frac{2}{1-\xi^2\bar{\xi}^2}\left(\eta+\xi^2\bar{\eta}+i(1+\xi\bar{\xi})\xi\right).
\]
\end{Prop}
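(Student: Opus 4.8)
The plan is to verify that the functions $(X_1,X_2,X_3,X_4)$ proposed in the statement really are John's conformally flat coordinates, that is, that they bring the neutral K\"ahler metric $\mathbb{G}$ of Theorem \ref{t:gk} into the form \eqref{e:cflcoords}. Since conformal flatness of $\mathbb{G}$ (established in Theorem \ref{t:gk}) already guarantees that such coordinates exist, and John's coordinates are characterised up to the conformal group by \eqref{e:cflcoords}, it suffices to check that the given substitution pulls the flat neutral form $dX_1^2+dX_2^2-dX_3^2-dX_4^2$ back to a strictly positive multiple of the explicit expression for $\mathbb{G}$ in the coordinates $(\xi,\eta)$ recorded in \cite{GK05}; the conformal factor $\Omega$ is then read off.

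I would organise the computation around the complex combinations $W_+=X_1+iX_2$ and $W_-=X_3+iX_4$, using the elementary identity
\[
dX_1^2+dX_2^2-dX_3^2-dX_4^2=\lvert dW_+\rvert^2-\lvert dW_-\rvert^2=\mathrm{Re}\!\left[\,d(W_++W_-)\,\overline{d(W_+-W_-)}\,\right].
\]
The decisive simplification is that the proposed formulas collapse to
\[
W_+-W_-=\frac{-4i\,\xi}{1-\xi\bar\xi},\qquad W_++W_-=\frac{4(\eta+\xi^2\bar\eta)}{1-\xi^2\bar\xi^2},
\]
so that $W_+-W_-$ is a function of $\xi$ alone. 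Consequently $\overline{d(W_+-W_-)}$ contains only the base differentials $d\xi,d\bar\xi$, and every term of the pulled-back form is a product of one differential from $d(W_++W_-)$ with one of $d\xi,d\bar\xi$. This forces the absence of any $(d\eta)^2$, $(d\bar\eta)^2$ or $d\eta\,d\bar\eta$ component, so the pullback is automatically linear in the fibre coordinate $\eta$, exactly as $\mathbb{G}$ is.

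Differentiating the two displayed expressions (treating $\xi,\bar\xi,\eta,\bar\eta$ as independent) and taking the real part of the product, a short calculation shows that the holomorphic--holomorphic terms $d\eta\,d\xi$ and $d\bar\eta\,d\bar\xi$ cancel, while the mixed terms $d\eta\,d\bar\xi$ and $d\bar\eta\,d\xi$ survive; matching their coefficients against those of $\mathbb{G}$ fixes $\Omega$ as a function of $\xi\bar\xi$ alone, strictly positive on the hemisphere $\lvert\xi\rvert^2<1$ where $1-\xi^2\bar\xi^2>0$ and the coordinates are nonsingular. As an independent check one can instead proceed constructively from John's Pl\"ucker-based construction \cite{fjohn}, expressing the direction vector $\vec U$ and perpendicular vector $\vec V$ of \eqref{d:l} through stereographic projection in terms of $(\xi,\eta)$ and substituting into John's definition.

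The main obstacle is neither conceptual nor concerns the fibre terms, which fall out cleanly as above, but rather the bookkeeping for the base--base components: one must compute the messier $d\xi$ and $d\bar\xi$ coefficients of $d(W_++W_-)$, which involve $\eta$ and $\bar\eta$, and confirm that the \emph{single} conformal factor $\Omega$ already determined from the fibre--base terms simultaneously reproduces the $d\xi\,d\bar\xi$, $(d\xi)^2$ and $(d\bar\xi)^2$ components of $\mathbb{G}$. This consistency across all components is the substantive content of the verification.
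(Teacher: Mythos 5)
The paper itself offers no proof of Proposition~\ref{p:conf}; it is stated with a citation to \cite{CG22a}, so there is nothing internal to compare against, and your proposal must stand on its own as a direct verification. On that score it is sound. The two pivotal simplifications you isolate are correct: since $1-\xi^2\bar\xi^2=(1-\xi\bar\xi)(1+\xi\bar\xi)$, one indeed gets $W_+-W_-=-4i\xi/(1-\xi\bar\xi)$, a function of the base coordinate alone, and the polarisation identity $|dW_+|^2-|dW_-|^2=\mathrm{Re}\bigl[d(W_++W_-)\,\overline{d(W_+-W_-)}\bigr]$ holds for symmetric products because the cross terms $b\bar a-a\bar b$ are purely imaginary. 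Your cancellation claim also checks out: the $d\eta\,d\xi$ and $d\bar\eta\,d\bar\xi$ coefficients are $i$ times conjugate quantities with a common real prefactor, so their real part vanishes, while the surviving mixed terms give $\tfrac{16}{(1-\xi\bar\xi)^2}\,\mathrm{Im}(d\bar\eta\,d\xi)$, matching the fibre--base part of ${\mathbb G}$ with $\Omega=2(1+\xi\bar\xi)/(1-\xi\bar\xi)$, positive precisely on $|\xi|^2<1$ where the formulas are nonsingular. The one thing separating this from a complete proof is the step you yourself flag: the $d\xi^2$, $d\bar\xi^2$ and $d\xi\,d\bar\xi$ coefficients, which carry the $\eta$-dependence, must be shown to reproduce the corresponding components of ${\mathbb G}$ with the \emph{same} $\Omega$; this is routine but is the only place an error could hide, and it is described rather than executed. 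Note also that your ``independent check'' via John's Pl\"ucker construction is almost certainly the route taken in \cite{CG22a} --- deriving the coordinates from $\vec U,\vec V$ rather than verifying them --- so either completion would be acceptable.
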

\vspace{0.1in}

We turn now to null 3-manifolds (or hypersurfaces) in a neutral 4-manifold. An example of such is the null cone of a point in ${\mathbb L}$. Fix any oriented line $\gamma_0\in {\mathbb L}$ and define its {\it null cone} to be
\[
C_0(\gamma_0)=\{\gamma\in {\mathbb L}\;|\;\;Q(\gamma_0,\gamma)=0\},
\]
where $Q$ is the neutral distance function introduced by John \cite{fjohn}. For convenience introduce the complex conformal coordinates given in terms of the real conformal coordinates of equation (\ref{e:cflcoords}) by
\[
Z_1=X_1+iX_2\qquad\qquad Z_2=X_3+iX_4.
\]
If two oriented lines $\gamma,\tilde{\gamma}$ have complex conformal coordinates $(Z_1,Z_2)$ and  $(\tilde{Z}_1,\tilde{Z}_2)$ then the neutral distance function is
\[
Q(\gamma,\tilde{\gamma})=|Z_1-\tilde{Z}_1|^2-|Z_2-\tilde{Z}_2|^2.
\]
Two oriented lines have zero neutral distance iff either they are parallel or they intersect. The null cone arises in the formula for the ultrahyperbolic equation in Theorem \ref{t:uheinv}.

More generally, null hypersurfaces in ${\mathbb L}$ can be understood as 3-parameter families of oriented lines in ${\mathbb R}^3$ as follows. The degenerate hyperbolic metric induced on a null hypersurface ${\mathcal H}$ at a point $\gamma$ defines a pair of totally null planes intersecting on the null normal of the hypersurface in $T_\gamma {\mathcal H}$, one an $\alpha-$plane, one a $\beta-$plane. These plane fields can be integrable or contact, as explored in \cite{GG22a}.

There is a unique $\alpha-$surface in ${\mathbb L}$ containing $\gamma$ with tangent plane agreeing with the $\alpha-$plane at $\gamma$. Such a holomorphic Lagrangian surface is either the oriented lines through a point, or the oriented lines in a fixed direction. This is the neutral metric interpretation of the classical surface statement that a totally umbilic surface is either a sphere or a plane. 

Thus, the $\alpha-$plane at $\gamma\in{\mathbb L}$ identifies a point on each $\gamma\subset{\mathbb R}^3$ (albeit at infinity) which is the centre of the associated $\alpha-$surface. The locus of all these centres in ${\mathbb R}^3$ as one varies over ${\mathcal H}$ will be called the {\em focal set} of the null hypersurface. A null hypersurface is said to be {\em regular} if the focal set is a submanifold of ${\mathbb R}^3$.

\vspace{0.1in}
\begin{Prop}\label{p:nullity}
A regular null hypersurface ${\mathcal H}_n$ with focal set of dimension $n$ must be one of the following:
\begin{itemize}
\item[${\mathcal H}_0$:] The set of oriented lines parallel to a fixed plane,
\item[${\mathcal H}_1$:] The set of oriented lines through a fixed curve,
\item[${\mathcal H}_2$:] The set of oriented lines tangent to a fixed surface.
\end{itemize}
\end{Prop}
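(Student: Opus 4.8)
The plan is to reduce the statement to the analysis of a single map, the \emph{centre map} $c\colon\mathcal{H}\to\mathbb{R}^3\cup\{\infty\}$ sending $\gamma$ to the centre of its $\alpha$-surface, and to read off the three cases from the rank of $dc$. By construction $c(\gamma)$ lies on the oriented line $\gamma$, so every line of $\mathcal{H}$ passes through (or is asymptotic to) its own focal point, and the focal set is $F=c(\mathcal{H})$. First I would show that $c$ is constant along the leaves of the null foliation: the integral curve of the null normal $N$ through $\gamma$ is the flat pencil cut out by intersecting the $\alpha$-surface tangent to the $\alpha$-plane (the lines through $c(\gamma)$, or in a fixed direction) with the $\beta$-surface tangent to the $\beta$-plane (the lines in a fixed plane). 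All lines of a flat pencil share its vertex, so $c$ is constant along each null geodesic, whence $dc(N)=0$ and $c$ descends to the two-dimensional leaf space of the null foliation. This already forces $\dim F\le 2$, so that $n\in\{0,1,2\}$.

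The technical core is a \emph{chord lemma}: writing the $\alpha$-plane as $\langle N,A\rangle$, the image $dc(A)$ is proportional to the direction $\vec{U}$ of $\gamma$. The reason is that $A$ is tangent to the $\alpha$-surface of lines through $c(\gamma)$, so the neighbouring line $\gamma_t$ obtained by flowing along $A$ inside $\mathcal{H}$ still meets $\gamma$ at $c(\gamma)$ to first order; but $\gamma_t$ also contains its own centre $c(\gamma)+t\,dc(A)+O(t^2)$, and two points on the nearly-constant line $\gamma_t$ can differ only in the direction $\vec{U}$. Hence $dc(A)\parallel\vec{U}$, and since $dc(N)=0$ the entire behaviour of $dc$ is governed by whether $dc(A)$ vanishes.

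If $dc(A)\ne 0$ then $\vec{U}=dc(A)$ lies in the image of $dc$, which is therefore at least one-dimensional; it cannot be exactly one-dimensional, for then every line would be tangent to the focal curve and $\mathcal{H}$ would be the two-parameter family of tangent lines to a curve, contradicting $\dim\mathcal{H}=3$. So $dc$ has rank two, $F=S$ is a surface, and $\vec{U}\in\operatorname{im}dc=T_{c(\gamma)}S$ says exactly that $\gamma$ is tangent to $S$ at $c(\gamma)$; thus $\mathcal{H}$ is an open subset of the (three-dimensional, automatically null) family of oriented lines tangent to $S$, giving $\mathcal{H}_2$. If instead $dc$ vanishes on the whole $\alpha$-plane, then $\mathcal{H}$ is foliated by $\alpha$-surfaces of constant centre, the leaf space is one-dimensional, and $F$ is at most a curve: finite centres sweep out a curve $C$ and force $\mathcal{H}=\{\text{oriented lines meeting }C\}=\mathcal{H}_1$, while centres at infinity make the $\alpha$-surfaces fibres of $\pi$ and produce $\mathcal{H}_0$, the lines parallel to a fixed plane. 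In each case a dimension count upgrades the inclusion $\mathcal{H}\subseteq\mathcal{H}_n$ to equality on the relevant component.

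I expect the chord lemma, together with the identification of the null geodesics as flat pencils, to be the main obstacle: once the rigidity $dc(A)\parallel\vec{U}$ is in hand the trichotomy falls out from linear algebra and a dimension count, but it is precisely this step that exploits the neutral geometry, namely the splitting of the tangent space into $\alpha$- and $\beta$-planes and the pencil structure it induces. A secondary difficulty is the degenerate case $n=0$, where the focal set escapes to the plane at infinity and one must use regularity to see that the admissible directions form a great circle, so that the family is genuinely the set of oriented lines parallel to a plane.
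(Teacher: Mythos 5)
The paper states Proposition \ref{p:nullity} without any proof, so there is no argument of record to compare yours against; what follows is an assessment of your proposal on its own terms. Your strategy --- introduce the centre map $c$, show it is constant along the null generators by identifying them with pencils (the intersections of the $\alpha$- and $\beta$-surfaces), prove the chord lemma $dc(A)\parallel\vec{U}$, and read the trichotomy off the rank of $dc$ --- is the natural one: it is the classical focal-surface analysis of line congruences transplanted into the neutral-metric framework, and the case analysis is sound. The chord lemma works as you describe: $c(\gamma_t)$ lies on $\gamma_t$ exactly, $\gamma_t$ passes within $O(t^2)$ of $c(\gamma)$ because $A$ is tangent to the star of lines through $c(\gamma)$, and two points on a line differing by $O(t)$ can only differ in the direction $\vec{U}+O(t)$. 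Two steps you should make explicit rather than assert: that the integral curves of $N$ are unparametrized geodesics of ${\mathbb G}$ (the standard computation ${\mathbb G}(\nabla_N N,X)=0$ for all $X$ tangent to ${\mathcal H}$), hence null lines in the conformally flat coordinates and therefore pencils; and that ``regular'' is being read as giving $dc$ constant rank, since your trichotomy is a pointwise rank statement promoted to a global one.

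The one genuine soft spot is the case where the centres escape to infinity. Any hypersurface of the form $\pi^{-1}(\delta)$ for an embedded curve $\delta\subset{\mathbb S}^2$ contains the totally null fibre planes in its tangent spaces and is therefore automatically null, with every centre at infinity and empty focal set in ${\mathbb R}^3$; neither nullity nor regularity as literally defined (focal set a submanifold of ${\mathbb R}^3$) forces $\delta$ to be a great circle. So your appeal to regularity to ``see that the admissible directions form a great circle'' does not go through as stated: either the focal set must be taken in the projective compactification, where $\pi^{-1}(\delta)$ becomes an instance of ${\mathcal H}_1$ with focal curve at infinity and only the great-circle case is genuinely degenerate, or the statement needs an additional hypothesis. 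This is arguably an imprecision in the proposition itself --- the focal set of ${\mathcal H}_0$ is not literally a $0$-dimensional submanifold of ${\mathbb R}^3$ --- but your write-up should confront the point rather than defer it.
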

\vspace{0.1in}
Assuming the fixed curve and fixed surface are convex, we have ${\mathcal H}_0={\mathcal H}_2=S^1\times {\mathbb R}^2$ and ${\mathcal H}_1=S^2\times {\mathbb R}$. The null cone of a point $\gamma\in{\mathbb L}$ is clearly an example of null hypersurface ${\mathcal H}_1$, the fixed curve being the line $\gamma\subset{\mathbb R}^3$.

On the other hand, the formula presented in Section \ref{s:3.3} assumes data on a null hypersurface ${\mathcal H}_0$. Both the $\alpha-$ and $\beta-$planes in ${\mathcal H}_0$ are integrable, so it can be foliated by $\alpha-$surfaces (all the oriented lines in a fixed direction) and by $\beta-$surfaces (all oriented lines contained in a plane parallel to the fixed plane). 

The $\alpha$-foliation underpins the projection operator in the formula and it is not clear how the formula would look for data on null hypersurfaces of type ${\mathcal H}_1$ or ${\mathcal H}_2$, as the $\alpha-$planes are not in general integrable.

\begin{center}
    \includegraphics[scale=0.90]{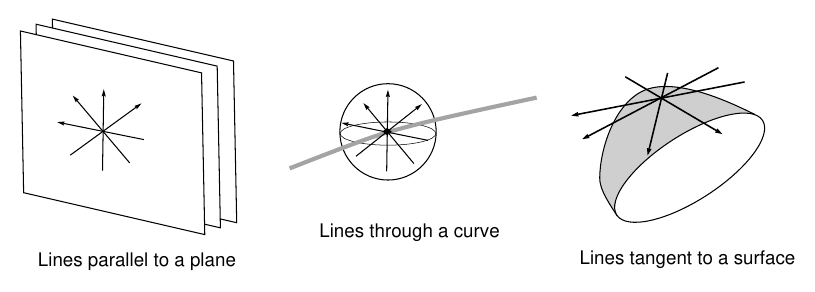}
\captionof{figure}{Regular null hypersurfaces in oriented line space}\label{f:null}
\end{center}

In Figure \ref{f:null} the three types of regular null hypersurfaces ${\mathcal H}_0, {\mathcal H}_1, {\mathcal H}_2$ are shown. The left null hypersurface is ${\mathcal H}_0$,  the standard configuration for acquiring data in CT scans, and is discussed in Section \ref{s:3.3}. 

Reconstruction using either of the other two null hypersurfaces would have advantages if one seeks to reduce the amount of radiation exposure during the scan. In particular, using the oriented lines ${\mathcal H}_1$ through a fixed line would reduce the exposure of each point to a semi-circle of radiation rather than the full circle in the ${\mathcal H}_0$. On the other hand, using the oriented lines ${\mathcal H}_2$  tangent to a convex surface would leave the interior occluded, and hence shielded completely from radiation. Whether either of these two configurations can be practically acquired by a physical scanner is another matter.

\vspace{0.1in}

%%%%%%%%%%%%%%%%%%%%%%%%%%%%%%%%%%%%%%%%%%%%%%%%%%%%%%%%%%%%%%%%%%%%%%%%%%%%%%%%%%%%%%%%%%%%%
\subsection{Paracomplex Structures}\label{s:2.2}

The complex structure $J_0$ on the space of oriented geodesics of a 3-dimensional space form evaluated at an oriented geodesic is obtained by rotation through 90$^o$ about the geodesic. This almost complex structure is integrable in the sense of Nijinhuis, which for any almost complex structure $J$ says
\[
N_{ij}^k=J^m_j\partial_mJ^k_i-J^m_i\partial_mJ^k_j+J_m^k(\partial_iJ_j^m-\partial_jJ^m_i)=0,
\]
and thus a complex structure. This is due to the fact that the ambient space has constant curvature \cite{Hitchin82}. 

One can also take reflection of an oriented line in a fixed oriented line $\gamma\in{\mathbb L}$ to generate a map $J_1:T_\gamma {\mathbb L}\rightarrow T_\gamma {\mathbb L}$ such that $J_1^2=1$ and the $\pm1-$eigenspaces are 2-dimensional. This {\em almost paracomplex structure} is not integrable in the sense of Nijinhuis and thus not a {\em paracomplex structure}. It is however anti-isometric with respect to the canonical neutral metric ${\mathbb G}$:
\[
{\mathbb G}(J_1\cdot,J_1\cdot)=-{\mathbb G}(\cdot,\cdot).
\]

\vspace{0.1in}
\begin{Thm}\cite{GG22b}\label{t:1}
The space of oriented lines of Euclidean 3-space admits an invariant commuting triple $(J_0,J_1,J_2)$ of a complex structure, an almost paracomplex structure and an almost complex structure, respectively, satisfying $J_2=J_0J_1$.
The complex structure $J_0$ is isometric, while $J_1$ and $J_2$ are anti-isometric. Only $J_0$ is parallel w.r.t. ${\mathbb G}$, and only $J_0$ is integrable. 

Composing the neutral metric ${\mathbb G}$ with the (para)complex structures $J_0,J_1,J_2$ yields closed 2-forms $\Omega_0$ and $\Omega_1$, and a conformally flat, scalar flat, neutral metric  $\tilde{\mathbb G}$, respectively. The neutral 4-manifolds $({\mathbb L},{\mathbb G})$ and $({\mathbb L},\tilde{\mathbb G})$ are isometric. Only $J_0$ is parallel w.r.t. $\tilde{\mathbb G}$.
\end{Thm}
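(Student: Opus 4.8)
The plan is to treat the three structures pointwise and algebraically first, and to defer all curvature computations to the very end, so that the global statements fall out of essential uniqueness rather than brute force. First I would dispose of invariance and the algebra of the triple. Each of $J_0$ (rotation through $90^o$ about $\gamma$) and $J_1$ (reflection in $\gamma$) is built from an operation sharing the oriented axis $\gamma$, and every Euclidean motion carries this configuration to the corresponding one at the image line; together with the invariance of ${\mathbb G}$ from Theorem \ref{t:gk} this makes the whole triple, and hence $\Omega_0,\Omega_1,\tilde{\mathbb G}$, automatically Euclidean invariant. Since $J_0$ and $J_1$ rotate and reflect about the \emph{same} axis, they commute on $T_\gamma{\mathbb L}$; combined with $J_0^2=-1$ and $J_1^2=+1$ this gives $J_2:=J_0J_1=J_1J_0$ with $J_2^2=J_0^2J_1^2=-1$, so $J_2$ is an almost complex structure and $(J_0,J_1,J_2)$ is a commuting triple. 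I would verify the commutation explicitly in the complex coordinates $(\xi,\eta)$ of \cite{GK05}, where the action of each structure is recorded.

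Next come the metric relations, which I would extract purely formally. That $J_0$ is isometric and $\nabla$-parallel is the K\"ahler content of Theorem \ref{t:gk}, and ${\mathbb G}(J_1\cdot,J_1\cdot)=-{\mathbb G}(\cdot,\cdot)$ is noted above; then ${\mathbb G}(J_2\cdot,J_2\cdot)={\mathbb G}(J_1\cdot,J_1\cdot)=-{\mathbb G}(\cdot,\cdot)$ shows $J_2$ is anti-isometric. Replacing the second slot by $J_k\cdot$ in these identities yields the adjoint relations: $J_0$ and $J_1$ are ${\mathbb G}$-skew, so $\Omega_0(\cdot,\cdot)={\mathbb G}(J_0\cdot,\cdot)$ and $\Omega_1(\cdot,\cdot)={\mathbb G}(J_1\cdot,\cdot)$ are genuine $2$-forms, and combining these shows $J_2$ is ${\mathbb G}$-self-adjoint, so $\tilde{\mathbb G}(\cdot,\cdot)={\mathbb G}(J_2\cdot,\cdot)$ is symmetric. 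Nondegeneracy of $\tilde{\mathbb G}$ follows from invertibility of $J_2$, and since $\tilde{\mathbb G}(J_2\cdot,J_2\cdot)=-\tilde{\mathbb G}(\cdot,\cdot)$ the structure $J_2$ pairs a maximal positive subspace with a negative one, forcing neutral signature $(++--)$. Here $\Omega_0=\omega$ is closed by Theorem \ref{t:gk}; closedness of $\Omega_1$ I would check directly in the coordinates $(\xi,\eta)$, equivalently by producing a local potential.

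A useful observation streamlines the $\tilde{\mathbb G}$-statements: using $J_0J_2=J_2J_0$ together with the ${\mathbb G}$-isometry of $J_0$ one gets $\tilde{\mathbb G}(J_0\cdot,J_0\cdot)=\tilde{\mathbb G}(\cdot,\cdot)$ and $\tilde{\mathbb G}(J_0\cdot,\cdot)=-\Omega_1$. Thus $J_0$ is $\tilde{\mathbb G}$-isometric, integrable by Theorem \ref{t:gk}, and its fundamental form is the closed $\Omega_1$; hence $({\mathbb L},\tilde{\mathbb G},J_0)$ is K\"ahler and $J_0$ is $\tilde\nabla$-parallel. For the negative assertions I would compute $\nabla J_1$ and the Nijenhuis tensor of $J_1$, and of $J_2$, in coordinates and exhibit a single nonvanishing component, showing that $J_1$ and $J_2$ are neither parallel nor integrable for ${\mathbb G}$, and likewise that $\tilde\nabla J_1,\tilde\nabla J_2\neq0$.

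The main obstacle is the remaining geometry of $\tilde{\mathbb G}$: conformal flatness, scalar flatness, and the isometry $({\mathbb L},{\mathbb G})\cong({\mathbb L},\tilde{\mathbb G})$. The cleanest route I would take is to note that $\tilde{\mathbb G}$ is an invariant neutral metric on ${\mathbb L}$ and to invoke the essential uniqueness of such metrics \cite{Salvai05,AGK11}: up to homothety and the discrete symmetries of ${\mathbb L}$ it must coincide with ${\mathbb G}$, which is conformally and scalar flat by Theorem \ref{t:gk} and which simultaneously supplies the isometry. The delicate point is ensuring that $\tilde{\mathbb G}$ genuinely lands in the same invariant class rather than a distinct one; failing a clean appeal to the classification, I would instead compute the Weyl and scalar curvature of $\tilde{\mathbb G}$ directly in the $(\xi,\eta)$ coordinates and construct the isometry explicitly from the geometric meaning of $J_2$ as the composition of the axis rotation and reflection.
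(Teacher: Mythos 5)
This theorem is stated in the survey as an imported result from \cite{GG22b} with no proof supplied, so there is no internal argument to compare yours against; what follows assesses your proposal on its own terms. The purely formal part is sound and efficiently organised: anti-isometry together with $J_1^2=+1$ gives ${\mathbb G}$-skewness of $J_1$, anti-isometry together with $J_2^2=-1$ gives ${\mathbb G}$-self-adjointness of $J_2$, so $\Omega_1$ is a genuine $2$-form and $\tilde{\mathbb G}$ a symmetric nondegenerate tensor of neutral signature, and your identity $\tilde{\mathbb G}(J_0\cdot,\cdot)=-\Omega_1$ neatly reduces the $\tilde{\mathbb G}$-parallelism of $J_0$ to closedness of $\Omega_1$ plus integrability of $J_0$. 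But two of your steps would not survive being carried out as described. First, the heuristic ``rotation and reflection about the same axis commute'' is unsafe, because the literal reflection of ${\mathbb R}^3$ in the line $\gamma$ is the rotation by $\pi$ about $\gamma$, whose derivative at the fixed point $\gamma\in{\mathbb L}$ is $-\mathrm{id}=J_0^2$ --- not an almost paracomplex structure at all. The $J_1$ of the theorem must be pinned down through the decomposition of the Jacobi field along $\gamma$ (equivalently the horizontal/vertical splitting of $T{\mathbb S}^2$), and only then can one check that its eigenplanes are $2$-dimensional and that it commutes rather than anticommutes with $J_0$; anticommutation would give $J_2^2=+1$ and destroy the statement. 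The coordinate verification you defer is therefore not a formality but the step carrying the content, and the same applies to closedness of $\Omega_1$ and to the non-parallelism and non-integrability of $J_1,J_2$.

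Second, the appeal to essential uniqueness \cite{Salvai05,AGK11} cannot deliver conformal flatness, scalar flatness, or the isometry $({\mathbb L},{\mathbb G})\cong({\mathbb L},\tilde{\mathbb G})$. That classification produces a multi-parameter family of Euclidean-invariant pseudo-Riemannian metrics on ${\mathbb L}$, and ${\mathbb G}$ and $\tilde{\mathbb G}$ are distinct members of it, so invariance alone cannot identify one with the other. Moreover no Euclidean motion $\phi$ can realise the isometry, since $\phi^*\tilde{\mathbb G}=\tilde{\mathbb G}\neq{\mathbb G}$; the isometry has to be exhibited explicitly as a diffeomorphism outside the Euclidean group, after which the curvature statements for $\tilde{\mathbb G}$ transport from Theorem \ref{t:gk}. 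In other words, the fallback you mention in your last sentence --- direct computation of the curvature of $\tilde{\mathbb G}$ in the $(\xi,\eta)$ coordinates and an explicit construction of the isometry from the geometric meaning of $J_2$ --- is not a contingency plan but the proof itself.
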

\vspace{0.1in}

An almost paracomplex structure is an example of an {\em almost product structure}, in which a splitting of the tangent space at each point of the manifold is given, in this case $4=2+2$. Such pointwise splittings can only be extended over a manifold subject to certain geometric and topological conditions. For example

\vspace{0.1in}
\begin{Thm}\cite{GG22b}\label{t:2}
A  conformally flat neutral metric on a 4-manifold that admits a parallel anti-isometric or isometric almost paracomplex structure has zero scalar curvature.
\end{Thm}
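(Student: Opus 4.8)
The plan is to exploit the single most useful consequence of having a \emph{parallel} structure, namely that the curvature operator must commute with it, and then to feed this into the rigid algebraic form the curvature tensor is forced to take once the metric is conformally flat. Write $\nabla$ for the Levi-Civita connection of $\mathbb{G}$ and $R$ for its curvature. Since $\nabla J=0$, the Ricci identity applied to the $(1,1)$-tensor $J$ gives, for all vector fields $X,Y$,
\[
[R(X,Y),J]=0,
\]
so that $R(X,Y)$ commutes with $J$ as an endomorphism of each tangent space. Pairing this with a fourth vector $W$ and using the $(\text{anti-})$isometry of $J$ should convert it into a symmetry of the $(0,4)$ curvature tensor $R(X,Y,Z,W)=\mathbb{G}(R(X,Y)Z,W)$.

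More precisely, the first thing I would record is that $J^2=\mathrm{id}$ together with $\mathbb{G}(JX,JY)=\epsilon\,\mathbb{G}(X,Y)$, with $\epsilon=+1$ in the isometric and $\epsilon=-1$ in the anti-isometric case, makes $J$ self-adjoint when $\epsilon=+1$ and anti-self-adjoint when $\epsilon=-1$, i.e. $\mathbb{G}(JA,W)=\epsilon\,\mathbb{G}(A,JW)$. The commutation relation then reads
\[
R(X,Y,JZ,W)=\epsilon\,R(X,Y,Z,JW).
\]
Evaluating on eigenvectors $Z,W$ of $J$ with $JZ=\lambda Z$, $JW=\mu W$ and $\lambda,\mu\in\{\pm1\}$ yields $(\lambda-\epsilon\mu)R(X,Y,Z,W)=0$, whence
\[
R(X,Y,Z,W)=0 \quad\text{whenever}\quad \lambda\neq\epsilon\mu .
\]
Thus $R$ vanishes on same-eigenspace pairs in the anti-isometric case and on mixed-eigenspace pairs in the isometric case; by the pair symmetry $R(X,Y,Z,W)=R(Z,W,X,Y)$ the same holds in the first two slots. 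Geometrically this just says that the curvature operator respects the $2+2$ splitting of $\Lambda^2$ induced by $J$.

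Next I would choose a pseudo-orthonormal frame $\{e_1,e_2,e_3,e_4\}$ adapted to the eigenspaces $V_+=\langle e_1,e_2\rangle$ and $V_-=\langle e_3,e_4\rangle$ and substitute the vanishing components into the conformally flat curvature. In dimension four the vanishing of the Weyl tensor forces
\[
R_{ijkl}=\tfrac12\!\left(g_{ik}\rho_{jl}+g_{jl}\rho_{ik}-g_{il}\rho_{jk}-g_{jk}\rho_{il}\right)-\tfrac{s}{6}\!\left(g_{ik}g_{jl}-g_{il}g_{jk}\right),
\]
where $\rho$ is the Ricci tensor and $s$ the scalar curvature. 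Imposing the vanishing of the appropriate components (for the anti-isometric case $R_{3412}=0$, using the null frame in which $V_\pm$ are totally null; for the isometric case $R_{1313}=R_{1414}=R_{2323}=R_{2424}=0$) produces a small linear system relating the diagonal Ricci entries to $s$. Comparing this with the trace identity $s=\mathbb{G}^{ij}\rho_{ij}$ over the same frame collapses, in every configuration, to an equation of the shape $s=\tfrac{2}{3}s$, forcing $s=0$. The signs of the entries $g_{aa}$ differ between the two signature arrangements of $V_\pm$ and between $\epsilon=+1$ and $\epsilon=-1$, but the resulting identity is uniform.

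The frame bookkeeping is routine; the step that carries the whole argument is the passage from $\nabla J=0$ to the forced vanishing of a genuinely off-trace block of the conformally flat curvature, since it is precisely the mismatch between the factor $\tfrac12$ multiplying $\rho$ and the factor $\tfrac16$ multiplying $s$ in the Weyl-free curvature that refuses to balance unless $s=0$. The hard part will be checking that in \emph{each} admissible signature of $V_+$ and $V_-$ at least one nondegenerate component constraint survives, so that the linear system is not vacuous; this is where I would be most careful, but the null (anti-isometric) and orthogonal (isometric) cases each supply enough independent equations to pin down $s$.
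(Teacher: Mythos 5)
Your argument is correct and it closes. One caveat on the comparison you were asked for: the survey states this theorem without proof, citing \cite{GG22b}, so there is no in-paper argument to measure yours against; judged on its own terms, the chain $\nabla J=0\Rightarrow[R(X,Y),J]=0\Rightarrow R(X,Y,JZ,W)=\epsilon\,R(X,Y,Z,JW)$ is sound, and the eigenvector evaluation correctly kills the mixed block when $\epsilon=+1$ and the same-eigenspace block when $\epsilon=-1$. The point you flagged as needing care does survive in every admissible configuration. In the anti-isometric case the eigenplanes are forced to be totally null and a single component already suffices: in a null frame with $g_{13}=g_{24}=1$ and all other components zero, the Weyl-free formula gives $R_{3412}=\tfrac12(\rho_{13}+\rho_{24})-\tfrac{s}{6}$ while the trace is $s=2(\rho_{13}+\rho_{24})$, so $R_{3412}=s/12$ and its forced vanishing yields $s=0$ outright. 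In the isometric case the eigenplanes are orthogonal and nondegenerate; with $V_\pm$ of signatures $(++)$ and $(--)$ your four constraints give $\rho_{33}-\rho_{11}=\rho_{44}-\rho_{11}=\rho_{33}-\rho_{22}=\rho_{44}-\rho_{22}=-s/3$ and the trace collapses to $s=\tfrac23 s$; in the remaining arrangement where each eigenplane has signature $(+-)$ the constraints $R_{1212}=R_{1414}=R_{3232}=R_{3434}=0$ read $\rho_{11}+\rho_{22}=s/3$ and $\rho_{33}+\rho_{44}=-s/3$, and the trace again gives $s=\tfrac23 s$. Two cosmetic remarks only: the pair symmetry $R(X,Y,Z,W)=R(Z,W,X,Y)$ you invoke is true but not actually needed for these computations, and in the anti-isometric case the final identity is $s/12=0$ rather than $s=\tfrac23 s$.
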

\vspace{0.1in}

The parallel condition for an isometric almost paracomplex structure can be expressed in terms of the first order invariants of the eigenplane distributions:
\vspace{0.1in}
\begin{Thm}\cite{GG22b}\label{t:3}
Let $j$ be an isometric almost paracomplex structure on a pseudo-Riemannian 4-manifold. Then $j$ is parallel iff the eigenplane distributions are tangent to a pair of mutually orthogonal foliations by totally geodesic surfaces.
\end{Thm}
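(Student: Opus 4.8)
The plan is to work entirely with the two eigenplane distributions $D_{\pm}=\{X:jX=\pm X\}$ and their pointwise $g$-orthogonal projections $P_{\pm}=\tfrac12(\mathrm{Id}\pm j)$, so that $j=P_+-P_-$. First I would record two structural facts that come for free from the hypotheses. Since $j$ is isometric and $j^2=\mathrm{Id}$, any $X\in D_+$ and $Y\in D_-$ satisfy $g(X,Y)=g(jX,jY)=-g(X,Y)$, whence $g(X,Y)=0$; thus $D_+\perp D_-$ and the ``mutually orthogonal'' clause is automatic. Because $TM=D_+\oplus D_-$ is then a $g$-orthogonal splitting of a non-degenerate metric, each restriction $g|_{D_+}$ and $g|_{D_-}$ is itself non-degenerate (a null vector in one summand would be $g$-orthogonal to the other by the splitting, hence to all of $TM$). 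This non-degeneracy is the fact I will lean on most.

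The next step is a one-line tensorial computation identifying what parallelism says. For $W\in D_+$ one has $(\nabla_Z j)W=\nabla_Z W-j\nabla_Z W=2P_-(\nabla_Z W)$, and for $W\in D_-$ one gets $(\nabla_Z j)W=-2P_+(\nabla_Z W)$. Hence $\nabla j=0$ is \emph{equivalent} to both distributions being $\nabla$-invariant in every direction: $\nabla_Z W\in D_\pm$ for all $Z$ whenever $W\in D_\pm$. Restricting $Z$ to lie in the same distribution as $W$ then gives $\nabla_X Y\in D_+$ for $X,Y\in D_+$, and likewise for $D_-$; since $[X,Y]=\nabla_XY-\nabla_YX$, each distribution is involutive, hence integrable by Frobenius, with totally geodesic $2$-dimensional leaves. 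This settles the forward implication.

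For the converse I must recover full $\nabla$-invariance from the a priori weaker totally geodesic hypothesis, which only controls $\nabla_Z W$ when $Z$ and $W$ are tangent to the \emph{same} leaf; the task is to pin down the mixed derivatives. I would argue as follows: take $U\in D_-$ and $Y\in D_+$, and for arbitrary $V\in D_-$ use metric compatibility to write $g(\nabla_U Y,V)=U\,g(Y,V)-g(Y,\nabla_U V)=-g(Y,\nabla_U V)$. Total geodesy of $D_-$ forces $\nabla_U V\in D_-$, which is $g$-orthogonal to $Y\in D_+$, so the pairing vanishes; as $g|_{D_-}$ is non-degenerate, $P_-(\nabla_U Y)=0$, i.e.\ $\nabla_U Y\in D_+$. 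The symmetric computation, with the roles of $D_+$ and $D_-$ interchanged, handles $\nabla_X V$ for $X\in D_+$, $V\in D_-$. Together with the same-distribution case this yields $\nabla_Z W\in D_\pm$ for \emph{all} $Z$, which by the first computation is exactly $\nabla j=0$.

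The main obstacle is precisely this bridging step: ``totally geodesic'' constrains only derivatives taken \emph{along} the distribution, whereas $\nabla j=0$ demands invariance under $\nabla_Z$ for arbitrary $Z$. The mechanism that closes the gap is that one needs \emph{both} eigenplane foliations to be totally geodesic at once — the mixed derivative $\nabla_U Y$ is controlled only by pairing it against the \emph{other} distribution and invoking that distribution's total geodesy, after which non-degeneracy of the induced metrics turns the vanishing pairings into the desired splitting. Neither foliation on its own suffices, which is exactly why the statement couples the two.
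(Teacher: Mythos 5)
Your argument is correct and complete. Note that the survey itself gives no proof of this statement --- it is quoted from \cite{GG22b} --- so there is nothing in the paper to compare against line by line; but your route is the natural one and, as far as the logic goes, I see no gap. The two points where such an argument usually founders are both handled properly: you derive the orthogonality $D_+\perp D_-$ and, crucially, the non-degeneracy of $g|_{D_\pm}$ from the isometric condition before invoking second fundamental forms (for a degenerate leaf ``totally geodesic'' would not be equivalent to $P_\mp(\nabla_XY)=0$ and the duality step $g(\nabla_UY,V)=0\ \forall V\in D_-\Rightarrow P_-(\nabla_UY)=0$ would fail); and you correctly isolate the real content of the converse, namely upgrading control of $\nabla_ZW$ for $Z$ tangent to the same leaf as $W$ to control for arbitrary $Z$, via metric compatibility and the total geodesy of the \emph{other} foliation. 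The tensoriality of $\nabla j$ in its argument, which lets you check parallelism only on sections of $D_\pm$, is used implicitly but harmlessly.
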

\vspace{0.1in}

Canonical examples for neutral conformally flat metrics are the indefinite product of two surfaces of equal constant Gauss curvature, which have exactly this double foliation. It is instructive in this case to use the isometric paracomplex structure $j=I\oplus-I$ to flip the sign of the product metric. The result is a Riemannian metric which turns out to be Einstein. This construction holds more generally:

\vspace{0.1in}
\begin{Thm}\cite{GG22b}\label{t:4}
Let $(M,g)$ be a Riemannian $4$-manifold endowed with a parallel isometric paracomplex structure $j$, and let the associated neutral metric be $g'(\cdot,\cdot)=g(j\cdot,\cdot)$. Then, $g'$ is locally conformally flat if and only if $g$ is Einstein.
\end{Thm}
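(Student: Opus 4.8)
The plan is to exploit the fact that $g$ and $g'$ share the same Levi-Civita connection, and then to reduce everything to a product of surfaces. Since $j$ is parallel and $g$-isometric with $j^2=\mathrm{id}$, the projections $\tfrac12(\mathrm{id}\pm j)$ are parallel, so the two eigendistributions $T^{\pm}$ (the $\pm1$-eigenspaces of $j$) are parallel, mutually orthogonal and $2$-dimensional. By Theorem \ref{t:3} they are tangent to a pair of orthogonal totally geodesic foliations, and hence by the local de Rham decomposition theorem $(M,g)$ is locally a Riemannian product $(M_1,g_1)\times(M_2,g_2)$ of two surfaces, with $j=\mathrm{id}\oplus(-\mathrm{id})$ in this splitting. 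Consequently
\[
g'(\cdot,\cdot)=g(j\cdot,\cdot)=g_1\oplus(-g_2),
\]
the neutral product metric. Moreover, since $\nabla g=0$ together with $\nabla j=0$ gives $\nabla g'=0$ for the torsion-free connection $\nabla=\nabla^{g}$, this $\nabla$ is also the Levi-Civita connection of $g'$; thus $g$ and $g'$ carry the same $(1,3)$ Riemann tensor $R$ and the same $(0,2)$ Ricci tensor $\rho$, the product structure forcing $\rho=K_1 g_1\oplus K_2 g_2$, where $K_i$ denotes the Gauss curvature of $(M_i,g_i)$.

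The computational heart of the argument is then to evaluate the Weyl tensor $W'$ of the neutral product $g'$ in an adapted frame $\{e_1,e_2\}\subset T^{+}$, $\{e_3,e_4\}\subset T^{-}$. Because $R$ is the product curvature, the only nonzero components of the $(0,4)$-tensor obtained by lowering with $g'$ are $R'_{1212}=K_1$ and $R'_{3434}=-K_2$; substituting into the dimension-four Weyl formula (with $R'_{\mathrm{scal}}=2(K_1-K_2)$) shows that every component of $W'$ is a fixed multiple of $K_1-K_2$. For instance the mixed component is $W'_{1313}=\tfrac16(K_1-K_2)$ and the pure components are $W'_{1212}=W'_{3434}=\tfrac13(K_1-K_2)$, while all remaining components vanish identically. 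Since $\dim M=4$, local conformal flatness of $g'$ is equivalent to $W'\equiv 0$, which by the above is equivalent to $K_1\equiv K_2$ as functions on $M_1\times M_2$.

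Finally I would close the equivalence with a separation-of-variables observation: $K_1$ depends only on the $M_1$-factor and $K_2$ only on the $M_2$-factor, so the identity $K_1\equiv K_2$ on the product forces both to equal a common constant $\lambda$. But then $\rho=K_1 g_1\oplus K_2 g_2=\lambda(g_1\oplus g_2)=\lambda g$ is exactly the Einstein condition, and conversely $g$ Einstein yields $K_1=K_2=\lambda$ and hence $W'\equiv 0$. The main obstacle is the careful signature bookkeeping in the Weyl computation: one must track the sign flip coming from $g'_{33}=g'_{44}=-1$ when forming the Ricci and scalar curvatures, and verify that no independent component of $W'$ imposes a condition stronger than $K_1=K_2$. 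Once this is confirmed, the apparent gap between the \emph{pointwise} condition $K_1=K_2$ and the \emph{Einstein} condition $K_1=K_2=\mathrm{const}$ dissolves, precisely because the two Gauss curvatures live on different factors of the product.
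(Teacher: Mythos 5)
Your argument is correct, and it follows exactly the route the paper indicates: the survey itself states Theorem \ref{t:4} without proof (deferring to \cite{GG22b}), but the surrounding discussion --- Theorem \ref{t:3} on the totally geodesic eigenfoliations and the remark about indefinite products of surfaces of equal constant Gauss curvature --- is precisely your reduction via the local de Rham splitting to $g'=g_1\oplus(-g_2)$ followed by the Weyl computation. Your component values $W'_{1212}=W'_{3434}=\tfrac13(K_1-K_2)$ and $W'_{1313}=\tfrac16(K_1-K_2)$ check out, and the separation-of-variables step correctly closes the gap between $K_1\equiv K_2$ and the Einstein condition.
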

\vspace{0.1in}
This transformation will be used in Section \ref{s:4.3} to find global topological obstructions to parallel isometric paracomplex structures.

\vspace{0.1in}
%%%%%%%%%%%%%%%%%%%%%%%%%%%%%%%%%%%%%%%%%%%%%%%%%%%%%%%%%%%%%%%%%%%%%%%%%%%%%%%%%%%%%%%%%%%%%
\subsection{The Space of Oriented Geodesics of Hyperbolic 3-Space}\label{s:2.3}

In this section we consider the space ${\mathbb L}({\mathbb H}^3)$ of oriented geodesics in  three dimensional hyperbolic space ${\mathbb H}^3$ of constant sectional curvature $-1$. The canonical neutral metric on this space has been considered in detail \cite{GG10a} \cite{GG10b} \cite{Salvai07}, but its relation to the ultrahyperbolic equation has not.  To illustrate the ideas of this paper, and explore the commonality with the flat case, proofs are provided in this section. 

The space ${\mathbb L}({\mathbb H}^3)$ of oriented geodesics in hyperbolic 3-space is diffeomorphic to that of oriented lines ${\mathbb L}({\mathbb R}^3)$ in Euclidean 3-space ${\mathbb L}({\mathbb H}^3)={\mathbb L}({\mathbb R}^3)=T{\mathbb S}^2$, but the projection map does not have the same geometric significance. In fact each oriented geodesic has \textit{two} Gauss maps (the beginning and end directions at the boundary of the ball model for ${\mathbb H}^3$) and there is a natural embedding into $S^2\times S^2$. Thus it is natural to view  ${\mathbb L}({\mathbb H}^3)$ as $S^2\times S^2$ with the diagonal removed or, more geometrically, the {\it reflected} diagonal removed \cite{GG10b}.

The canonical neutral metric $\tilde{\mathbb G}$ on ${\mathbb L}({\mathbb H}^3)$ is conformally flat and scalar flat, thus relating the solutions of the flat ultrahyperbolic equation with harmonic functions, as in the case of ${\mathbb L}({\mathbb R}^3)$. 

\vspace{0.1in}
\begin{Thm}\label{p:lh3}
For any compactly supported or asymptotically constant function $f$ on hyperbolic 3-space, its X-ray transform is harmonic with respect to the canonical neutral metric:
\[
\triangle_{\tilde{\mathbb G}}u_f=0,
\]
where $\triangle_{\tilde{\mathbb G}}$ is the Laplacian of $\tilde{\mathbb G}$.
\end{Thm}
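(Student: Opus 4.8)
The plan is to convert the intrinsic harmonicity condition into the flat ultrahyperbolic equation, exploiting that $\tilde{\mathbb{G}}$ is conformally flat and scalar flat, and then to verify that equation by differentiating the X-ray integral directly --- in close parallel with John's treatment of the Euclidean case, which this section is meant to mirror.

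First I would coordinatise $\mathbb{L}(\mathbb{H}^3)=S^2\times S^2\setminus\bar\Delta$ by the two ideal endpoints of an oriented geodesic, write the canonical metric $\tilde{\mathbb{G}}$ in these complex coordinates, and record a positive conformal factor $\Omega$ with $\tilde{\mathbb{G}}=\Omega^2(dX_1^2+dX_2^2-dX_3^2-dX_4^2)$ in conformal coordinates \cite{GG10b}. Since $\tilde{\mathbb{G}}$ is scalar flat, the conformal covariance of the four-dimensional Yamabe operator collapses to
\[
\triangle_{\tilde{\mathbb{G}}}\,u=\Omega^{-3}\,\square(\Omega u),\qquad \square:=\frac{\partial^2}{\partial X_1^2}+\frac{\partial^2}{\partial X_2^2}-\frac{\partial^2}{\partial X_3^2}-\frac{\partial^2}{\partial X_4^2},
\]
for every $u$ --- the mechanism already recorded in Section \ref{s:2} by which a conformal multiple of a harmonic function solves (\ref{e:uhe}). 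The theorem thus reduces to the single claim $\square(\Omega u_f)=0$, with the ad hoc multiplicative factor in John's calculation now identified as a power of $\Omega$.

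Second, I would differentiate $u_f(\gamma)=\int_\gamma f\,dr$ under the integral sign in the endpoint coordinates. Varying one endpoint moves $\gamma$ through a one-parameter family whose infinitesimal behaviour is governed by Jacobi fields, and the resulting first- and second-variation formulas yield a hyperbolic analogue of John's consistency relations. The assertion of the theorem is that, once the conformal factor is inserted, $\triangle_{\tilde{\mathbb{G}}}u_f$ is precisely the combination of these second derivatives that vanishes identically --- equivalently, the two families of endpoint variations align with the $\alpha$- and $\beta$-null directions of $\tilde{\mathbb{G}}$, so that the $(+\,+\,-\,-)$ signature engineers the cancellation. The hypotheses that $f$ be compactly supported or asymptotically constant enter here to secure convergence of the appropriately normalised integral, to license differentiation under the integral sign, and to annihilate the boundary contributions at the ideal endpoints.

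The principal obstacle is exactly this cancellation: extracting the correct variation formulas and verifying that $\triangle_{\tilde{\mathbb{G}}}u_f$ is their vanishing combination while faithfully tracking $\Omega$. The geometry that makes the outcome inevitable is the incidence structure behind the transform: the oriented geodesics through a fixed point $p\in\mathbb{H}^3$ form an $\alpha$-surface $\Sigma_p$, a totally null holomorphic Lagrangian surface for $\tilde{\mathbb{G}}$, and $u_f=\int_{\mathbb{H}^3}f(p)\,u_{\delta_p}\,dV(p)$ superposes the elementary data attached to these null surfaces. Since the two null congruences of $\tilde{\mathbb{G}}$ encode precisely the two independent ways an infinitesimal geodesic can meet a point, the split-signature Laplacian is the natural second-order operator killing every such superposition --- which is why the theorem holds, just as its Euclidean prototype does.
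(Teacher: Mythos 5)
Your overall strategy is reasonable and shares the paper's skeleton --- differentiate the X-ray integral under the integral sign and use the decay of $f$ to kill boundary terms at the ideal endpoints --- but the proposal stops exactly where the proof has to begin. You write that ``the principal obstacle is exactly this cancellation: extracting the correct variation formulas and verifying that $\triangle_{\tilde{\mathbb G}}u_f$ is their vanishing combination,'' and then you substitute for that verification a heuristic about $\alpha$-surfaces, Jacobi fields and the incidence geometry of null directions. That heuristic explains why one should \emph{expect} the theorem to hold (it is essentially the same reason the Euclidean case works), but it is not an argument: nothing in the proposal establishes that the specific second-order operator $\triangle_{\tilde{\mathbb G}}$, applied to $u_f$, actually collapses to an integral of an exact derivative along each geodesic. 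The superposition $u_f=\int f(p)\,u_{\delta_p}\,dV(p)$ does not help here either, since $u_{\delta_p}$ is a distribution supported on the null surface $\Sigma_p$ and showing that $\triangle_{\tilde{\mathbb G}}$ annihilates it is no easier than the original claim.

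For comparison, the paper's proof does not pass through the conformal reduction to the flat operator at all. It works in the upper half-space model, writes the metric in the adapted coordinates $(\xi,\eta)$ where
\[
d{s}^2=-\tfrac{i}{4}\left(\tfrac{1}{\xi^2}d\xi^2-\tfrac{1}{\bar{\xi}^2}d\bar{\xi}^2+\bar{\xi}^2d\eta^2-\xi^2d\bar{\eta}^2\right),
\]
computes the Laplacian explicitly, expresses $\partial/\partial r$ along the unit-speed geodesic in terms of $\partial_z,\partial_{\bar z},\partial_t$, and then verifies the identity
\[
\triangle_{\tilde{\mathbb G}}u_f=4i\int_{-\infty}^\infty \frac{\partial }{\partial r}\left(\frac{1}{\bar{\xi}}\partial_z f-\frac{1}{{\xi}}\partial_{\bar{z}} f\right)dr,
\]
after which the fundamental theorem of calculus and the fall-off hypothesis finish the argument. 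That displayed identity is the entire content of the theorem, and it is precisely the step your proposal leaves as an acknowledged obstacle. Your conformal-covariance reduction $\triangle_{\tilde{\mathbb G}}u=\Omega^{-3}\square(\Omega u)$ is correct in principle (given scalar flatness on both sides), but routing the computation through the endpoint coordinates $(\mu_1,\mu_2)$ and the explicit $\Omega$ of Proposition \ref{p:concoo} would make the required identity considerably messier than the direct computation the paper performs. To complete your proof you would need to actually produce the hyperbolic analogue of John's consistency relations in your chosen coordinates and exhibit the total-derivative structure; until that is done there is no proof.
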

\begin{proof}
Consider the upper half-space model of hyperbolic 3-space ${\mathbb H}^3$, that is $(x_1,x_2,x_3)\in{\mathbb R}^3, x_3\in{\mathbb R}_{>0}$ with metric
\[
ds^2=\frac{dx_1^2+dx_2^2+dx_3^2}{x
_3^2}.
\]

We can locally model the space of oriented geodesics in this model by $(\xi,\eta)\in{\mathbb C}^2$ where the unit parameterised geodesic is \cite{GG10b}
\begin{equation}\label{e:uhsp}
z=x_1+ix_2=\eta+\frac{\tanh r}{\bar{\xi}} \qquad\qquad x_3=\frac{1}{|\xi|\cosh r}.
\end{equation}
With respect to these coordinates the neutral metric is
\[
d{s}^2=-\frac{i}{4}\left(\frac{1}{\xi^2}d\xi^2-\frac{1}{\bar{\xi}^2}d\bar{\xi}^2+\bar{\xi}^2d\eta^2-\xi^2d\bar{\eta}^2\right),
\]
and the Laplacian is 
\[
\triangle_{\tilde{\mathbb G}}u=8{\mbox Im }\left(\frac{1}{\bar{\xi}^2}\partial^2_\eta u+\partial_\xi(\xi^2\partial_\xi u)\right).
\]
Note that
\[
\frac{\partial}{\partial r}=\frac{1}{\cosh^2r}\left(\frac{1}{\bar{\xi}}\frac{\partial}{\partial z}+\frac{1}{{\xi}}\frac{\partial}{\partial \bar{z}}-\frac{\sinh r}{|{\xi}|}\frac{\partial}{\partial t}\right).
\]
Now a straight-forward calculation establishes the following identity
\[
\triangle_{\tilde{\mathbb G}}u_f=4i\int_{-\infty}^\infty \frac{\partial }{\partial r}\left(\frac{1}{\bar{\xi}}\partial_z f-\frac{1}{{\xi}}\partial_{\bar{z}} f\right)dr=4i\left[\frac{1}{\bar{\xi}}\partial_z f-\frac{1}{{\xi}}\partial_{\bar{z}} f\right]_{-\infty}^\infty .
\]
Thus, by integration by parts, as long as the transverse gradient of $f$ falls off at the boundary faster than $|\xi|$, the boundary terms vanish and we get 
\[
\triangle_{\tilde{\mathbb G}}u_f=0.
\]
\end{proof}
\vspace{0.1in}

In Section \ref{s:3.1} unit (pseudo-)circles in flat planes are proven to be the domains of integration of a mean value theorem for solutions of the ultrahyperbolic equation and to generate doubly ruled surfaces in the underlying ${\mathbb R}^3$. We now present a local conformally flat coordinate system for ${\mathbb L}({\mathbb H}^3)$ using the hyperboloid model of hyperbolic 3-space ${\mathbb H}^3$, which lets one explicitly construct such doubly ruled surfaces in ${\mathbb H}^3$.

In the hyperboloid model in Minkowski space ${\mathbb R}^{3+1}$, ${\mathbb H}^3$ is the hyperboloid $x_0^2-x_1^2-x_2^2-x_3^2=1$ and the oriented geodesics are the intersections with oriented planes of Lorentz signature through the origin in ${\mathbb R}^{3+1}$. 

An oriented geodesic in ${\mathbb H}^3$ in the ball model can be uniquely determined by the directions at the boundary $(\mu_1,\mu_2)\in{\mathbb S}^2\times{\mathbb S}^2$. 
These directions $(\mu_1,\mu_2)$ are exactly the null directions on the Lorentz plane. 

The relationships between the complex coordinates $(\mu_1,\mu_2)\in{\mathbb C}^2$ obtained by stereographic projection on each ${\mathbb S}^2$ factor and the complex coordinates $(\xi,\eta)$ introduced in Theorem \ref{p:lh3} is
\[
\xi={\textstyle{\frac{1}{2}}}\left(\bar{\mu}_1+{\textstyle{\frac{1}{{\mu}_2}}}\right)^{-1}\qquad\qquad \eta={\textstyle{\frac{1}{2}}}\left(-{\mu}_1+{\textstyle{\frac{1}{\bar{\mu}_2}}}\right).
\]

\vspace{0.1in}
\begin{Prop}\label{p:concoo}
If $(\mu_1,\mu_2)$ are the standard holomorphic coordinates on ${\mathbb L}({\mathbb H}^3)$, consider the complex combination
\[
Z_1=\frac{(1+\mu_2\bar{\mu}_2)\bar{\mu}_1+(1+\mu_1\bar{\mu}_1)\bar{\mu}_2+i[(1-\mu_2\bar{\mu}_2)\bar{\mu}_1-(1-\mu_1\bar{\mu}_1)\bar{\mu}_2]}{1-\mu_1\bar{\mu}_1\mu_2\bar{\mu}_2}
\]
\[
Z_2=\frac{(1+\mu_2\bar{\mu}_2)\bar{\mu}_1+(1+\mu_1\bar{\mu}_1)\bar{\mu}_2-i[(1-\mu_2\bar{\mu}_2)\bar{\mu}_1-(1-\mu_1\bar{\mu}_1)\bar{\mu}_2]}{1-\mu_1\bar{\mu}_1\mu_2\bar{\mu}_2}.
\]
The flat neutral metric $ds^2=dZ_1d\bar{Z}_1-dZ_2d\bar{Z}_2$ pulled back by the above is equal to $\Omega^2 \tilde{\mathbb G}$ where
\[
\Omega=\frac{|1+\mu_1\bar{\mu}_2|^2}{1-|\mu_1|^2|\mu_2|^2}.
\]
The inverse mapping from $(\mu_1,\mu_2)$ to $(Z_1,Z_2)$ is given by
\begin{equation}\label{e:conflh3a}
\mu_1={\textstyle{\frac{1}{2}}}(\bar{A}+\bar{B})-\frac{\bar{A}-\bar{B}}{2|A-B|^2}\left(|A|^2-|B|^2+2- \sqrt{(|A|^2-|B|^2+2)^2-|A-B|^2|A+B|^2}\right)
\end{equation}
\begin{equation}\label{e:conflh3b}
\mu_2={\textstyle{\frac{1}{2}}}(\bar{A}-\bar{B})-\frac{(\bar{A}+\bar{B})}{2|A+B|^2}\left(|A|^2-|B|^2+2- \sqrt{(|A|^2-|B|^2+2)^2-|A-B|^2|A+B|^2}\right)
\end{equation}
where $A={\textstyle{\frac{1}{2}}}(Z_1+Z_2)$ and $B={\textstyle{\frac{1}{2i}}}(Z_1-Z_2)$.
\end{Prop}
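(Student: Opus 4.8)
The plan is to read the statement as three linked verifications --- that $(Z_1,Z_2)$ form conformal coordinates, that the conformal factor is the stated $\Omega$, and that the displayed inverse is correct --- and to organise all three around the real-bilinear combinations $A=\tfrac12(Z_1+Z_2)$ and $B=\tfrac1{2i}(Z_1-Z_2)$, so that $Z_1=A+iB$ and $Z_2=A-iB$. Substituting these into $ds^2=dZ_1d\bar Z_1-dZ_2d\bar Z_2$ collapses the flat neutral metric to the compact shape
\[
ds^2=2i\,(dB\,d\bar A-dA\,d\bar B),
\]
which is the form I would actually pull back. Writing $w=1+\mu_1\bar\mu_2$ and $D=1-|\mu_1|^2|\mu_2|^2$, adding and subtracting the numerators of $Z_1$ and $Z_2$ immediately factorises the map as
\[
A+B=\frac{2\bar\mu_1 w}{D},\qquad A-B=\frac{2\bar\mu_2\bar w}{D},
\]
and these two identities reduce every subsequent quantity to monomials in $\mu_1,\mu_2,w,D$.

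For the conformal statement I would transport $\tilde{\mathbb G}$ into the $(\mu_1,\mu_2)$ chart. The stated relation between $(\xi,\eta)$ and $(\mu_1,\mu_2)$ simplifies to $\xi=\mu_2/(2\bar w)$ and, on inverting, $w=2/(1+4\bar\xi\eta)$, so that $d\xi$ and $d\eta$ can be written in terms of $d\mu_1,d\mu_2$ and their conjugates; inserting these into the $(\xi,\eta)$-form of $\tilde{\mathbb G}$ given in Theorem~\ref{p:lh3} produces $\tilde{\mathbb G}$ in the $\mu$-chart. Independently, differentiating the two factorisations yields $dA$ and $dB$, and assembling $2i(dB\,d\bar A-dA\,d\bar B)$ gives the pulled-back flat metric in the same basis. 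Comparing the two expressions coefficient by coefficient in $d\mu_1^2,\,d\mu_2^2,\,d\mu_1d\bar\mu_1,\dots$ should exhibit the single common factor $\Omega^2$.

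The value of $\Omega$ and the inversion then come out of one discriminant computation, which is the cleanest ingredient. With $s=|\mu_1|^2|\mu_2|^2$, a direct expansion of the factorisations gives $|A|^2-|B|^2+2=2(1+s)|w|^2/D^2$ and $|A+B|^2|A-B|^2=16s|w|^4/D^4$, whence
\[
\bigl(|A|^2-|B|^2+2\bigr)^2-|A+B|^2|A-B|^2=\frac{4|w|^4}{D^4}\bigl((1+s)^2-4s\bigr)=\frac{4|w|^4}{D^2}.
\]
Because $D>0$ on the relevant region, the square root is the perfect square $2|w|^2/D=2\Omega$, which at once confirms $\Omega=|1+\mu_1\bar\mu_2|^2/(1-|\mu_1|^2|\mu_2|^2)$ and delivers the clean identity $|A|^2-|B|^2+2-\sqrt{\,\cdot\,}=4s|w|^2/D^2$. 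Feeding this into the claimed formula for $\mu_1$ and using the factorisations reduces the right-hand side to $\mu_1(\bar w-w\bar\mu_1\mu_2)/D$; since $\bar w-w\bar\mu_1\mu_2=1-|\mu_1|^2|\mu_2|^2=D$, it collapses to $\mu_1$, and the $\mu_2$-formula follows by the mirror computation.

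The genuine obstacle is the metric pullback of the second paragraph. Because the passage between $(\xi,\eta)$ and $(\mu_1,\mu_2)$ is not holomorphic --- $\xi$ depends on $\bar\mu_1$ and $\mu_2$ while $\eta$ depends on $\mu_1$ and $\bar\mu_2$ --- all four differentials intervene, and the numerous cross terms must cancel in exactly the pattern that rebuilds the $d\xi^2,\,d\bar\xi^2,\,\bar\xi^2d\eta^2,\,\xi^2d\bar\eta^2$ skeleton of $\tilde{\mathbb G}$. Organising that bookkeeping, and fixing the branch of the square root through the constraint $D>0$, are the only delicate points; everything downstream is the monomial algebra already set up by the factorisations.
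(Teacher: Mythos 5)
The paper's own proof is simply ``a direct calculation,'' and your proposal is exactly that calculation, organised efficiently around the factorisations $A+B=2\bar{\mu}_1w/D$ and $A-B=2\bar{\mu}_2\bar{w}/D$ with $w=1+\mu_1\bar{\mu}_2$, $D=1-|\mu_1|^2|\mu_2|^2$; I have checked these factorisations, the discriminant identity $(|A|^2-|B|^2+2)^2-|A+B|^2|A-B|^2=4|w|^4/D^2$, and the collapse of the inversion formulas to $\mu_1$ and $\mu_2$ via $\bar{w}-\bar{\mu}_1\mu_2 w=D$, and they are all correct. The one step you leave as bookkeeping --- the coefficient-by-coefficient comparison of $2i(dB\,d\bar{A}-dA\,d\bar{B})$ with $\Omega^2\tilde{\mathbb G}$ --- is no less detailed than what the paper itself provides, so the two approaches coincide.
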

\begin{proof}
A direct calculation.
\end{proof}
In Section \ref{s:3.2} these transformations will be used to construct surfaces in ${\mathbb H}^3$ that are ruled by geodesics in two distinct ways - doubly ruled surfaces.

\vspace{0.1in}
%%%%%%%%%%%%%%%%%%%%%%%%%%%%%%%%%%%%%%%%%%%%%%%%%%%%%%%%%%%%%%%%%%%%%%%%%%%%%%%%%%%%%%%%%%%%%
\section{The Ultrahyperbolic Equation}\label{s:3}
%%%%%%%%%%%%%%%%%%%%%%%%%%%%%%%%%%%%%%%%%%%%%%%%%%%%%%%%%%%%%%%%%%%%%%%%%%%%%%%%%%%%%%%%%%%%%

In this section solutions of the ultrahyperbolic equation (\ref{e:uhe}) are studied. A mean value property for such solutions is presented along with its interpretation in terms of doubly ruled surfaces in ${\mathbb R}^3$. Classically it was known that a non-flat doubly ruled surface in ${\mathbb R}^3$ is either a one-sheeted hyperboloid or a hyperbolic paraboloid \cite{hcv}. The construction of doubly ruled surfaces is extended to hyperbolic 3-space and the analogue of the 1-sheeted hyperboloid is exhibited. An explicit geometric formula is then given for the ultrahyperbolic equation with data given on a certain null hypersurface.

\vspace{0.1in}
\subsection{Mean Value Theorem}\label{s:3.1}

The X-ray transform takes a function $f:{\mathbb  R}^3\rightarrow{\mathbb R}$ to $u_f:{\mathbb L}\rightarrow{\mathbb R}$ by integrating over lines. In 1937 Fritz John showed that if a function $f$ satisfies certain fall-off conditions at infinity (which hold for compactly supported functions), then $u_f$ satisfies the ultrahyperbolic equation (\ref{e:uhe}), \cite{fjohn}. 

The link between the ultrahyperbolic equation (\ref{e:uhe}) and the neutral metric is

\vspace{0.1in}
\begin{Thm}\cite{CG22a}
Let $u:{\mathbb R}^{2,2}\rightarrow{\mathbb R}$ and $v:{\mathbb L}\rightarrow{\mathbb R}$ be related by $v=\Omega^{-1} u$, where $\Omega$ is the conformal factor.

Then $u$ is a solution of the ultrahyperbolic equation (\ref{e:uhe}) iff $v$ is in the kernel of the Laplacian of the neutral metric: $\Delta_{\mathbb G} v=0$.
\end{Thm}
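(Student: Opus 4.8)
The plan is to treat the claimed equivalence as an instance of the conformal covariance of the conformal (Yamabe) Laplacian, exploiting that both metrics in play are scalar flat. By conformal flatness of $\mathbb{G}$ (Theorem \ref{t:gk}) together with the coordinate description (\ref{e:cflcoords}), I would write $\mathbb{G}=\Omega^2 g_0$, where $g_0=dX_1^2+dX_2^2-dX_3^2-dX_4^2$ is the flat neutral metric on ${\mathbb R}^{2,2}$ and $\Omega>0$ is the conformal factor. The first observation is that the ultrahyperbolic equation (\ref{e:uhe}) is nothing but $\Delta_{g_0}u=0$, since $\Delta_{g_0}$ is the flat neutral Laplacian $\partial_{X_1}^2+\partial_{X_2}^2-\partial_{X_3}^2-\partial_{X_4}^2$.

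The key algebraic input is the transformation law for the conformal Laplacian. In dimension $n$ the operator
\[
L_g=\Delta_g-\frac{n-2}{4(n-1)}R_g,
\]
with $R_g$ the scalar curvature, satisfies, under a conformal change $\hat{g}=\Omega^2 g$, the covariance identity
\[
L_{\hat{g}}(\phi)=\Omega^{-\frac{n+2}{2}}\,L_g\!\left(\Omega^{\frac{n-2}{2}}\phi\right)
\]
for every function $\phi$. I would verify that this identity, whose derivation uses only the conformal transformation of the Christoffel symbols and of the scalar curvature, is insensitive to the signature of $g$ and therefore holds verbatim for neutral metrics. Specializing to $n=4$ gives $L_g=\Delta_g-\tfrac16 R_g$ and the clean relation $L_{\hat{g}}(\phi)=\Omega^{-3}L_g(\Omega\,\phi)$; note that the exponent $\tfrac{n-2}{2}=1$ is precisely what produces the weight appearing in $v=\Omega^{-1}u$, so this step is special to dimension four.

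The final step is to eliminate the curvature terms using scalar flatness. The flat metric $g_0$ has $R_{g_0}=0$, while $\mathbb{G}$ is scalar flat by Theorem \ref{t:gk}, so $L_{g_0}=\Delta_{g_0}$ and $L_{\mathbb{G}}=\Delta_{\mathbb{G}}$. The covariance identity then collapses to
\[
\Delta_{\mathbb{G}}(v)=\Omega^{-3}\,\Delta_{g_0}(\Omega\,v)=\Omega^{-3}\,\Delta_{g_0}(u),
\]
after substituting $u=\Omega v$. Since $\Omega$ is strictly positive the factor $\Omega^{-3}$ never vanishes, so $\Delta_{\mathbb{G}}v=0$ if and only if $\Delta_{g_0}u=0$, which is precisely the ultrahyperbolic equation (\ref{e:uhe}).

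I expect the only genuine obstacle to be confirming that the conformal covariance of the Yamabe operator carries over unchanged to neutral signature; this is a purely tensorial identity in which no positivity of the metric is used, but it deserves an explicit check. Everything else is bookkeeping: the two appearances of scalar flatness (one built into $g_0$, one supplied by Theorem \ref{t:gk}) are exactly what reduce the conformally covariant operator to the bare Laplacian on each side, and the dimension-four arithmetic is what matches the conformal weight to the stated relation $v=\Omega^{-1}u$.
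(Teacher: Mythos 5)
Your argument is correct: the conformal covariance of the conformal Laplacian $L_{\hat g}(\phi)=\Omega^{-3}L_g(\Omega\phi)$ in dimension four is a purely tensorial identity, valid in any signature, and the two instances of scalar flatness (of $g_0$ and of ${\mathbb G}$, the latter from Theorem \ref{t:gk}) reduce it to exactly the claimed equivalence with the weight $v=\Omega^{-1}u$. The paper itself states this theorem as a citation to \cite{CG22a} without proof, but its surrounding remarks (conformal flatness, scalar flatness, the ``Yamabe-type problem'' and the reference to Lee--Parker) point to precisely the mechanism you use, so your proof supplies the intended argument rather than a genuinely different route.
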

\vspace{0.1in}

Leifur Asgeirsson \cite{LA} had earlier shown that solutions of the ultrahyperbolic equation satisfy a mean value property. In particular, for $u:{\mathbb R}^{2,2}\rightarrow{\mathbb R}$ a solution of equation (\ref{e:uhe}) satisfies

\begin{equation} \label{asgeirsson}
\int_0^{2\pi} u(a+r\cos\theta, b+r\sin\theta, c, d) \ d\theta
= 
\int_0^{2\pi} u(a,b,c+r\cos\theta, d+r\sin\theta) \ d\theta,
\end{equation}
for all $a,b,c,d\in\mathbb R$ and $r>0$. The two domains of integration are circles of equal radius lying in a pair of orthogonal planes $\pi,\pi^\perp$ in ${\mathbb R}^{2,2}$ with definite induced metrics on them. 

It can be shown that the mean value theorem holds over a much larger class of curves, namely the image of these circles under any conformal map of  ${\mathbb R}^{2,2}$. We refer to such curves as {\em conjugate conics} and these turn out to be pairs of circles, hyperbolae and parabolae lying in orthogonal planes of various signatures:

\vspace{0.1in}
\begin{Thm} \cite{CG22a} \cite{CG22b}
Let $S$ and $S^\perp$ be curves contained in orthogonal affine planes $\pi$ and $\pi^\perp$ in ${\mathbb R}^{2,2}$, respectively, which are one of the following pairs:
\begin{enumerate}
\item \emph{Circles} with equal and opposite radii $\pm r_0$ when the two planes are definite,
\item \emph{Hyperbolae} with equal and opposite radii $\pm r_0$ when the two planes are indefinite,
\item \emph{Parabolae} in non-intersecting degenerate affine planes determined by the property that every point on $S\subset\pi$ is null separated from every point on $S^\perp\subset\pi^\perp$. 
\end{enumerate}
Then the following mean value property holds for any solution $u$ of the ultrahyperbolic equation:
\[
\int_S u \ dl = \int_{S^\perp} u \ dl,
\]
where $dl$ is the line element induced on the curves  by the flat metric $g$.
\end{Thm}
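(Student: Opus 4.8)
The plan is to derive the full statement from the single base case of Asgeirsson's identity \eqref{asgeirsson} --- the mean value property over a pair of equal-radius circles in orthogonal definite planes --- by exploiting the conformal invariance of the ultrahyperbolic equation. Two ingredients must be established: first, that the class of solutions of \eqref{e:uhe} is preserved, up to a conformal weight, by the conformal group of $\mathbb{R}^{2,2}$; and second, that the arc-length mean value integral transforms under a conformal map with exactly the reciprocal weight, so that the two factors cancel and the equality transports.

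For the first ingredient I would combine the preceding theorem of \cite{CG22a} with the standard conformal transformation law for the conformal Laplacian in dimension four. If $\phi$ is a conformal diffeomorphism of $\mathbb{R}^{2,2}$ with $\phi^*g=\lambda^2 g$, then $\phi^* g$ is again flat, hence scalar-flat, so its conformal Laplacian coincides with its Laplacian; the dimension-four covariance $\Delta_{\lambda^2 g}(\lambda^{-1}w)=\lambda^{-3}\Delta_g w$ together with the naturality relation $\phi^*\circ\Delta_g=\Delta_{\phi^*g}\circ\phi^*$ then shows that $u_0:=\lambda\cdot(u\circ\phi)$ solves \eqref{e:uhe} whenever $u$ does. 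For the second ingredient, parametrizing a curve $C$ by $c(t)$ and using $g(d\phi(X),d\phi(Y))=\lambda^2 g(X,Y)$, the induced arc-length elements satisfy $dl_{\phi(C)}=(\lambda\circ c)\,dl_C$, whence $\int_{\phi(C)} u\,dl=\int_C \lambda(u\circ\phi)\,dl=\int_C u_0\,dl$. Applying Asgeirsson's identity to $u_0$ over the base circles $C,C^\perp$ and transporting both sides by $\phi$ then yields $\int_S u\,dl=\int_{S^\perp} u\,dl$ for $S=\phi(C)$, $S^\perp=\phi(C^\perp)$.

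The main obstacle --- and the geometric heart of the statement --- is the classification of the conformal orbit of the base configuration, i.e.\ showing that the images $\phi(C),\phi(C^\perp)$ range over precisely the three listed families. The organizing principle is the conformally invariant characterization of Asgeirsson's circle pairs: a direct computation gives $Q(p,q)=r_0^2-r_0^2=0$ for every $p\in C$ and $q\in C^\perp$, so that every point of one curve is null-separated from every point of the other. Since null separation, the orthogonality of the carrier planes, and the conic nature of the curves are all conformal invariants, the orbit stays inside configurations of this type; the work is to run the converse --- using inversions and special conformal transformations to realize circles in definite planes, hyperbolae in indefinite planes, and parabolae in degenerate planes --- and to verify that the stated normalizations (``equal and opposite radii $\pm r_0$'', and null separation of all point pairs) single out exactly this orbit.

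I expect the parabola case to require the most care. There the carrier planes are degenerate, so the induced metric on each plane is itself degenerate, and one must confirm that the tangent to $S$ stays non-null so that $dl$ remains a genuine line element and the two ingredients above still apply without modification. Once the orbit classification is in hand, the analytic transport argument is uniform across all three cases, and the theorem follows.
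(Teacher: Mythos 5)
Your proposal matches the approach the paper itself indicates: the survey gives no proof of this theorem (it is quoted from \cite{CG22a} and \cite{CG22b}), but it explicitly describes the conjugate conics as ``the image of these circles under any conformal map of ${\mathbb R}^{2,2}$,'' which is exactly your strategy of transporting Asgeirsson's identity by conformal covariance of the equation (with weight $\lambda$) cancelling against the arc-length weight ($dl_{\phi(C)}=\lambda\, dl_C$). Your conformal-weight computation in dimension four is correct, and your identification of mutual null separation as the invariant organizing the orbit classification agrees with the paper's own remark that every point of one conic is null-separated from every point of its conjugate.
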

\vspace{0.1in}
One can view this as a conformal extension of the original mean value theorem, one that intertwines the classical conic sections, the ultrahyperbolic equation and neutral geometry.

\vspace{0.1in}
%%%%%%%%%%%%%%%%%%%%%%%%%%%%%%%%%%%%%%%%%%%%%%%%%%%%%%%%%%%%%%%%%%%%%%%%%%%%%%%%%%%%%%%%%%%%%
\subsection{Doubly Ruled Surfaces}\label{s:3.2}

John also pointed out the relationship between the two circles in Asgeirsson's theorem and the double ruling of the hyperboloid of 1 sheet \cite{fjohn}. In fact, conjugate conics have been shown to correspond to the pairs of families of lines of all non-planar doubly ruled surfaces in ${\mathbb R}^3$.

\vspace{0.1in}
\begin{Thm} \cite{CG22b}\label{rulsurf}
Let $S,S^\perp$ be two curves in $\mathbb R^{2,2}$ representing the two one-parameter families of lines $L,L^\perp$ in ${\mathbb R}^3$. Then $S,S^\perp$ are a pair of conjugate conics in $\mathbb R^{2,2}$ if and only if $L$ and $L^\perp$ are the two families of generating lines of a non-planar doubly ruled surface in $3$-space.
\end{Thm}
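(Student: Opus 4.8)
The plan is to translate the statement entirely into the language of null separation in ${\mathbb R}^{2,2}$ and then match it against the classical projective geometry of reguli. The starting point is the dictionary already set up above: a curve in ${\mathbb L}$ is literally a one-parameter family of oriented lines in ${\mathbb R}^3$, and so sweeps out a ruled surface. By the interpretation of the neutral distance function $Q$, two oriented lines are null-separated, $Q(\gamma,\tilde\gamma)=0$, precisely when they intersect or are parallel. The defining incidence property of the two rulings $L,L^\perp$ of a doubly ruled surface is that every line of one family meets (or, in the projective completion, is parallel to) every line of the other, while two lines of the same family are skew. Thus the condition that $L$ and $L^\perp$ be the two rulings of a common surface is captured, together with a non-degeneracy requirement, by the condition that \emph{every point of $S$ is null-separated from every point of $S^\perp$}. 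This mutual null-separation condition is the hinge of the whole argument.

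For the direction ($\Leftarrow$), suppose $L,L^\perp$ are the two rulings of a non-planar doubly ruled surface. By the classical classification \cite{hcv} such a surface is, up to a Euclidean motion, either the standard one-sheeted hyperboloid or the standard hyperbolic paraboloid. The Euclidean group acts on ${\mathbb L}$ by isometries of ${\mathbb G}$ (Theorem \ref{t:gk}), hence as conformal transformations of the flat structure on ${\mathbb R}^{2,2}$, and the class of conjugate conics is by construction conformally invariant. It therefore suffices to treat the two normal forms. I would parametrize each ruling explicitly, push it into ${\mathbb L}$ using the conformal coordinates of Proposition \ref{p:conf}, and verify by direct computation that the two image curves are non-degenerate conics lying in orthogonal affine planes, that is, a pair of conjugate conics. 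For the one-sheeted hyperboloid in standard position this recovers John's original observation \cite{fjohn} that the two rulings correspond to Asgeirsson's pair of circles, while the hyperbolic paraboloid yields the degenerate-plane (parabola) case.

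For the direction ($\Rightarrow$), I would first record that conjugate conics automatically satisfy mutual null separation: a direct computation on the standard pair of circles gives $Q\equiv 0$ between them, and since every conjugate conic is a conformal image of this standard pair and nullity is a conformal invariant, $Q$ vanishes identically between $S$ and $S^\perp$ in general. Hence every line of $L$ meets or is parallel to every line of $L^\perp$. I then invoke the classical regulus theorem: three pairwise skew lines in projective $3$-space lie on a unique ruled quadric, whose complementary regulus is exactly the set of lines meeting all three. Choosing three lines of $L^\perp$ (three points of $S^\perp$) and showing they are mutually skew, every line of $L$ meets all three and so lies on the complementary regulus of the associated quadric $\Sigma$; symmetrically $L^\perp$ lies on the other regulus, so $\Sigma$ is doubly ruled with $L,L^\perp$ among its generators, and non-planarity is automatic once $\Sigma$ is a genuine quadric.

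The main obstacle is precisely the non-degeneracy input needed in that last step: I must guarantee that the three representative lines of $L^\perp$ really are mutually skew, so that $\Sigma$ is a smooth ruled quadric rather than a degenerate locus such as a cone, a pair of planes, or a pencil of concurrent lines. Skewness of $\gamma,\gamma'$ is equivalent to $Q(\gamma,\gamma')\neq 0$, i.e. to the chords of $S^\perp$ being non-null, and the degenerate configurations --- lines through a common point or lines in a common plane --- are exactly the totally null $\alpha$- and $\beta$-surfaces of the neutral metric, on which every curve is null. It is here that the full strength of the conjugate-conic hypothesis enters: because $S,S^\perp$ are \emph{non-degenerate} conics sitting in orthogonal planes of definite, indefinite or degenerate signature, their chord structure excludes these null loci, forcing the family to be neither concurrent nor coplanar and hence $\Sigma$ to be an honest non-planar doubly ruled surface, which completes the equivalence.
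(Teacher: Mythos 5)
Your central mechanism --- that $Q(\gamma,\tilde\gamma)=0$ encodes incidence (or parallelism) of the underlying lines, so that mutual null separation of $S$ and $S^\perp$ is exactly the statement that every line of $L$ meets every line of $L^\perp$ --- is precisely the ``geometric reason'' the survey itself offers immediately after Theorem \ref{rulsurf} (the full proof being deferred to \cite{CG22b}). Your completion of the forward direction via the classical regulus theorem, with pairwise skewness of three representative lines of $L^\perp$ extracted from the non-nullity of chords of a nondegenerate conic in a definite, indefinite or degenerate plane, is a reasonable way to finish that half, and it correctly isolates where nondegeneracy of the conics is actually used.

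The converse direction as written contains a genuine gap. The classification of \cite{hcv} says a non-planar doubly ruled surface is a one-sheeted hyperboloid or a hyperbolic paraboloid, but only up to \emph{affine} (indeed projective) equivalence: the surfaces $x^2/a^2+y^2/b^2-z^2/c^2=1$ form a multi-parameter family of mutually non-congruent surfaces, so it is false that a Euclidean motion reduces every case to ``the standard'' normal form. Since the only conformal invariance you have established is that of the Euclidean group acting on ${\mathbb L}$ via Theorem \ref{t:gk}, verifying the claim for two model surfaces does not cover the general case. The repair is either to carry out the coordinate computation for the general quadric, or --- more in the spirit of the subject --- to invoke the Klein--Pl\"ucker correspondence: the flat conformal structure on line space used by John \cite{fjohn} is the restriction of that of the Klein quadric, on which all of $PGL(4,{\mathbb R})$ acts conformally, so affine images of the model hyperboloid still have rulings represented by conjugate conics. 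That projective invariance is an additional idea absent from your write-up. Two smaller points to tidy: the chart of Proposition \ref{p:conf} covers only $|\xi|<1$, while the rulings of $z=xy$ have directions on and below the equator; and in the forward direction you should justify that $L$ exhausts its regulus rather than a proper subfamily (for the circle case this follows from compactness of $S$).
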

\vspace{0.1in}

The geometric reason these curves yield a doubly ruled surface is that every point on one curve is zero distance from every point on the other curve - this follows from the neutral Pythagoras Theorem! But, as mentioned earlier, zero distance between oriented lines implies intersection, we see that every line of one ruled surface intersects every line of the other ruling, hence the double ruling. 

While this result was originally proven in ${\mathbb R}^3$, it holds in any 3-dimensional space of constant curvature, where the canonical neutral K\"ahler metric plays the same role. To demonstrate this, let us construct doubly ruled surfaces in 3-dimensional hyperbolic space ${\mathbb H}^3$. 

Recall the conformal coordinates for ${\mathbb L}({\mathbb H}^3)$ given in equations (\ref{e:conflh3a}) and (\ref{e:conflh3b}). To generate the hyperbolic equivalent of the 1-sheeted hyperboloid, the two curves (parameterized by $u$) are circles of radii $\pm r_0$ in two definite planes:
\[
Z_1=r_0e^{iu}\qquad\qquad Z_2=0,
\]
and
\[
Z_1=0\qquad\qquad Z_2=r_0e^{iu}.
\]

For the curves we can view the doubly ruled surfaces in either the upper half-space model or the ball model of ${\mathbb H}^3$. For the former, one uses the equations (\ref{e:uhsp}), while for the latter one can use
\[
x_1+ix_2=\frac{\mu_2(1+\mu_1\bar{\mu}_1)e^v-\mu_1(1+\mu_2\bar{\mu}_2)e^{-v}}{(1+\mu_1\bar{\mu}_1)(1+\mu_2\bar{\mu}_2)\cosh v+\left[(1+\mu_1\bar{\mu}_2)(1+\mu_2\bar{\mu}_1)(1+\mu_1\bar{\mu}_1)(1+\mu_2\bar{\mu}_2)\right]^{\scriptstyle{\frac{1}{2}}}}
\]
\[
x_3=\frac{(1+\mu_1\bar{\mu}_1)(1-\mu_2\bar{\mu}_2)e^v-(1+\mu_2\bar{\mu}_2)(1-\mu_1\bar{\mu}_1)e^{-v}}{2\left((1+\mu_1\bar{\mu}_1)(1+\mu_2\bar{\mu}_2)\cosh v+\left[(1+\mu_1\bar{\mu}_2)(1+\mu_2\bar{\mu}_1)(1+\mu_1\bar{\mu}_1)(1+\mu_2\bar{\mu}_2)\right]^{\scriptstyle{\frac{1}{2}}}\right)}.
\]
Figure 1 is a plot of a doubly ruled surface in the upper half-space model while Figure 2 is in the ball model of hyperbolic 3-space. These are the hyperbolic equivalent of the 1-sheeted hyperboloid, although they satisfy a fourth order (rather than second order) polynomial equation.

\begin{figure}
    \centering
    \begin{minipage}{0.45\textwidth}
        \centering
        \includegraphics[width=1.2\textwidth]{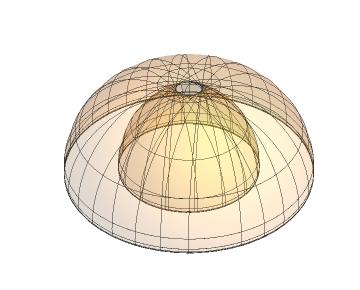} % first figure itself
        \caption{Halfspace model}
    \end{minipage}\hfill
    \begin{minipage}{0.45\textwidth}
        \centering
        \includegraphics[width=0.9\textwidth]{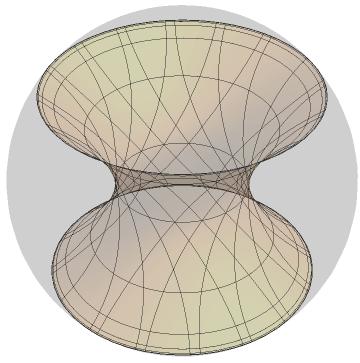} % second figure itself
        \caption{Ball model}
    \end{minipage}
\end{figure}

\vspace{0.1in}
%%%%%%%%%%%%%%%%%%%%%%%%%%%%%%%%%%%%%%%%%%%%%%%%%%%%%%%%%%%%%%%%%%%%%%%%%%%%%%%%%%%%%%%%%%%%%
\subsection{Cauchy Problem for the Ultrahyperbolic Equation}\label{s:3.3}

One way to reconcile the difference between the dimension of ${\mathbb L}({\mathbb R}^3)$ and that of ${\mathbb R}^3$ is to consider the problem of determining the value of a solution $v:{\mathbb L}\rightarrow{\mathbb R}$ of the Laplace equation
\[
\triangle_{\mathbb G}v=0,
\]
on all of oriented line space ${\mathbb L}$, given only the values of the function on a null hypersurface ${\mathcal H}\subset {\mathbb L}$.

Consider the case where the data is known on the hypersurface generated by all oriented lines parallel to a fixed plane in $P_0\subset{\mathbb R}^3$ - the case of regular dimension zero focal set ${\mathcal H}_0$ in Proposition \ref{p:nullity}. 

This null hypersurface is suitable as a boundary for the Cauchy problem, as proven by John \cite{fjohn}. In fact, it can be foliated both by $\alpha-$planes and $\beta-$planes - the former being the oriented lines parallel to $P_0$ in a fixed direction, while the latter are all oriented lines parallel to $P_0$ at a fixed height.

Denote
\[
{\mathcal H}=\{\gamma\in{\mathbb L}\;|\;\;\gamma\parallel P_0\;\}.
\]
Clearly ${\mathcal H}={\mathbb S}^1\times{\mathbb C}$ and for convenience, suppose that $P_0$ is horizontal in standard coordinates, so that in complex coordinates the hypersurface is $\xi=e^{i\theta}$, since the only restriction on the oriented line is that its direction lies along the equator. 

The distance between parallel lines in ${\mathbb R}^3$ induces the metric (\ref{e:fibmet}) and associated distance function $\|.\|$. In fact, there is an invariant metric on ${\mathcal H}$ with volume form $d^3Vol=d\eta\;d\bar{\eta}\;d\theta$. 

Suppose that $\gamma_0\notin {\mathcal H}$ and consider the intersection of this null hypersurface with the null cone $C_0(\gamma_0)\cap{\mathcal H}={\mathbb S}^1\times{\mathbb R}$. This surface intersects each fibre in an affine line. Let $Pr_0(\gamma)$ be the projection of $\gamma$ onto this affine line with respect to the fibre metric: $Pr_0:{\mathbb S}^1\times{\mathbb R}^2\rightarrow{\mathbb S}^1\times{\mathbb R}$.

We now prove the following explicit geometric formula that determines the value of a solution of the ultrahyperbolic equation from its value on the null hypersurface of type ${\mathcal H}_0$ in ${\mathbb L}$:

\vspace{0.1in}
\begin{Thm}\label{t:uheinv}
If $v:{\mathbb L}\rightarrow{\mathbb R}$ is a function satisfying the ultrahyperbolic equation, then at an oriented line $\gamma_0$ 
\[
v(\gamma_0)=-{\textstyle{\frac{1}{2\pi^2}}}\iiint_{\gamma\in{\mathcal H}} \frac{v(\gamma)-v(Pr_0(\gamma))}{\|\gamma-Pr_0(\gamma)\|^2}\;d^3Vol,
\]
where $Pr_0(\gamma)$ is projection onto the intersection of the null cone of $\gamma_0$ with the $\alpha$-plane through $\gamma$ that lies in the null hypersurface ${\mathcal H}$.
\end{Thm}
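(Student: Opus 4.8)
The plan is to reduce the statement to a computation in the flat conformal coordinates $(X_1,X_2,X_3,X_4)$ of (\ref{e:cflcoords}), in which $v$ satisfies the genuine ultrahyperbolic equation (\ref{e:uhe}) and Asgeirsson's mean value identity (\ref{asgeirsson}) is available as the sole analytic input. Writing $Z_1 = X_1+iX_2$ and $Z_2 = X_3+iX_4$, a translation of the conformal coordinates centres the null cone at the origin, so that John's distance function gives the standard cone $C_0(\gamma_0) = \{|Z_1|^2 = |Z_2|^2\}$. The bookkeeping to watch at this stage is that the data surface $\mathcal{H}$ sits at the equator $|\xi| = 1$, on the boundary of the chart of Proposition \ref{p:conf}, and that the integral is written with the Euclidean-invariant volume $d^3Vol = d\eta\,d\bar\eta\,d\theta$ and with the fibre distance $\|\cdot\|$ of (\ref{e:fibmet}); I would therefore set up a conformal chart adapted to lines parallel to $P_0$ and carry the conformal factor relating these invariant quantities to the flat coordinates.

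With coordinates in place, the next step is to lay bare the geometry hidden in the kernel. Recalling from the construction of $Pr_0$ that the null cone meets each $\alpha$-leaf (the fibre of fixed direction $\theta$, carrying the flat fibre metric (\ref{e:fibmet})) in a single affine line $\ell_\theta$, and that $Pr_0$ is orthogonal projection onto $\ell_\theta$ in the fibre metric, I would introduce within each fibre a coordinate $s$ along $\ell_\theta$ and a signed transverse coordinate $\rho$, so that $Pr_0(\gamma) = (s,0)$ and $\|\gamma - Pr_0(\gamma)\| = |\rho|$. The triple integral then splits as an integral over $\theta \in \mathbb{S}^1$ and along $\ell_\theta$ of the inner transverse integral $\int_{\mathbb{R}} \frac{v(s,\rho)-v(s,0)}{\rho^2}\,d\rho$. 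The subtraction of $v(Pr_0(\gamma))$ makes the numerator vanish to first order on the null cone $\{\rho = 0\}$, so this inner integral is a convergent principal value; evaluating it on the Fourier side, where $\int_{\mathbb{R}}(e^{ik\rho}-1)\rho^{-2}\,d\rho = -\pi|k|$, identifies it with the transverse half-Laplacian $(-\partial_\rho^2)^{1/2}v$ evaluated on $\ell_\theta$.

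The heart of the argument is then to combine these transverse contributions, integrated along the null-cone section $\Sigma = C_0(\gamma_0)\cap\mathcal{H} = \mathbb{S}^1\times\mathbb{R}$, and to show that the result reproduces $v(\gamma_0)$. Here I would feed in Asgeirsson's identity (\ref{asgeirsson}), which equates the circle averages of $v$ in the two definite coordinate planes through $\gamma_0$: this converts the nonlocal transverse operators along the $\alpha$-foliation into averages over the orthogonal definite planes, which collapse to the point value. Tracking the accumulated constants coming from the fibre metric, the invariant volume form and the Fourier evaluation is what should produce the normalization $-\tfrac{1}{2\pi^2}$ rather than merely a proportional statement.

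I expect the main obstacle to be precisely this reconstruction step: showing that the regularized null-cone kernel, whose numerator vanishes exactly where $Pr_0(\gamma) = \gamma$, extracts the unique combination of transverse data that the mean value property collapses to $v(\gamma_0)$, and with the correct constant. Two subsidiary points also need care, and I would handle them by analogy with the boundary-term argument in the proof of Theorem \ref{p:lh3}. First, the convergence and finite-part interpretation of the singular integral, both near $\Sigma$ (where oddness of the $O(\rho^{-1})$ term rescues the principal value) and at infinity, which rests on the same fall-off hypotheses that make the X-ray transform a genuine solution of (\ref{e:uhe}). Second, the conformal-factor bookkeeping flagged above: one must verify that expressing the Euclidean-invariant data $d^3Vol$ and $\|\cdot\|$ in the flat coordinates, where Asgeirsson's identity lives, does not introduce spurious weights that would spoil the final constant.
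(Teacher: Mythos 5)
Your plan has a genuine gap at exactly the point you flag as ``the heart of the argument.'' The paper does not derive the reconstruction from Asgeirsson's identity at all: its proof takes Fritz John's inversion formula (equation (13) of \cite{fjohn}),
\[
v(\gamma_0)=-{\textstyle{\frac{1}{\pi}}}\int_0^\infty \frac{F(R)-F(0)}{R^2}\,dR,\qquad F(R)={\textstyle{\frac{1}{2\pi}}}\int_0^{2\pi}\iint_{P_{(R,\alpha)}}\rho\,dr\,ds\,d\alpha,
\]
as the analytic input, and the entire content of the proof is geometric bookkeeping: one parameterizes ${\mathcal H}$ by $(R,\alpha,r)$, where $P_{(R,\alpha)}$ is the plane parallel to $\gamma_0$ at distance $R$ and angle $\alpha$ and $r$ runs along the horizontal ruling, and then checks by direct computation that in these coordinates the fibre metric is $d\eta\,d\bar\eta=dR^2+dr^2$, that $d^3Vol=dr\,dR\,d\alpha$, that the null cone of $\gamma_0$ meets ${\mathcal H}$ in $\{R=0\}$ so $Pr_0(R,\alpha,r)=(0,\alpha,r)$ and $R=\|\gamma-Pr_0(\gamma)\|$. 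This incidentally disposes of your worry about conformal-factor weights: there are none, because the computation is done in the invariant fibre coordinates rather than in the flat conformal chart of Proposition \ref{p:conf}. The constant $-\frac{1}{2\pi^2}$ is then just $-\frac{1}{\pi}\cdot\frac{1}{2\pi}$ from absorbing the $\alpha$-average into the triple integral.

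What you propose instead is to reprove John's inversion formula from scratch, with Asgeirsson's identity (\ref{asgeirsson}) as ``the sole analytic input.'' That is a much harder task than the theorem itself, and your sketch does not close it: the identification of the inner integral with $-\pi(-\partial_\rho^2)^{1/2}v$ on each $\alpha$-leaf is correct, but the assertion that Asgeirsson's identity then ``converts the nonlocal transverse operators into averages over the orthogonal definite planes, which collapse to the point value'' is precisely the statement to be proved, and Asgeirsson's identity alone does not do it --- it equates two circle averages but contains no limiting mechanism that produces a point value; John's derivation requires in addition a degeneration argument (shrinking one conic to a point while letting the conjugate one sweep out the planes parallel to $\gamma_0$) together with the fall-off hypotheses on $v$. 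Until that step is supplied, with the attendant factor-of-two bookkeeping between your signed transverse coordinate $\rho\in{\mathbb R}$ and John's one-sided integral over $R\in[0,\infty)$, you have a plausible program rather than a proof. The quickest repair is to do what the paper does: cite John's formula and reduce the theorem to the coordinate computation on ${\mathcal H}$.
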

\begin{proof}
Our starting point is Fritz John's formula (equation (13) of \cite{fjohn}) which gives the solution of the ultrahyperbolic equation at an oriented line $\gamma_0$ by the cylindrical average over all planes parallel to $\gamma_0$:

\begin{equation}\label{e:jinv}
v(\gamma_0)=-{\textstyle{\frac{1}{\pi}}}\int_0^\infty \frac{F(R)-F(0)}{R^2}dR,
\end{equation}
where
\[
F(R)={\textstyle{\frac{1}{2\pi}}}\int_0^{2\pi} \iint_{P_{(R,\alpha)}}\rho(r,s)drds\;d\alpha,
\]
$P_{(R,\alpha)}$ is the plane parallel to $\gamma_0$ at a distance $R$ and angle $\alpha$, and $(r,s)$ are flat coordinates on that plane.

Consider the map
\begin{equation}\label{e:z1}
z=\frac{1}{1+\nu\bar{\nu}}\left(2\nu R+(e^{iA}-\nu^2e^{-iA})r+i(e^{iA}+\nu^2e^{-iA})s\right)
\end{equation}
\begin{equation}\label{e:t}
x_3=\frac{1}{1+\nu\bar{\nu}}\left((1-\nu\bar{\nu})R-(\bar{\nu}e^{iA}+\nu e^{-iA})r -i(\bar{\nu}e^{iA}-\nu e^{-iA})s\right).
\end{equation}

For fixed $R\in{\mathbb R}$, $\nu\in{\mathbb C}$ and $A\in[0,2\pi)$, the map $(r,s)\mapsto(z(r,s),x_3(r,s))\in{\mathbb R}^3$ paramaterizes the plane a distance $R$ from the origin with normal direction $\nu$. Changing $A$ rotates the $r$- and $s$-axes in the plane. 

By a translation we can assume $\gamma_0$ contains the origin and so has complex coordinates $(\xi=\xi_0,\eta=0)$. Let us restrict attention to planes that are parallel $\gamma_0$. Thus the normal direction of $P_{(R,\nu)}$ is perpendicular to the direction of $\gamma_0$, we have
\[
\nu=\frac{\xi_0+e^{i\alpha}}{1-\bar{\xi}_0e^{i\alpha}},
\]
where $\alpha\in[0,2\pi)$.

The quantity $R$ is then just the distance from the plane to the line $\gamma_0$. Finally we want to rotate the ruling by $s$ on the plane so that it is horizontal and thus a curve in ${\mathcal H}$. Clearly this is achieved by
\[
\nu=r_0e^{iA},
\]
or more explicitly
\[
A={\textstyle{\frac{1}{2i}}}\ln \left[\frac{(\xi_0+e^{i\alpha})(1-{\xi}_0e^{-i\alpha})}{(\bar{\xi}_0+e^{-i\alpha})(1-\bar{\xi}_0e^{i\alpha})}\right]
\qquad
r_0=\left[\frac{(\xi_0+e^{i\alpha})(\bar{\xi}_0+e^{-i\alpha})}{(1-{\xi}_0e^{-i\alpha})(1-\bar{\xi}_0e^{i\alpha})}\right]^{\scriptstyle{\frac{1}{2}}}.
\]
The first of these is invertible for fixed $\xi_0$, $A \leftrightarrow \alpha$.

The horizontal ruling for $P_{(A,\alpha)}$ is
\[
z=\frac{2\nu}{1+\nu\bar{\nu}}R+\frac{1-\nu\bar{\nu}}{1+\nu\bar{\nu}}re^{iA}+ise^{iA}
\]
\[
x_3=\frac{1-\nu\bar{\nu}}{1+\nu\bar{\nu}}R-\frac{2|\nu|}{1+\nu\bar{\nu}}r.
\]
The direction of the ruling is
\[
\frac{\partial}{\partial s}=ie^{iA}\frac{\partial}{\partial z}-ie^{-iA}\frac{\partial}{\partial \bar{z}}
\]
so that the complex coordinates are $\xi=ie^{iA}$ and 
\[
\eta={\textstyle{\frac{1}{2}}}(z-2x_3\xi-\bar{z}\xi^2)=-(r-iR)\left(\frac{r_0-i}{r_0+i}\right)e^{iA}.
\]
Thus we have parameterized ${\mathcal H}$ by coordinates $(R,\alpha,r)$ and a straightforward calculation shows that the fibre metric is simply
\[
d\eta d\bar{\eta}=dR^2+dr^2 \qquad\qquad{\mbox{ and}} \qquad\qquad d^3Vol=drdRd\alpha.
\]
The null cone of $\gamma_0$ consists of all lines that either intersect or are parallel to it. For non-horizontal $\gamma_0$ the null cone intersects the null hypersurface ${\mathcal H}$ at the lines that intersect $\gamma_0$, namely those with coordinates $(R=0,\alpha,r)$ which is a line through the origin in each fibre. We have chosen $\gamma_0$ to contain the origin in ${\mathbb R}^3$, which is why the line in the fibre is through the origin. More generally the intersection of the null cone with a fibre is an affine line (not necessarily through the origin), as claimed.

Thus the fibre projection is simply $Pr_0(R,\alpha,r)=(0,\alpha,r)$ and
\[
R=\|\gamma-Pr_0(\gamma)\|.
\]
Now putting this together with the integral formula
\begin{align}
v(\gamma_0)=&-{\textstyle{\frac{1}{2\pi^2}}}\int_0^\infty \int_0^{2\pi} \frac{1}{R^2}\left[\iint_{P_{(R,\alpha)}}\rho(r,s)drds-\iint_{P_{(0,\alpha)}}\rho(r,s)drds\right]\;dRd\alpha \nonumber\\
=&-{\textstyle{\frac{1}{2\pi^2}}}\int_0^\infty \int_0^{2\pi} \int_{-\infty}^\infty \frac{v(R,\alpha,r)-v(0,\alpha,r)}{R^2}dr dR d\alpha\nonumber\\
=&-{\textstyle{\frac{1}{2\pi^2}}}\iiint_{\gamma\in{\mathcal H}} \frac{v(\gamma)-v(Pr_0(\gamma))}{\|\gamma-Pr_0(\gamma)\|^2}\;d^3Vol,\nonumber
\end{align}
as claimed.
\end{proof}

\vspace{0.1in}
%%%%%%%%%%%%%%%%%%%%%%%%%%%%%%%%%%%%%%%%%%%%%%%%%%%%%%%%%%%%%%%%%%%%%%%%%%%%%%%%%%%%%%%%%%%%%
\section{Topological Considerations}\label{s:4}
%%%%%%%%%%%%%%%%%%%%%%%%%%%%%%%%%%%%%%%%%%%%%%%%%%%%%%%%%%%%%%%%%%%%%%%%%%%%%%%%%%%%%%%%%%%%%
In this section global topological aspects of neutral metrics and almost product structures are explored. These include the relationship between umbilic points on surfaces in ${\mathbb R}^3$ and complex points on Lagrangian surfaces in ${\mathbb L}$, and an associated boundary value problem for the Cauchy-Riemann operator. The significance of these constructions for a number of conjectures from classical surface theory is indicated.

Some background on the problems of 4-manifold topology are discussed with particular attention to codimension two. The significance of neutral metrics to these issues is that they are uniquely capable of quantifying codimension two topological phenomena, and thus can be used as geometric tools to resolve certain long-standing questions. For the case of closed 4-manifolds, we end with a discussion of topological obstructions that arise to certain neutral geometric structures.

\vspace{0.1in}
%%%%%%%%%%%%%%%%%%%%%%%%%%%%%%%%%%%%%%%%%%%%%%%%%%%%%%%%%%%%%%%%%%%%%%%%%%%%%%%%%%%%%%%%%%%%%
\subsection{Global Results}\label{s:4.1}
Topological aspects of neutral metrics become evident in the identification of complex points on Lagrangian surfaces in ${\mathbb L}$ with umbilic points on surfaces in ${\mathbb R}^3$ \cite{GK04}. 

The Lagrangian surface $\Sigma\subset{|mathbb L}$ is formed by the oriented normal lines to the surface $S\subset{\mathbb R}^3$ and the index $i(p)\in{\mathbb Z}/2$ of an isolated umbilic point $p\in S$ on a convex surface is exactly one half of the complex index of the corresponding complex point $\gamma\in\Sigma$: $I(\gamma)=2i(p)\in{\mathbb Z}$. Thus problems of classical surface theory can be explored through studying Lagrangian surfaces in the four dimensional space of oriented lines ${\mathbb L}$ with its neutral metric ${\mathbb G}$.

The metric induced on a Lagrangian surface is Lorentz or degenerate - the degenerate points being the umbilic points of $S$ and the null cone at $\gamma$ being the principal directions of $S$ at $p$. The indices of isolated umbilic points carry geometric information from the neutral metric and vice versa.

If an isolated umbilic point $p$ has half-integer index then the principal foliation around $p$ is non-orientable - it defines a line field rather than a vector field about the umbilic point. The foliation is orientable if the index is an integer. The following theorem establishes a topological version of a result of Ferdinand Joachimsthal \cite{Joach46} for surfaces intersecting at a constant angle:

\vspace{0.1in}
\begin{Thm} \cite{GK19b}
If $S_1$ and $S_2$ are smooth convex surfaces intersecting with constant angle along a curve that is not umbilic in either $S_1$ or $S_2$, then the principal foliations of the two surfaces along the curve are either both orientable, or both non-orientable. 

That is, if $i_1\in{\mathbb Z}/2$ is the  sum of the umbilic indices inside the curve of intersection on $S_1$ and $i_2\in{\mathbb Z}/2$ is the  sum of the umbilic indices inside the curve of intersection on $S_2$ then
\[
2i_1=2i_2\;{\mbox{\rm  mod }}2.
\]
\end{Thm}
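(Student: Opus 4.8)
The plan is to reduce the statement to the comparison of a single $\mathbb{Z}/2$ monodromy invariant attached to each surface along the common curve $\gamma$, and then to extract the equality of these invariants from the constant-angle hypothesis. First I would pin down the topological meaning of $2i_1, 2i_2 \bmod 2$. Since $\gamma$ bounds a disk $D_j$ on each convex surface $S_j$, the sum $i_j$ of umbilic indices inside $\gamma$ equals, by Poincaré--Hopf for line fields, the total rotation $\frac{1}{2\pi}\oint_\gamma d\sigma_j$ of the principal direction field measured against a trivialization of $TS_j|_{D_j}$. The principal foliation is orientable on $D_j$ exactly when this line field lifts to a vector field, that is, when its $\mathbb{Z}/2$ monodromy around $\gamma$ is trivial, which happens iff $2i_j$ is even. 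Thus the assertion $2i_1 \equiv 2i_2 \pmod 2$ is precisely the statement that the two principal foliations have the same orientability monodromy along $\gamma$.

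Next I would make that monodromy computable from the second fundamental form. In an orthonormal frame $(T, W_j)$ of $TS_j$ along $\gamma$, with $T$ the common unit tangent to $\gamma$ and $W_j = N_j \times T$, write $L_j = II_j(T,T)$, $M_j = II_j(T, W_j)$, $N_j = II_j(W_j, W_j)$. The principal directions make angle $\delta_j$ with $T$, where $2\delta_j = \arg z_j$ for the complexified trace-free shape operator $z_j = (L_j - N_j) + 2 i M_j$. Because $\gamma$ is nowhere umbilic, $z_j$ is a nonvanishing loop in $\mathbb{C}^*$, and a short computation comparing the rotation of the principal line to that of $T$ (whose turning number around the bounding disk is $\pm1$) gives $2 i_j \equiv W_j \pmod 2$, where $W_j$ is the winding number of $z_j$ about the origin. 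The problem is therefore reduced to showing $W_1 \equiv W_2 \pmod 2$.

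The key geometric input is a Joachimsthal-type identity. A direct Darboux-frame computation shows that when $S_1$ and $S_2$ meet at constant angle $\theta$ along $\gamma$, the geodesic torsions agree, $M_1 = M_2 =: M$; this is the infinitesimal content of the classical theorem, specialising to ``$\gamma$ is a line of curvature on one surface iff on the other'' in the case $M \equiv 0$. Consequently $\mathrm{Im}\, z_1 = \mathrm{Im}\, z_2 = 2M$, so the two loops $z_1, z_2$ cross the real axis over the same (necessarily even) set of zeros of $M$. I would then compute $W_1 - W_2$ as the winding of $z_1\overline{z_2}$, whose imaginary part $2M[(L_2-N_2)-(L_1-N_1)]$ vanishes only at the zeros of $M$ and at the zeros of $(L_1 - N_1) - (L_2 - N_2)$, and reduce the claim to the evenness of the number of sign disagreements of $\mathrm{Re}\, z_j$ over the zeros of $M$.

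I expect this last parity count to be the main obstacle. The difficulty is that the transverse curvatures $N_j = II_j(W_j, W_j)$ are \emph{not} constrained by the constant-angle condition --- two surfaces may share $\gamma$, meet at constant angle, and still bend arbitrarily differently away from $\gamma$ --- so $\mathrm{Re}\, z_1$ and $\mathrm{Re}\, z_2$ need not even agree in sign at a given zero of $M$. The evenness must therefore be forced globally, using that a smooth function on the circle has an even number of transverse zeros, that each disk $D_j$ carries a consistently oriented boundary, and that the surfaces are genuinely embedded. The cleanest route is likely to recast the invariant in the neutral setting of Section~\ref{s:4.1}: the normal congruences send $S_1, S_2$ to Lagrangian surfaces $\Sigma_1, \Sigma_2 \subset \mathbb{L}$ whose complex points are the umbilics with $I(\gamma) = 2i(p)$, the principal foliation corresponds to the null directions of the induced Lorentz metric, and the orientability monodromy becomes the parity of the number of degenerate points of that metric along the boundary curve --- an invariant for which the shared geodesic torsion $M$ supplies exactly the identification of null structures needed to force $W_1 \equiv W_2 \pmod 2$.
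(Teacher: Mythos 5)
The paper does not actually contain a proof of this theorem --- it is quoted from \cite{GK19b} with no argument given --- so there is nothing internal to compare your proposal against; I can only assess it on its own terms. Your first three paragraphs are a correct and well-organised reduction: the identification of $2i_j \bmod 2$ with the orientability monodromy via Poincar\'e--Hopf for line fields, the formula $2i_j = W_j + 2$ with $W_j$ the winding number of $z_j = (L_j - N_j) + 2iM_j$, and the Darboux-frame computation showing that constant angle is equivalent to equality of geodesic torsions $M_1 = M_2$ are all sound, and the last of these is indeed the infinitesimal Joachimsthal input one expects the cited proof to start from.

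The gap you flag in your fourth paragraph is, however, genuine and fatal to the plan as assembled: the parity of the number of zeros of $M$ at which $\mathrm{sign}(L_1-N_1) \neq \mathrm{sign}(L_2-N_2)$ simply does not follow from the data you have gathered (equal geodesic torsion plus pointwise convexity of both second fundamental forms). Concretely, along a planar circle of curvature $\kappa = 10$ one can prescribe $2M = \sin t$, $L_j = \kappa\cos\alpha_j$ with $\alpha_j' = M$ and $\alpha_2 - \alpha_1$ a small constant, and then choose the free transverse curvatures so that $L_1 - N_1 = \cos t$ and $L_2 - N_2 = 2$; both forms are positive definite, the geodesic torsions agree, yet $z_1 = e^{it}$ has winding $1$ while $z_2 = 2 + i\sin t$ has winding $0$. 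So any correct proof must invoke a global property of the two \emph{closed} convex surfaces that rules out such jet data along the intersection curve --- this is the actual content of the theorem, and it is exactly the step your proposal leaves open. Your closing paragraph does not close it: translating into the Lagrangian picture in ${\mathbb L}$ converts the umbilic indices into Maslov-type indices of the boundary loops, but the ``identification of null structures supplied by the shared geodesic torsion'' is precisely the statement $\mathrm{Im}\,z_1 = \mathrm{Im}\,z_2$ that the example above shows to be insufficient on its own. To complete the argument you would need a new ingredient --- for instance a common reference Lagrangian (such as the congruence of lines normal to $\gamma$) against which both Maslov indices can be compared, or an index count over a region bounded by the two intersecting surfaces --- and that ingredient is currently missing.
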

\vspace{0.1in}

Pushing deeper, if one considers the problem of finding a holomorphic disc in ${\mathbb L}$ whose boundary lies on a given Lagrangian surface $\Sigma$, one encounters a classical problem of Riemann-Hilbert for the Cauchy-Riemann operator. Given a totally real surface $\Sigma$ in a complex surface ${\mathbb M}$, the Riemann-Hilbert problem seeks a map $f:(D,\partial D)\rightarrow({\mathbb M},\Sigma)$ which is holomorphic: it lies in the kernel of the Cauchy-Riemann operator $\bar{\partial}f=0$. For this to be an elliptic boundary value problem it is required that the boundary surface $\Sigma$ be totally real i.e. has no complex points. In the Riemannian case Lagrangian implies  totally real, and so Lagrangian boundary conditions are often used when the ambient metric is Riemannian.

In our case, due to the neutral signature of the metric formed by the composition of the symplectic structure (which defines Lagrangian) and the complex structure (which defines holomorphic), new features arise. In particular, Lagrangian surfaces may not be totally real, and therefore at complex points they are not suitable as a boundary condition for the $\bar{\partial}$-operator. If, however, the boundary surface is assumed to be spacelike with respect to the metric, then by the neutral Wirtinger identity it is also totally real and is suitable. 

The deformation from Lagrangian to spacelike by the addition of a holomorphic twist can be achieved over an open hemisphere. This \textit{contactification} of the problem throws away the surface $S$ in ${\mathbb R}^3$, as the perturbed spacelike surface $\tilde{\Sigma}$ in ${\mathbb L}({\mathbb R}^3)$ forms a 2-parameter family of twisting oriented lines in ${\mathbb R}^3$ that are not orthogonal to any surface. Any holomorphic disc with boundary lying on $\tilde{\Sigma}$ yields a holomorphic disc with boundary lying on $\Sigma$ by subtracting the holomorphic twist and so the problems are equivalent over a hemisphere.

The Riemann-Hilbert problem then follows the standard case, with the linearisation at a solution defining an elliptic boundary value problem with analytic index ${\mathcal I}$ given by
\[
{\mathcal I} = {\mbox{\rm Dim Ker }}\bar{\partial}-{\mbox{\rm Dim Coker }}\bar{\partial}.
\]
The analytic index for the problem is well-known to be related to the Keller-Maslov index $\mu(\partial D,\Sigma)$ along the boundary by
\[
{\mathcal I}=\mu+2.
\]
The Keller-Maslov index in the case of a section of ${\mathbb L}$ is given by the sum $i$ of the umbilic indices inside the curve $\partial D$ in the boundary $\Sigma$, as viewed in ${\mathbb R}^3$ \cite{GK04}:
\[
\mu=4i.
\]
For the Keller-Maslov class to control the dimension of the space of holomorphic discs, one needs the dimension of the cokernel to be zero. If the problem is Fredholm regular, by a small perturbation the cokernel vanishes and the space of holomorphic discs is indeed determined by the number of enclosed umbilic points.

Remarkably, the Riemann-Hilbert problem associated with a convex sphere containing a single umbilic point \textit{is} Fredholm regular: 
\vspace{0.1in}
\begin{Thm}\label{t:fred}\cite{GK20a}
Let $\Sigma\subset{\mathbb L}$ be a Lagrangian sphere with a single isolated complex point. Then the Riemann-Hilbert problem with boundary $\Sigma$ is Fredholm regular. 
\end{Thm}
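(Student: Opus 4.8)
The plan is to recast Fredholm regularity as the vanishing of a cokernel and then to force that vanishing from the spacelike normal form supplied by the contactification. First I would linearise the Riemann-Hilbert problem at the holomorphic disc $f_0\colon (D,\partial D)\to({\mathbb L},\tilde\Sigma)$, where $\tilde\Sigma$ is the spacelike surface obtained from $\Sigma$ by adding the holomorphic twist over a hemisphere. With respect to the integrable complex structure $J_0$ of Theorem \ref{t:gk}, the linearisation is the Cauchy-Riemann operator $\bar\partial$ acting on Hölder (or $W^{1,p}$) sections $V$ of the complex rank-two bundle $f_0^*T{\mathbb L}$, subject to the totally real boundary condition $V|_{\partial D}\in T\tilde\Sigma$. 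Because $\tilde\Sigma$ is spacelike, the neutral Wirtinger identity makes it totally real, so the problem is genuinely elliptic and its analytic index equals $\mu+2=4i+2$, with $i$ the umbilic index enclosed by $f_0(\partial D)$. Fredholm regularity is exactly the statement that this $\bar\partial$ is surjective, equivalently that the formally adjoint $\partial$-problem carrying the annihilator boundary condition has only the zero solution.

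The heart of the argument is this adjoint vanishing, and I would establish it by a Green-type identity driven by the sign of the induced metric. For a putative cokernel element $\psi$ one has, for every admissible variation $V$, the pairing $\int_D\bigl(\langle\bar\partial V,\psi\rangle-\langle V,\partial\psi\rangle\bigr)=\int_{\partial D}B(V,\psi)$. The plan is to test this against a variation built from $\psi$ so that the boundary integrand becomes a quadratic form in $\psi$; the spacelike, and hence positive-definite, character of $T\tilde\Sigma$ along $\partial D$ should then render that form definite, forcing $\psi|_{\partial D}=0$. Since $\psi$ is $\partial$-holomorphic, unique continuation from the boundary propagates the vanishing across $D$, so $\psi\equiv 0$ and the cokernel is trivial. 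The structural reason this succeeds is the size of the Euclidean group: the six-dimensional $E(3)$ acts on ${\mathbb L}$ by isometries preserving $(\mathbb{G},J_0,\omega)$, so its Killing fields restrict along $f_0$ to explicit holomorphic Jacobi fields in $\ker\bar\partial$, certifying that the expected solution space is genuinely present and that the positivity needed in the boundary form is the sharp consequence of having exactly these many ambient symmetries.

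The main obstacle is controlling the sign of the boundary pairing uniformly along the \emph{entire} loop, especially near the single complex point. One must verify that $f_0(\partial D)$ remains inside the hemisphere on which the twist makes $\tilde\Sigma$ spacelike, and that the totally real boundary plane rotates so that $B$ never degenerates as the loop circulates the complex point; equivalently, in any frame splitting $f_0^*T{\mathbb L}$ into line subbundles one must preclude an unbalanced partial Maslov index dropping below $-1$, which would manufacture a spurious cokernel element. Because the ambient signature is neutral rather than Riemannian, the standard automatic-regularity argument for non-negative Maslov index does not transfer verbatim, so the definiteness must be read off from the spacelike condition together with the precise geometry of the normal congruence of a convex sphere carrying one umbilic -- and it is this uniform sign control, tied to the sharpness of the six Euclidean degrees of freedom, that is the crux of the proof.
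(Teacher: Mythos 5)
Your route is genuinely different from the paper's, and it has a gap at its core. The mechanism you rely on -- a Green-type identity whose boundary term is rendered definite by the spacelike, positive-definite character of $T\tilde{\Sigma}$, thereby forcing a putative cokernel element $\psi$ to vanish on $\partial D$ -- is not a valid criterion for surjectivity. For the $\bar{\partial}$-operator with totally real boundary conditions on a rank-two bundle over the disc, the cokernel is governed by the partial Maslov indices of the boundary loop of planes (surjectivity holds iff both are $\geq -1$), not by the signature of the induced metric on those planes: there are loops of totally real planes carrying positive-definite induced metric whose Riemann--Hilbert problem has nontrivial cokernel, so positivity alone cannot close the argument. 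Worse, the twisted surface $\tilde{\Sigma}$ is spacelike but no longer Lagrangian (its lines are orthogonal to no surface), so the annihilator boundary condition satisfied by $\psi$ is not identified with $T\tilde{\Sigma}$ by $\omega$, and there is no reason the pairing $B(V,\psi)$, tested against a $V$ built from $\psi$ that still satisfies the primal boundary condition, is a definite form in $\psi$. You in fact concede the real issue in your last paragraph -- one must preclude a partial index dropping to $-2$ -- but the only tool you offer for that is the positivity you have already over-credited, plus an appeal to ``the precise geometry of the normal congruence,'' which is exactly the content of the theorem.

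The role you assign to the Euclidean group is also inverted. Exhibiting Killing fields as holomorphic Jacobi fields in $\ker\bar{\partial}$ bounds the kernel from \emph{below}; by the index identity ${\mathcal I}=\dim\ker\bar{\partial}-\dim\operatorname{coker}\bar{\partial}$ this bounds the cokernel from below as well, which is the opposite of what regularity requires. The paper's argument uses the group differently: $E(3)$ acts on ${\mathbb L}$ holomorphically and symplectically, hence preserves the entire Riemann--Hilbert problem, and acts transitively on oriented lines; fixing the single complex point, one passes to the equivariant problem, and it is this equivariance of the boundary condition under a large symmetry group preserving $(J_0,\omega)$ that pins down the partial indices and yields Fredholm regularity ``as in the totally real case.'' If you want to salvage a direct analytic proof, the step you must actually supply is a computation or estimate of the two partial indices of $f_0^{*}T\Sigma$ along $\partial D$ -- for \emph{every} holomorphic disc with boundary on $\Sigma$ -- using the section structure of $\Sigma\subset T{\mathbb S}^2$ and the symmetry, not the metric sign.
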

\vspace{0.1in}
The reason behind this result is that the Euclidean isometry group acts holomorphically and symplectically on ${\mathbb L}$, thus preserving the problem. The action is also transitive and so fixing the single complex point one considers the equivariant problem, the result being that it is Fredholm regular, as in the totally real case. 

The non-existence of a convex sphere containing a single umbilic point is the famous conjecture of Constantin Carath\'eodory, and Theorem \ref{t:fred} gives the reason the Conjecture is true. Namely, were such a remarkable surface $S$ to exist, the Riemann-Hilbert problem with boundary given by the normal lines $\Sigma$ would be Fredholm regular and so have the property that the dimension of the space of parameterised holomorphic discs with boundary lying on it would be entirely determined by the number of umbilic points in the interior on $S$.
\begin{equation}\label{e:cc1}
{\mathcal I}={\mbox{\rm Dim Ker }}\bar{\partial}=4i+2
\end{equation}
This property would also hold for a dense set of perturbations of $S$ in an appropriate function space. To show that such a surface $S$ cannot exist, one can seek to find violations of equation (\ref{e:cc1}), in particular, a holomorphic disc which encloses a totally real disc on the boundary $\Sigma$. 

By equation (\ref{e:cc1}), if the boundary encloses a totally real disc, then ${\mathcal I}=2$. However, since the M\"obius group acts on the space of parameterized holomorphic discs, the space of unparameterized holomorphic discs is $2-3=-1$. Thus, over an umbilic-free region of the remarkable surface $S$ it should be impossible to solve the $\bar{\partial}$-problem. 

The proof of the Carath\'eodory Conjecture in \cite{GK11} follows from the existence of holomorphic discs with boundary enclosing umbilic-free regions, as established by evolving to them using mean curvature flow of a spacelike surface in ${\mathbb L}$, thus disproving equation (\ref{e:cc1}). 

At this point in time two thirds of the proof given in \cite{GK11} has appeared in print, with  the final part containing the boundary estimates for mean curvature flow currently under review. 

In fact, the interior estimates required to prove long time existence and convergence hold for more general spacelike mean curvature flow with respect to indefinite metrics satisfying certain curvature conditions \cite{GK19a}. 

The final step of the proof of the Conjecture is the establishment of boundary estimates for mean curvature flow in ${\mathbb L}$ and sufficient control to show that the flow weakly converges in an appropriate function space to a holomorphic disc. The boundary conditions used for mean curvature flow (a second order system) include a constant angle condition and an asymptotic holomorphicity condition. 

The constant angle condition is defined between a pair of spacelike planes that intersect along a line and is hyperbolic in nature. The asymptotic holomorphicity condition ensures that the ultimate disc is holomorphic rather than just maximal. 

The sizes of the constant hyperbolic angle and the added holomorphic twist are free parameters in the evolution and can be used to control the flowing surface. If one views it as a codimension two capillary problem, the effect of the parameter changes is to increase the friction at the boundary, stopping it from skating off the hemisphere, thus preserving strict parabolicity.

An analogous result in the rotationally symmetric case for mean curvature flow in the space of oriented lines with Dirichlet and Neumann boundary conditions shows that the evolving surface can be made to converge to a holomorphic disc  - in this case to a family of holomorphic discs called the Bishop family \cite{Bishop65} - or to a maximal surface, depending on the boundary condition imposed \cite{GK23a}.

For the full flow one can then show that:
\vspace{0.1in}
\begin{Thm}\label{t:ashol} \cite{GK11}
Let $S$ be a $C^{3+\alpha}$ smooth oriented convex surface in ${\mathbb R}^3$ without umbilic points and suppose that the Gauss image of $S$ contains a closed hemisphere. Let $\Sigma\subset {\mathbb L}$ be the oriented normal lines of $S$ forming a Lagrangian surface in the space of oriented lines. 

Then $\exists f:D\rightarrow {\mathbb L}$ with $f\in C^{1+\alpha}_{loc}(D)\cap C^0(\overline{D})$ satisfying
\begin{enumerate}
\item[(i)] $f$ is holomorphic,
\item[(ii)] $f(\partial D)\subset\Sigma$.
\end{enumerate}
\end{Thm}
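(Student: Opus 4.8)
The plan is to construct $f$ as a limit of a spacelike mean curvature flow, following the strategy foreshadowed above. The starting observation is that since $S$ has no umbilic points, the Lagrangian surface $\Sigma$ has no complex points and is therefore totally real; but in neutral signature totally real is not enough to run a well-behaved parabolic flow, so the first step is the \emph{contactification}. Using the hypothesis that the Gauss image of $S$ contains a closed hemisphere---which lets one parametrize $\Sigma$ over a hemisphere of ${\mathbb S}^2$ via the inverse Gauss map---I would add a holomorphic twist to deform $\Sigma$ into a \emph{spacelike} surface $\tilde\Sigma\subset{\mathbb L}$. Since a holomorphic twist can be subtracted off, any holomorphic disc with boundary on $\tilde\Sigma$ produces one with boundary on $\Sigma$, so it suffices to solve the Riemann--Hilbert problem for $\tilde\Sigma$.

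Next I would set up spacelike mean curvature flow $f_t:(D,\partial D)\to({\mathbb L},\tilde\Sigma)$ from a convenient spacelike initial disc whose boundary lies on $\tilde\Sigma$ and satisfies the two prescribed boundary conditions: a constant hyperbolic angle between the spacelike tangent planes meeting along the boundary, and an asymptotic holomorphicity condition. The magnitudes of the hyperbolic angle and of the holomorphic twist enter as free parameters; I would tune them both to keep $f_t$ strictly spacelike and to hold the boundary on the hemisphere. The role of the asymptotic holomorphicity condition is to force the limit to be genuinely holomorphic rather than merely maximal.

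The interior behaviour of the flow is then governed by the estimates of \cite{GK19a}: under the relevant curvature conditions these give long-time existence together with the interior gradient and higher-order bounds needed for convergence. The remaining and genuinely hard ingredient is the \emph{boundary} estimate. One must show that the hyperbolic angle condition is \textit{sticky} enough to confine $f_t(\partial D)$ to $\tilde\Sigma$ and to preserve strict parabolicity up to the boundary, so that the flow does not degenerate there---the codimension-two capillary phenomenon mentioned in the introduction. I expect this to be the principal obstacle, since the maximum-principle techniques available for flowing hypersurfaces do not transfer directly to higher codimension with an indefinite ambient metric; this is precisely the step still under review in \cite{GK11}.

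Granting the boundary estimates, the conclusion follows in the usual way. The uniform interior and boundary bounds allow one to extract a sequence $f_{t_k}$ converging, weakly in $C^{1+\alpha}_{loc}(D)\cap C^0(\overline D)$, to a limit $f$. The asymptotic holomorphicity condition passes to the limit and yields $\bar\partial f=0$, giving conclusion (i), while the confinement of the boundary gives $f(\partial D)\subset\tilde\Sigma$; subtracting the holomorphic twist then produces conclusion (ii), $f(\partial D)\subset\Sigma$.
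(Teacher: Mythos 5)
Your outline reproduces exactly the strategy the paper attributes to \cite{GK11} and sketches in Section \ref{s:4.1}: contactification of $\Sigma$ to a spacelike $\tilde\Sigma$ over the Gauss hemisphere, spacelike mean curvature flow with the constant hyperbolic angle and asymptotic holomorphicity boundary conditions, interior estimates from \cite{GK19a}, and extraction of a holomorphic limit after subtracting the twist. The paper gives no independent proof here, and your candid identification of the boundary (\emph{sticky} angle) estimate as the unproven step matches the paper's own statement that this is the sole remaining part under review.
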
 
\vspace{0.1in}
This would conclude the proof of the Carath\'eodory Conjecture for $C^{3+\alpha}$ smooth surfaces.

The appearance of Gauss hemispheres here is noteworthy, for this meets with a conjecture of Victor Toponogov that a complete convex plane must have an umbilic point, albeit at infinity \cite{Top}. Toponogov showed that such planes have hemispheres as Gauss image and established his conjecture under certain fall-off conditions at infinity.

In fact, the same reasoning as above that pits Fredholm regularity against mean curvature flow proves the Toponogov Conjecture:
\vspace{0.1in}
\begin{Thm} \cite{GK20b}
Every $C^{3+\alpha}$-smooth complete convex embedding of the plane $P$, satisfies $\inf_P |\kappa_1-\kappa_2|=0$.
\end{Thm}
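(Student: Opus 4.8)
The plan is to argue by contradiction, running the same Fredholm-regularity-versus-mean-curvature-flow dichotomy that underlies the Carath\'eodory case, but now over a non-compact Lagrangian. Suppose $\inf_P|\kappa_1-\kappa_2|>0$, so that the convex plane $P$ carries no umbilic point and the difference of principal curvatures is uniformly bounded below by some $\delta>0$. The first step is to invoke Toponogov's structural result \cite{Top}: a complete convex embedding of the plane has a Gauss image containing a closed hemisphere. This is precisely the hypothesis needed to apply Theorem \ref{t:ashol} to the oriented normal congruence $\Sigma\subset{\mathbb L}$ of $P$, producing a holomorphic disc $f:D\rightarrow{\mathbb L}$ with $f\in C^{1+\alpha}_{loc}(D)\cap C^0(\overline{D})$ and $f(\partial D)\subset\Sigma$.

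The second step is the index bookkeeping. Because $P$ is everywhere non-umbilic, the Lagrangian surface $\Sigma$ carries no complex points, hence is totally real, and the Riemann-Hilbert problem with boundary $\Sigma$ is a genuine elliptic boundary value problem for $\bar{\partial}$. As in Theorem \ref{t:fred}, the Euclidean group acts holomorphically and symplectically on ${\mathbb L}$ and preserves the problem; this symmetry is what is used to force Fredholm regularity, so that the analytic index computes the dimension of the moduli of parameterized discs. With no enclosed umbilic points the Keller-Maslov index vanishes, $i=0$, and equation (\ref{e:cc1}) gives ${\mathcal I}={\mbox{\rm Dim Ker }}\bar{\partial}=2$.

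The contradiction is then immediate from reparameterization. The M\"obius group of the disc is three-dimensional and acts freely on the parameterized solutions, so the space of unparameterized holomorphic discs with boundary on $\Sigma$ has dimension $2-3=-1$. A moduli space of negative dimension must be empty, yet the disc $f$ produced in the first step is a genuine element of it. This incompatibility rules out the contradiction hypothesis and forces $\inf_P|\kappa_1-\kappa_2|=0$.

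The main obstacle is not the bookkeeping but verifying that the hypotheses feeding Theorem \ref{t:ashol} and the index count survive in the complete, non-compact setting. Two points require care. First, one must confirm that the uniform lower bound $\inf_P|\kappa_1-\kappa_2|>0$, rather than mere non-umbilicity, is what simultaneously guarantees the hemispherical Gauss image and the asymptotic holomorphicity and fall-off conditions built into the mean curvature flow that produces $f$; without the uniform bound the flow could degenerate at infinity and the disc need not exist. Second, the Fredholm regularity of Theorem \ref{t:fred} is stated for a Lagrangian \emph{sphere} with an isolated complex point, and transporting the equivariant argument to the Lagrangian plane means controlling the linearized $\bar{\partial}$-operator over a boundary running off to infinity, so that the cokernel genuinely vanishes and the dimension identity ${\mathcal I}={\mbox{\rm Dim Ker }}\bar{\partial}$ of the middle step is legitimate. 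Once these asymptotic issues are settled, the negative-dimension contradiction closes the argument exactly as in the compact Carath\'eodory case.
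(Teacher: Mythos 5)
Your proposal follows essentially the same route as the paper: contradiction via Toponogov's observation that a complete convex plane has a hemisphere as Gauss image, application of Theorem \ref{t:ashol} to produce a holomorphic disc with boundary on the normal congruence $\Sigma$, Fredholm regularity holding because a putative counterexample is totally real (even at infinity), and the resulting negative-dimensional moduli count $2-3=-1$ contradicting the disc's existence. The asymptotic caveats you flag at the end are precisely the points the paper's one-line sketch defers to \cite{GK20b}, so the proposal is a faithful expansion of the intended argument.
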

\vspace{0.1in}
The proof follows from applying Theorem \ref{t:ashol} in this case, while Fredholm regularity is established easily, as a putative counter-example is by assumption totally real (even at infinity).

Without the high degree of symmetry of the Euclidean group, one would not expect Fredholm regularity to hold and this obstructs the generalisation of the Carath\'eodory Conjecture to non-Euclidean ambient metrics. This turns out to be the case and the delicate nature of the problem is revealed:

\vspace{0.1in}
\begin{Thm}\cite{G20}
For all $\epsilon>0$, there exists a smooth Riemannian metric $g$ on ${\mathbb R}^3$ and a smooth strictly convex 2-sphere $S\subset{\mathbb R}^3$ such that
\begin{itemize}
    \item[(i)] $S$ has a single umbilic point,
    \item[(ii)] $\|g-g_{0}\|^2\leq\epsilon$,
\end{itemize}
where $\|.\|$ is the $L_2$ norm on ${\mathbb R}^3$ with respect to the flat metric $g_0$.
\end{Thm}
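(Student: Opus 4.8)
The plan is to demonstrate the sharpness of the Carath\'eodory phenomenon by constructing, for each $\epsilon>0$, a strictly convex sphere that carries a single umbilic of the index \emph{forbidden} in the flat case. The guiding principle is topological: for any convex $S\cong S^2$ the principal directions form a line field on the sphere, so by Poincar\'e--Hopf the sum of the umbilic indices equals $\chi(S^2)=2$. A surface with exactly one umbilic must therefore realize an umbilic of index $2$ -- equivalently, via the correspondence $I(\gamma)=2i(p)$, a complex point of complex index $4$ on the associated Lagrangian surface $\Sigma\subset{\mathbb L}$. In the Euclidean metric this is impossible, since the local index bound \cite{GK12} caps an isolated umbilic at index $1$; the whole construction is aimed at defeating precisely this bound by an arbitrarily $L^2$-small deformation of $g_0$.

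First I would solve the problem locally. Fix a point $p$, a small ball $B_\delta(p)$, and work in isothermal coordinates $z=x+iy$ for the induced metric, so that the traceless second fundamental form is encoded by a complex coefficient $q$ with $II_0=\mathrm{Re}(q\,dz^2)$. A Darbouxian umbilic has $q$ of winding $-1$ and index $\tfrac12$; an index-$2$ umbilic requires the winding $-4$, modelled by $q\sim\bar z^4$. I would prescribe on a surface patch through $p$ an induced metric $I$ and a second fundamental form $II=H\,I+II_0$ with $II$ positive definite (so $S$ stays strictly convex) and with $q\sim\bar z^4+O(|z|^5)$, giving the single index-$2$ umbilic. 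Outside $B_\delta(p)$ I would take $g=g_0$ and let $S$ close up to a convex sphere along a patch that is umbilic-free in the flat metric, so that the global index $2$ is carried entirely by $p$.

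The ambient metric is recovered from $(I,II)$ through the Gauss--Codazzi equations. For a non-flat ambient metric these relate $(I,II)$ to the sectional and normal components of the ambient curvature along $S$, rather than imposing the flat compatibility that underlies \cite{GK12}; the resulting curvature slack is exactly what permits the winding $-4$. I would solve for the $1$-jet of $g$ transverse to $S$ inside $B_\delta(p)$, extend it to a smooth positive-definite $g$, and interpolate smoothly to $g_0$ across $\partial B_\delta(p)$ without introducing new umbilics. The $L^2$-estimate is then a matter of concentration: since only $g$ itself (not its derivatives) enters the norm, and the perturbation is supported in $B_\delta(p)$ with $|g-g_0|$ bounded pointwise, one gets $\|g-g_0\|^2\le C\,\mathrm{Vol}\big(B_\delta(p)\big)=O(\delta^3)\le\epsilon$ for $\delta$ small, while the large first derivatives of $g$ permitted on $B_\delta(p)$ are what drive $q$ up to winding $-4$.

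The main obstacle is the local realization: one must verify that a \emph{genuine} smooth, strictly convex, index-$2$ umbilic actually survives as a solution of Gauss--Codazzi for a positive-definite $g$, i.e. that defeating the flat bound of \cite{GK12} does not reintroduce a hidden obstruction through the higher-order terms of $q$ or through the interpolation across $\partial B_\delta(p)$. Concretely, the delicate point is to keep $II$ positive definite and the exterior patch umbilic-free while forcing $q\sim\bar z^4$, and to confirm that the matched metric is everywhere Riemannian and $C^\infty$. This is where the sharpness truly resides: the index bound is not a formal consequence of the sphere's topology but a rigidity enforced by the flat structure, and the construction must expend exactly enough curvature -- yet arbitrarily little in $L^2$ -- to break it.
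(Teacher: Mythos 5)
Your overall strategy --- deform the ambient metric so as to force the traceless second fundamental form of a convex sphere to have a single zero of winding $-4$ --- is the right one, and the reduction via Poincar\'e--Hopf together with the local model $q\sim\bar z^4$ is sound (modulo the small slip that \cite{GK12} gives $i(p)<2$ for smooth surfaces, not $i(p)\le 1$, which is Hamburger's bound in the analytic category; either way an index-$2$ umbilic is excluded in the flat case). The genuine gap lies in where you localize the perturbation. By supporting $g-g_0$ only in a tiny ball $B_\delta(p)$ about the umbilic, you force the complement $S\setminus B_\delta(p)$ to be a smooth strictly convex surface-with-boundary sitting in genuinely flat ${\mathbb R}^3$, umbilic-free, and carrying $q$-winding $-4$ around the small boundary circle --- equivalently, a convex sphere in flat space all of whose umbilic charge is confined to an arbitrarily small disc. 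You assert this exterior patch exists (``let $S$ close up to a convex sphere along a patch that is umbilic-free in the flat metric'') but give no construction, and it is not an innocent step: it asks the \emph{flat} metric to concentrate the entire index $2$ near one point, which is precisely the kind of behaviour that the Carath\'eodory-type rigidity results constrain. All standard flat examples (ellipsoids, convex surfaces of revolution) have their umbilics spread out, and nothing in your argument supplies the required patch; the Gauss--Codazzi/Fermi-coordinate realization inside $B_\delta(p)$ is fine, but it cannot rescue the exterior.

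The paper's construction inverts your localization and thereby avoids the issue entirely: the surface $S$ is kept equal to the standard round $2$-sphere as a subset of ${\mathbb R}^3$ (no longer round in the new metric), and it is the ambient metric that is deformed along the whole of $S$. Since the second fundamental form of a fixed hypersurface depends on the $1$-jet of the ambient metric along it, prescribing the normal derivative of $g$ near $S$ lets one ``brush'' the principal foliation into any configuration one chooses --- in particular one with a single umbilic --- with no flat-space construction required anywhere on the sphere; the $L^2$ bound is then obtained by keeping $|g-g_0|$ pointwise small (only the first derivatives of $g$ need to be large, and these do not enter the $L^2$ norm) on a small neighbourhood of $S$. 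To salvage your version you would either have to prove that the flat exterior patch exists --- a substantial claim in its own right --- or enlarge the support of your perturbation to a neighbourhood of the whole sphere, at which point you have essentially reproduced the paper's argument.
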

\vspace{0.1in}
The proof here is constructive: the Euclidean metric is deformed while keeping the standard round 2-sphere fixed (although not round in the deformed metric) and one can essentially brush the principal foliation of the surface into any configuration one chooses by changing the ambient geometry.  

Finally, establishing the local index bound $i(p)\leq1$ for any isolated umbilic point $p$ has long been the preferred route to proving the Carath\'eodory Conjecture in the real analytic case \cite{Ham41} \cite{Ivan02}.  The above methods can also be used to find a slightly weaker local index bound for isolated umbilics on smooth surfaces:

\vspace{0.1in}
\begin{Thm}\cite{GK12}
The index of an isolated umbilic $p$ on a $C^{3,\alpha}$ surface in ${\mathbb R}^3$ satisfies $i(p)<2$.
\end{Thm}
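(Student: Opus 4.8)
The plan is to localize the global Riemann--Hilbert machinery used for the Carath\'eodory and Toponogov conjectures to a single umbilic point, and to read off the index bound from the analytic index of the resulting boundary value problem.

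First I would localize. Choose a small embedded loop $c\subset S$ encircling $p$ and enclosing no other umbilic. The oriented normal lines to $S$ along $c$, together with those over the disc it bounds, form a Lagrangian surface $\Sigma\subset{\mathbb L}$ whose only complex point is the normal line $\gamma_p$ at $p$; by the index correspondence of \cite{GK04} its complex index is $I(\gamma_p)=2i(p)$. Because $c$ meets no umbilic, the boundary normals constitute a totally real curve, so the Cauchy--Riemann operator with this boundary datum is an elliptic Riemann--Hilbert problem, with analytic index ${\mathcal I}=\mu+2$. Since the Keller--Maslov index along the boundary is $\mu=4i(p)$, this gives ${\mathcal I}=4i(p)+2$.

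Next I would establish Fredholm regularity exactly as in Theorem~\ref{t:fred}: the Euclidean group acts holomorphically and symplectically on ${\mathbb L}$ and preserves the problem, and together with a generic perturbation of $S$ away from $p$ this kills the cokernel, so that $\mathrm{Dim}\,\mathrm{Ker}\,\bar{\partial}={\mathcal I}=4i(p)+2$ as in (\ref{e:cc1}). I would then produce an actual holomorphic disc whose boundary encloses $\gamma_p$: deforming $\Sigma$ to a spacelike surface by a holomorphic twist over a hemisphere (contactification) and running spacelike mean curvature flow with the interior estimates of \cite{GK19a}, one obtains, as in Theorem~\ref{t:ashol}, a map $f:(D,\partial D)\to({\mathbb L},\Sigma)$ with $\bar{\partial}f=0$ whose boundary loop winds once around the complex point.

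The bound then comes from confronting this existence with a geometric cap on the Maslov index. The disc $f$ is a limit of embedded spacelike discs and is holomorphic, so positivity of intersections applies: an immersed holomorphic disc with totally real boundary obeys an adjunction-type inequality that bounds $\mu$ in terms of its boundary winding and its number of interior double points, and the convexity of $S$ near $p$ caps the latter. This forces $\mu=4i(p)\le 6$, that is $i(p)\le\tfrac{3}{2}<2$. Equivalently, after quotienting the ${\mathcal I}$-dimensional space of parametrised discs by the $3$-dimensional M\"obius group, the induced family of boundary loops on the $2$-dimensional surface $\Sigma$ can carry only a bounded number of moduli, and the same threshold emerges.

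I expect the main obstacle to be twofold. The first is the \emph{boundary} estimate for the mean curvature flow --- the same \emph{stickiness} issue flagged in \cite{GK11} for confining the flowing boundary to the hemisphere --- which is what guarantees that the disc actually forms rather than escaping to a degenerate limit. The second, and the reason this method yields only the non-sharp bound $i(p)<2$ rather than the conjectural $i(p)\le 1$, is sharpening the adjunction estimate: closing the gap between $\tfrac{3}{2}$ and $1$ would require excluding the extra interior double point that the present count still permits, which appears to lie beyond what a second-order mean-curvature-flow argument can detect.
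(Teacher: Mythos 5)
Your proposal correctly identifies the two engines of the argument --- Fredholm regularity of the Riemann--Hilbert problem and the production of holomorphic discs by mean curvature flow --- but it diverges from the paper's proof at both the setup and the final step, and the final step as you state it has a genuine gap. The paper's route (as summarized in Section \ref{s:4.1}) is not a localization to a disc neighbourhood of $p$: it is a global surgery on the Lagrangian section, in which one removes the hyperbolic (negative-index) umbilic points and glues in totally real cross-caps, producing a closed higher-genus Lagrangian surface whose only complex point is $\gamma_p$, and then extends the equivariant Fredholm regularity of Theorem \ref{t:fred} to that closed surface. This matters because the regularity mechanism --- the transitive holomorphic and symplectic action of the Euclidean group fixing the single complex point --- is a statement about a closed section of the bundle ${\mathbb L}\to{\mathbb S}^2$ \cite{GK20a}; it does not apply to the compact-with-boundary piece $\Sigma$ you build over a small loop $c$, and a ``generic perturbation of $S$ away from $p$'' is not an adequate substitute, since genericity only yields regularity for a dense set of boundary data rather than for the specific surface in hand. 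The contradiction is then exactly the one used for the Carath\'eodory Conjecture: a holomorphic disc whose boundary lies in a totally real (umbilic-free) region has $\mu=0$, hence ${\mathcal I}=2$ by (\ref{e:cc1}) and a $(2-3)=-1$-dimensional space of unparametrized discs, so its existence (supplied by Theorem \ref{t:ashol} over a hemisphere) violates Fredholm regularity; the threshold $i(p)<2$ comes out of the index bookkeeping of the cross-cap construction, not from any bound on the Maslov index of the disc itself.

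By contrast, your closing step --- ``positivity of intersections'' plus an ``adjunction-type inequality'' forcing $\mu=4i(p)\le 6$ --- is the part that would fail. You give no mechanism for why convexity of $S$ near $p$ caps the number of interior double points of the limit disc, nor why the resulting bound is exactly $6$ rather than some other number; the figure $6$ appears to be reverse-engineered from the answer $i\le\tfrac{3}{2}$. Adjunction for discs with totally real boundary bounds $\mu$ from \emph{below} in terms of double points, which is the wrong direction for your purposes, and in any case the disc produced by the flow is only $C^{1+\alpha}_{loc}(D)\cap C^{0}(\overline{D})$ and need not be immersed up to the boundary. If you replace this step by the dimension-count contradiction above, and replace the localization by the cross-cap surgery (which is where the real work of \cite{GK12} lies), you recover the paper's argument.
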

\vspace{0.1in}
The proof follows from the extension of Theorem \ref{t:fred} to surfaces of higher genus by removing hyperbolic umbilic points and adding totally real cross-caps to the Lagrangian section. The existence of holomorphic discs over open hemispheres again contradicts Fredholm regularity and the local index bound follows.

Once again, the role of the Euclidean isometry group is paramount, and even a small perturbation of the ambient metric means that the index bound does not hold.

\vspace{0.1in}
\begin{Thm}\cite{G20}
For all $\epsilon>0$ and $k\in{\mathbb Z}/2$, there exists a smooth Riemannian metric $g$ on ${\mathbb R}^3$ and a smooth embedded surface $S\subset{\mathbb R}^3$ such that
\begin{itemize}
    \item[(i)] $S$ has an isolated umbilic point of index $k$,
    \item[(ii)] $\|g-g_{0}\|^2\leq\epsilon$,
\end{itemize}
where $\|.\|$ is the $L_2$ norm on ${\mathbb R}^3$ with respect to the flat metric $g_0$.
\end{Thm}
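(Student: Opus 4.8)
The plan is to produce the prescribed index entirely through the ambient geometry: I would fix once and for all a single simple embedded surface $S$ and encode the winding of the principal foliation in a small, compactly supported perturbation of the flat metric. Recall that the umbilic points of $S$ are exactly the zeros of the traceless second fundamental form $\mathring{II}$, and that on the conformal surface $S$ this tensor is captured by its $(2,0)$-part, a complex-valued function $\psi$ with $\mathring{II}=\mathrm{Re}(\psi\,dw^2)$ in an isothermal coordinate $w=u+iv$ centred at the chosen umbilic $p$. The principal line field is the locus where $\psi\,dw^2$ is real, so its direction is $-\tfrac12\arg\psi\bmod\pi$; traversing a small loop about $p$ shows that the index of an isolated umbilic equals $-\tfrac12$ times the winding number of $\psi$. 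Hence to produce an isolated umbilic of index $k\in\mathbb Z/2$ it suffices to arrange that $\psi$ has an isolated zero at $p$ of winding $-2k$: for instance $\psi\sim\bar w^{2k}$ when $k>0$, $\psi\sim w^{2|k|}$ when $k<0$, and a zero of vanishing winding such as $\psi\sim|w|^2$ when $k=0$.

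First I would take $S$ to be a fixed plane through $p$, which in the flat metric $g_0$ is totally umbilic, so that $\mathring{II}\equiv0$ and $\psi\equiv0$ to begin with; the entire winding will be supplied by the perturbation. Writing $g=g_0+h$, I would compute the first variation $\delta\mathring{II}$ of the traceless second fundamental form. To first order this is linear in the one-jet of $h$ along $S$, the dominant contribution being the variation $\delta\Gamma^{n}_{ij}[h]$ of the normal Christoffel symbols; in particular the normal derivatives $\partial_n h_{ij}$ of the tangential--tangential components enter $\delta\mathring{II}$ pointwise and may be chosen freely. This is the key flexibility: the linearised operator $h\mapsto\delta\mathring{II}$, viewed as a map into sections of $\mathrm{Sym}^2_0 T^*S$ near $p$, is surjective and admits a bounded right inverse, so that any prescribed complex profile $\psi$ can be matched.

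Next I would promote this to an exact statement. Let $\Phi_0=\mathrm{Re}(\chi(w)\,\bar w^{2k}\,dw^2)$, with the holomorphic--antiholomorphic convention dictated by the sign of $k$ as above, be a fixed smooth compactly supported target tensor on $S$, where $\chi\equiv1$ near $p$. Since the nonlinear map $h\mapsto\mathring{II}[g_0+h]$ vanishes at $h=0$ and has surjective differential with bounded right inverse, the implicit function theorem in suitable Hölder spaces yields, for each small $t>0$, a compactly supported symmetric $2$-tensor $h_t$ with $\mathring{II}[g_0+h_t]=t\,\Phi_0$ and $\|h_t\|\lesssim t$. For $t$ small, $g=g_0+h_t$ is a smooth Riemannian metric equal to $g_0$ outside a fixed ball, the plane $S$ stays embedded, and on the neighbourhood where $\chi\equiv1$ one has $\psi=t\,\bar w^{2k}$ exactly, so $p$ is an isolated umbilic of index precisely $k$. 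Finally, since $\|h_t\|_{L^2(\mathbb R^3)}=O(t)\to0$, choosing $t$ small enough gives $\|g-g_0\|^2\le\epsilon$, completing the construction.

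The main obstacle is the flexibility step: verifying that the first variation $h\mapsto\delta\mathring{II}$ is genuinely surjective onto traceless symmetric $2$-tensors near $p$ with a right inverse bounded in the function spaces required by the implicit function theorem, and then confirming that the solution $h_t$ can be kept compactly supported and of size $O(t)$. A secondary point requiring care is the index bookkeeping --- the factor of $\tfrac12$ and the holomorphic-versus-antiholomorphic dichotomy distinguishing positive from negative $k$ --- together with the verification that the resulting zero of $\psi$ is genuinely isolated, which here is immediate because $\psi=t\,\bar w^{2k}$ exactly in a neighbourhood of $p$. I would emphasise that the smallness of $h_t$ poses no obstruction to a large index, since the winding of $\psi$ is a topological quantity unaffected by scaling the amplitude, and the plane carries no competing contribution to $\mathring{II}$.
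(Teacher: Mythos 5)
Your proposal is correct and follows essentially the same route as the paper's source \cite{G20}: hold the surface fixed and perturb the ambient metric so as to brush the principal foliation --- equivalently, prescribe the traceless second fundamental form $\mathring{II}$ near $p$ as $t\,\mathrm{Re}(\bar w^{2k}\,dw^2)$ --- the only cosmetic difference being that the paper works with the standard round sphere rather than a plane. One simplification worth noting: the implicit function theorem is unnecessary, since for the plane $\{x_3=0\}$ a compactly supported perturbation with $h_{3\mu}=0$ and $h_{ij}=x_3\,k_{ij}(x)$ leaves the induced metric and unit normal unchanged and yields $II_{ij}=-\tfrac{1}{2}\,\partial_3 h_{ij}\big|_{x_3=0}=-\tfrac{1}{2}k_{ij}\big|_{x_3=0}$ \emph{exactly}, so the required metric can be written down in closed form with $\|h\|_{L^2}=O(t)$.
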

\vspace{0.1in}
Finally, the local umbilic index bound $i(p)\leq1$ of Hamburger \cite{Ham41} for real analytic surfaces has recently been used to prove results on the zeros of certain holomorphic polynomials. In particular, a polynomial whose zero set is invariant under inversion in the unit circle is called {\em self-inversive} \cite{BaM52} \cite{JaS18} \cite{OhaR74} \cite{TSS} \cite{V17}. 

\vspace{0.1in}

\begin{Thm}\cite{GK23b}
Let $P_N$ be a polynomial of degree $N$ with self-inversive second derivative and suppose that none of the roots of $P_N$ lies on the unit circle. Then the number of roots (counted with multiplicity) of $P_N$ inside the unit circle is less than or equal to $\lfloor N/2\rfloor+1$.
\end{Thm}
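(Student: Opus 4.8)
The plan is to convert this algebraic statement about roots of $P_N$ into a statement about the index of an isolated umbilic point on a real-analytic surface, and then to invoke Hamburger's local index bound $i(p)\le 1$ for real-analytic surfaces \cite{Ham41}, the real-analytic incarnation of the local Carath\'eodory phenomenon discussed above. The arithmetic target $\lfloor N/2\rfloor+1$ is exactly what a constraint of the shape $i(p)\le 1$ produces once one knows that the index equals $n_- - \lfloor N/2\rfloor$, where $n_-$ is the number of roots of $P_N$ strictly inside the unit circle. So the whole proof reduces to manufacturing a surface whose umbilic realises this index, and to computing that index by the argument principle.

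First I would build the surface. Working in a stereographic coordinate $\xi$ on $S^2$, in which inversion in the unit circle $\xi\mapsto 1/\bar\xi$ becomes reflection in the equator $|\xi|=1$, I would prescribe the support function $h$ of a convex body so that its traceless complex Hessian --- the quantity whose zeros are the umbilic points and whose argument orients the principal line field --- is built from $P_N$. Since umbilic data is second order, the natural second-order object here is $P_N''$, and the crucial point is that the hypothesis \emph{$P_N''$ is self-inversive} is precisely the reality and reflection symmetry under $\xi\mapsto 1/\bar\xi$ needed for this prescribed Hessian datum to descend from a genuine real-analytic support function near the equator, with an isolated umbilic sitting inside the disc $|\xi|<1$. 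This is where the self-inversive hypothesis is consumed, and it is what forces the roots of $P_N$ to contribute symmetrically across $|\xi|=1$.

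With the surface in hand I would compute the index of the enclosed umbilic as a winding number. Along $|\xi|=1$ the principal line field has angle $-\tfrac12\arg$ of the complex curvature datum, so by the argument principle its total turning is governed by the signed count of zeros of that datum inside the disc; threading $P_N$ and the conformal factor $(1+\xi\bar\xi)^{-2}$ through this count, and using self-inversivity to pair each exterior root $1/\bar z_j$ with its interior mate $z_j$, the winding should collapse to $i(p)=n_- - \lfloor N/2\rfloor$ (the count $n_-$ being finite and unambiguous because no root lies on the circle). Feeding this into $i(p)\le 1$ gives $n_-\le \lfloor N/2\rfloor+1$, which is the assertion.

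The main obstacle is this middle step: verifying that the self-inversive datum $P_N''$ genuinely assembles into a real-analytic surface with an \emph{isolated} umbilic, so that Hamburger's theorem applies, and pinning the additive constant in the index formula to be exactly $\lfloor N/2\rfloor$ rather than something off by one. Both turn on careful bookkeeping of how the conformal factor and the degree-$N$ growth of $P_N$ interact with the $\xi\mapsto 1/\bar\xi$ symmetry, in particular on controlling the behaviour at $\xi=\infty$ (the South pole), which is where any discrepancy in the constant, or any failure of the umbilic to be isolated, would surface. Once that analysis is complete the index bound delivers the root count at once.
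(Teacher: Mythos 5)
Your proposal follows essentially the same route as the paper: it constructs a real-analytic strictly convex surface from the polynomial with self-inversive second derivative, identifies the index of the resulting isolated umbilic with the count of roots inside the unit circle (up to the shift by $\lfloor N/2\rfloor$), and then invokes Hamburger's bound $i(p)\le 1$ to obtain $n_-\le\lfloor N/2\rfloor+1$. This is exactly the method the paper describes for Theorem \cite{GK23b}, so no further comparison is needed.
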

\vspace{0.1in}
This result is in the spirit of a converse to the Gauss-Lucas theorem \cite{Marden} in which the zeros of the first derivative of a polynomial are restricted by the zeros of the polynomial. Here, however, by methods of differential geometry, the locations of the zeros of the second derivative restrict the zeros of the polynomial - the first such application. It is also worth noting that the result is sharp.

The method of proof is to take a polynomial with self-inversive second derivative and to construct a real analytic strictly convex with an isolated umbilic point whose index is determined by the number of zeros inside the unit circle.

\vspace{0.1in}
%%%%%%%%%%%%%%%%%%%%%%%%%%%%%%%%%%%%%%%%%%%%%%%%%%%%%%%%%%%%%%%%%%%%%%%%%%%%%%%%%%%%%%%%%%%%%
\subsection{Four Manifold Topology}\label{s:4.2}

The proof by Grigori Perelman of Thurston's Geometrization Conjecture \cite{Perelman02}\cite{Perelman03a}\cite{Perelman03b} naturally raises the question as to whether closed 4-dimensional manifolds can be geometrized in some way. The approach in three dimensions, however, does not apply in higher dimensions and even basic things are harder. 

For example, any finitely presented group can be the fundamental group of a smooth closed 4-manifold, while the fundamental group of a prime 3-manifold must be a quotient of the isometry group of one of the eight Thurston homogenous geometries \cite{Thurston79}, and so it is clear that new geometric paradigms are required. 

To make matters worse, while in three dimensions there is no distinction between smooth, piecewise-linear and topological structures on closed manifolds, in higher dimension this may not be true. If one considers open manifolds, these problems are compounded further. In each dimension $n\geq3$ there are uncountably many {\it fake} ${\mathbb R}^n$'s - open topological manifolds that are homotopy equivalent to, but not homeomorphic to ${\mathbb R}^n$ \cite{Curtis65}\cite{Glaser67}\cite{McMillan62}. While many of these involve infinite constructions, an example of Barry Mazur in dimension four requires only the attachment of two thickened cells \cite{Mazur61}.

Four dimensions also has its share of peculiar problems that do not arise in higher dimensions. In particular, the Whitney trick, in which closed loops are contracted to a point across a given disc, plays a major role in many higher dimensional results, for example Stephen Smale's proof of the h-cobordism theorem \cite{Smale62}. The issue is that, while in dimensions five and greater a generic 2-disc is embedded, in dimension four a generic 2-disc is only immersed and will have self-intersections, making it unsuitable to contract loops across.

Against this array of formidable difficulties, the Disc Theorem of Micheal Freedman \cite{Freedman82} utilizes a doubly infinite codimension two construction to claim that there is a topological work-around for the Whitney trick. This result leads to the proof of the topological Poincar\'e Conjecture in dimension four, as well as the classification of all simply connected closed topological 4-manifolds based almost entirely on their intersection form in the second homology. 

Contradictions with Donaldson's ground-breaking work on smooth 4-manifolds \cite{Donaldson83} lead to extraordinary families of exotic manifolds (homeomorphic but not diffeomorphic) not seen in any other dimension. Since the work of John Milnor \cite{Milnor59} it has been known that exotic differentiable structures in dimensions seven and above exist, but only in finitely many families. According to the Disc Theorem exotic differentiable structures in dimension four occur in uncountable families - indeed, no 4-manifold is known to have only countably many distinct differentiable structures.

Both the original Disk Theorem \cite{Freedman82} and subsequent attempts to complete it \cite{Behrens21} \cite{Freedman83} \cite{FQ14}  \cite{G19} \cite{Hartnett21} have depended upon the iterative attachment of 1- and 2-handles or there generalizations, as one attempts to push unwanted codimension two intersections to infinity. The ultimate homeomorphism that is sought is shown to exist using Bing shrinking and what is called decomposition space theory \cite{Bing58}.

One key aspect of these efforts is that they all involve codimension two constructions - gluing in thickened 2-discs or more general surfaces into 4-manifolds. The work in this survey involves geometric paradigms associated with neutral metrics which can gain more control of these codimension two constructions. 

Unlike Riemannian metrics which exist on all smooth manifolds, neutral metrics see the topology of the underlying manifold and can be used to express topological invariants. The next section considers closed 4-manifolds and illustrates the manner in which the existence of certain neutral metrics restricts the topology of the underlying 4-manifold. These are modest steps in the direction of understanding a tiny part of the wild world of 4-manifolds in which there is a splitting $4=2+2$.

\vspace{0.1in}
%%%%%%%%%%%%%%%%%%%%%%%%%%%%%%%%%%%%%%%%%%%%%%%%%%%%%%%%%%%%%%%%%%%%%%%%%%%%%%%%%%%%%%%%%%%%%
\subsection{Closed Neutral 4-manifolds}\label{s:4.3}

The simplest topological invariant of a closed 4-manifold $M$ is its {\em Euler number} $\chi(M)$. Let $H_n(M,{\mathbb R})$ be the $n^{th}$ homology group of $M$ with real coefficients and $b_n$ be the associated Betti numbers $n=0,1,...,4$. For a closed connected 4-manifold we have $b_0=b_4=1$, and $b_3=b_1$ by Poincar\'e duality and the Euler number is defined
\[
\chi(M)=\sum_{n=0}^4(-1)^n{\mbox{ dim}}\;H_n(M,{\mathbb R})=2-2b_1+b_2,
\]
The Chern-Gauss-Bonnet Theorem states that one can express this geometrically as
\[
\chi(M)={{\frac{\epsilon}{32\pi^2}}}\int_M|W(g)|^2-2|Ric(g)|^2+{\textstyle{\frac{2}{3}}}S^2\;d^4V_g,
\]
for any metric $g$ of definite ($\epsilon=1$) or neutral signature ($\epsilon=-1$) \cite{Law}. 

On a closed 4-manifold there is a natural symmetric bilinear pairing on the integral second homology $H_2(M,{\mathbb Z})$. It is the sum of the number of transverse intersection points between two surfaces representing the homology classes. 

The intersection form can be diagonalised over ${\mathbb R}$ and the number of positive and negative eigenvalues is denoted $b_+$ and $b_-$, respectively. Thus $b_2=b_++b_-$ and the {\em signature} $\tau(M)=b_+-b_-$ is another topological invariant of $M$.

The existence of a neutral metric on a closed 4-manifold is equivalent to the existence of a field of oriented tangent 2-planes on the manifold \cite{Mats91}. Moreover:

\vspace{0.1in}
\begin{Thm} \cite{HaH} \cite{kam02} \cite{Mats91}
Let $M$ be a closed 4-manifold admitting a neutral metric. Then
\begin{equation}\label{e:mats}
\chi(M)+\tau(M)=0{\mbox{ mod }}4
\qquad{\mbox and }\qquad
\chi(M)-\tau(M)=0{\mbox{ mod }}4.
\end{equation}

If $M$ is simply connected, these conditions are sufficient for the existence of a neutral metric.
\end{Thm}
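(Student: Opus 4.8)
The plan is to reduce the entire statement to a question about the tangent bundle and then to a piece of lattice arithmetic. By the equivalence recorded just above \cite{Mats91}, a neutral metric on the closed oriented $4$-manifold $M$ exists precisely when $M$ carries a field of oriented tangent $2$-planes, that is, a splitting $TM=E_+\oplus E_-$ into two oriented rank-$2$ bundles (the orthogonal complement of an oriented $2$-plane in oriented $4$-space is canonically oriented). Writing $\alpha=e(E_+)$ and $\beta=e(E_-)$ for the Euler classes in $H^2(M;\mathbb{Z})$, the multiplicativity of the Euler class and the identity $p_1=e^2$ for oriented plane bundles give, on the torsion-free quotient of $H_2(M;\mathbb{Z})$,
\[
\alpha\cdot\beta=\chi(M),\qquad \alpha^2+\beta^2=p_1(TM)[M]=3\tau(M),\qquad \alpha+\beta\equiv w_2(M)\pmod 2.
\]
Everything below is organised around the two classes $u=\alpha+\beta$ and $v=\alpha-\beta$.

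For the necessity of \eqref{e:mats} I would argue as follows. Since $u$ and $v$ both reduce mod $2$ to $w_2(M)$, and since by Wu's theorem $w_2$ is a characteristic element of the unimodular intersection form, both $u$ and $v$ are characteristic vectors. Van der Blij's lemma then gives $u^2\equiv\tau\pmod 8$ and $v^2\equiv\tau\pmod 8$. Expanding $u^2=3\tau+2\chi$ and $v^2=3\tau-2\chi$ yields $2(\chi+\tau)\equiv0$ and $2(\chi-\tau)\equiv0\pmod 8$, which are exactly the two congruences in \eqref{e:mats}. This argument uses nothing about $\pi_1$, so necessity holds for every closed $M$.

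For sufficiency, suppose $M$ is simply connected and satisfies \eqref{e:mats}. Writing $\chi=2+b_++b_-$ and $\tau=b_+-b_-$, the two congruences are equivalent to the single transparent statement that $b_+$ and $b_-$ are both odd; in particular $b_\pm\geq1$, so the intersection form is indefinite. The obstruction-theoretic input is Hirzebruch--Hopf \cite{HaH}: the fibre of the bundle of oriented tangent $2$-planes is the oriented Grassmannian $\widetilde{Gr}_2(\mathbb{R}^4)\cong S^2\times S^2$, and since $M$ is simply connected the primary obstruction in $H^3(M;\pi_2)$ vanishes, leaving a single secondary obstruction in $H^4(M;\pi_3(S^2\times S^2))\cong\mathbb{Z}\oplus\mathbb{Z}$. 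Hirzebruch--Hopf identify the vanishing of this obstruction, modulo its indeterminacy, with the existence of classes $\alpha,\beta\in H^2(M;\mathbb{Z})$ realising the three identities above. Equivalently, one must produce characteristic vectors $u,v$ (both lifting $w_2$) with $u^2=3\tau+2\chi$ and $v^2=3\tau-2\chi$; since $u\equiv v\pmod 2$, the classes $\alpha=\tfrac12(u+v)$ and $\beta=\tfrac12(u-v)$ are then automatically integral.

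It remains to realise the prescribed squares, and here indefiniteness does all the work. For an indefinite unimodular form the set of squares of characteristic vectors is exactly $\tau+8\mathbb{Z}$: van der Blij gives the inclusion $\subseteq$, while the reverse inclusion follows from the classification of indefinite forms by feeding odd (resp.\ even) entries into a $\langle1\rangle\oplus\langle-1\rangle$ (resp.\ hyperbolic) summand and using that differences of triangular numbers exhaust $\mathbb{Z}$. The hypotheses \eqref{e:mats} are precisely the congruences $3\tau\pm2\chi\equiv\tau\pmod 8$, so both target squares lie in $\tau+8\mathbb{Z}$ and the required $u,v$ exist. The main obstacle in this programme is not the arithmetic but the obstruction-theoretic step: one must verify that the secondary obstruction in $H^4(M;\mathbb{Z}^2)$ is the only obstruction and that its class, together with the indeterminacy coming from re-choosing the section over the $2$-skeleton, is governed exactly by the Diophantine system $\alpha\beta=\chi$, $\alpha^2+\beta^2=3\tau$. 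Granting the Hirzebruch--Hopf computation, the conditions \eqref{e:mats} force indefiniteness and the construction closes up.
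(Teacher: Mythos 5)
The paper offers no proof of this statement: it is quoted as a known result with citations to Hirzebruch--Hopf, Kamada and Matsushita, so there is no internal argument to compare yours against. Judged on its own terms, your proposal is a faithful reconstruction of the standard proof from those sources and is essentially correct. The necessity half is complete: granting the equivalence (stated in the paper immediately before the theorem) between a neutral metric and a field of oriented tangent $2$-planes, your identities $\alpha\beta=\chi$, $\alpha^2+\beta^2=3\tau$, $\alpha+\beta\equiv w_2 \pmod 2$ are right, both $u=\alpha+\beta$ and $v=\alpha-\beta$ are characteristic for the unimodular intersection form, and van der Blij turns $u^2=3\tau+2\chi\equiv\tau$ and $v^2=3\tau-2\chi\equiv\tau \pmod 8$ into exactly \eqref{e:mats}, with no use of $\pi_1$. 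The sufficiency half is also correctly organised: the congruences are equivalent to $b_+$ and $b_-$ both being odd, hence the form is indefinite, hence every element of $\tau+8\mathbb{Z}$ is the square of a characteristic vector, and $\alpha=\frac{1}{2}(u+v)$, $\beta=\frac{1}{2}(u-v)$ are integral because two characteristic vectors of a unimodular form differ by twice a lattice vector. The one genuinely substantive step you do not prove is the Hirzebruch--Hopf realisation theorem itself, namely that for simply connected $M$ the only obstruction to a section of the bundle with fibre $\widetilde{Gr}_2(\mathbb{R}^4)\cong S^2\times S^2$ is the class in $H^4(M;\mathbb{Z}\oplus\mathbb{Z})$ governed by the system $\alpha\beta=\chi$, $\alpha^2+\beta^2=3\tau$ (equivalently, that a $2$-plane field exists iff $M$ and $\overline{M}$ both admit almost complex structures). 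You flag this dependence honestly, and it is precisely the content of \cite{HaH}, so the proposal is the intended argument rather than a new route; completing it from scratch would require carrying out that obstruction-theoretic computation, including the analysis of how re-choosing the section over the $2$-skeleton changes the final obstruction.
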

\vspace{0.1in}

Thus, neither ${\mathbb S}^4$ nor ${\mathbb C}P^2$ admit a neutral metric, while the K3 manifold does. 

Given a neutral metric $g'$ on $M$, the Euler number and signature can be expressed in terms of curvature invariants by
\[
\chi(M)={{\frac{-1}{32\pi^2}}}\int_M|W^+|^2+|W^-|^2-2|Ric|^2+{\textstyle{\frac{2}{3}}}S^2\;d^4V_g.
\]
\[
\tau(M)=b_+-b_-={{\frac{1}{48\pi^2}}}\int_M|W^+|^2-|W^-|^2\;d^4V_g.
\]
where $W^\pm$ is the Weyl curvature tensor split into its self-dual and anti-self-dual parts, $Ric$ is the Ricci tensor and $S$ is the scalar curvature of $g'$.

From these and Theorem \ref{t:4}, the following can be proven
\vspace{0.1in}
\begin{Thm}\cite{GG22b}\label{t:5}
Let $(M,g')$ be a closed, conformally flat, scalar flat, neutral 4-manifold. If $g'$ admits a parallel isometric paracomplex structure, then 
\[
\tau(M)=0\qquad\qquad{\mbox{ and }}\qquad\qquad\chi(M)\geq 0.
\]

If, moreover, the Ricci tensor of $g'$ has negative norm $|Ric(g')|^2\leq 0$, then $M$ admits a flat Riemannian metric.
\end{Thm}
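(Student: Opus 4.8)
The plan is to transfer the problem to the Riemannian partner of $g'$ supplied by Theorem \ref{t:4}, and then read off the two invariants from the Gauss--Bonnet and signature formulae recorded just above the statement. First I would build that partner. Writing $j$ for the given parallel isometric paracomplex structure, set $g(\cdot,\cdot)=g'(j\cdot,\cdot)$. Since $j$ is $g'$-isometric with $j^2=1$, it is $g'$-self-adjoint, so $g$ is a symmetric $2$-tensor; on the $(+1)$-eigenplane of $j$ one has $g=g'$ while on the $(-1)$-eigenplane $g=-g'$, so the neutral signature of $g'$ is converted into a positive-definite one and $g$ is Riemannian. Because $\nabla^{g'}$ is torsion-free and annihilates both $g'$ and $j$, it annihilates $g$ as well, whence $\nabla^{g}=\nabla^{g'}$ and $j$ is parallel and isometric for $g$ too. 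As $g'=g(j\cdot,\cdot)$ is by hypothesis conformally flat, Theorem \ref{t:4} then forces $g$ to be Einstein.

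With $g$ Einstein in hand the two invariants follow quickly. For the signature I use the neutral metric directly: conformal flatness of $g'$ makes its Weyl tensor vanish, so $W^{+}(g')=W^{-}(g')=0$, and the signature formula immediately gives $\tau(M)=0$. For the Euler number I instead use the Riemannian Gauss--Bonnet formula ($\epsilon=1$). The Einstein condition $Ric(g)=\tfrac{S}{4}g$ gives $|Ric(g)|^2=\tfrac{S^2}{4}$, and substituting into the integrand $|W|^2-2|Ric|^2+\tfrac23 S^2$ collapses it to $|W(g)|^2+\tfrac16 S(g)^2$, so that
\[
\chi(M)=\frac{1}{32\pi^2}\int_M\Big(|W(g)|^2+\tfrac16 S(g)^2\Big)\,d^4V_g\geq 0,
\]
both terms being manifestly nonnegative for the Riemannian metric $g$.

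For the final assertion I compare this with the neutral Gauss--Bonnet formula ($\epsilon=-1$). Feeding $W^{\pm}(g')=0$ and $S(g')=0$ into it leaves only the Ricci term, giving $\chi(M)=\tfrac{1}{16\pi^2}\int_M|Ric(g')|^2\,d^4V_{g'}$. Under the extra hypothesis $|Ric(g')|^2\leq0$ this makes $\chi(M)\leq0$, and combined with $\chi(M)\geq0$ from the previous paragraph it forces $\chi(M)=0$. Returning to the Riemannian expression, $\chi(M)=0$ with a nonnegative integrand forces $W(g)\equiv0$ and $S(g)\equiv0$; the Einstein condition then gives $Ric(g)\equiv0$, and a conformally flat, Ricci-flat metric in dimension four has vanishing curvature operator. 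Hence $g$ is a flat Riemannian metric on $M$.

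The main obstacle, and the point I would treat most carefully, is the opening signature-and-connection bookkeeping: verifying that $g=g'(j\cdot,\cdot)$ is genuinely positive definite, which relies on the $(+1)$- and $(-1)$-eigenplanes of the isometric $j$ being respectively spacelike and timelike for $g'$ (exactly the sign-flip illustrated by the indefinite product of two surfaces with $j=I\oplus-I$), and confirming that the parallel isometric structure transports faithfully to $g$ so that Theorem \ref{t:4} is legitimately applicable in this reversed direction. Once that identification is secured, the curvature computations are routine substitutions into the stated formulae.
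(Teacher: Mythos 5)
Your argument follows exactly the route the paper indicates: pass to the Riemannian partner $g=g'(j\cdot,\cdot)$, invoke Theorem \ref{t:4} to make $g$ Einstein, and then read $\tau(M)=0$, $\chi(M)\geq 0$ and the flatness statement off the signature and Chern--Gauss--Bonnet integrands in the two signatures, with all substitutions ($|Ric(g)|^2=\tfrac{S^2}{4}$, the sign $\epsilon=-1$, and the forced vanishing of $W(g)$ and $S(g)$ when $\chi(M)=0$) carried out correctly. The single point you flag --- that the eigenplanes of the isometric $j$ must be definite rather than Lorentzian for $g$ to be positive definite, so that Theorem \ref{t:4} applies in the reversed direction --- is precisely the detail the survey also leaves to the cited reference, so your reconstruction matches the paper's intended proof.
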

\vspace{0.1in}

On the other hand, Theorem \ref{t:4} can also be used on Riemannian Einstein 4-manifolds to find obstructions to parallel isometric paracomplex structures:

\vspace{0.1in}
\begin{Thm}\cite{GG22b}\label{t:6}
Let $(M,g)$ be a closed Riemannian Einstein 4-manifold. 

If $g$ admits a parallel isometric paracomplex structure, then $\tau(M)=0$. 
\end{Thm}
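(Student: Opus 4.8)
The plan is to transfer the problem to the associated neutral metric, use the conformal flatness furnished by Theorem~\ref{t:4}, and then read off the vanishing of the signature from the neutral Hirzebruch-type formula. First I would pass from the Riemannian data $(M,g,j)$ to the neutral metric $g'(\cdot,\cdot)=g(j\cdot,\cdot)$, and verify that $g'$ is a genuine metric of signature $(++--)$. Since $j$ is $g$-isometric and $j^2=\mathrm{Id}$, it is $g$-self-adjoint, because $g(jv,w)=g(jv,j^2w)=g(v,jw)$, so $g'$ is symmetric; and as $j$ has $(\pm1)$-eigenplanes of dimension two, $g'$ is positive definite on the $(+1)$-eigenbundle and negative definite on the $(-1)$-eigenbundle, giving neutral signature globally. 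Parallelism of $j$ then places us squarely in the setting of Theorem~\ref{t:4}.

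Next, since $(M,g)$ is Einstein, Theorem~\ref{t:4} tells us that $g'$ is locally conformally flat. In dimension four this is equivalent to the vanishing of the Weyl tensor of $g'$, so $W(g')\equiv 0$; in particular its self-dual and anti-self-dual parts vanish identically, $W^+=W^-=0$. I would then invoke the neutral signature formula recorded just before Theorem~\ref{t:5},
\[
\tau(M)=\frac{1}{48\pi^2}\int_M |W^+|^2-|W^-|^2\;d^4V_{g'},
\]
whose integrand is identically zero on the closed manifold $M$, yielding $\tau(M)=0$.

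The content of the argument is thus carried entirely by Theorem~\ref{t:4}, and I expect the only point needing care to be the legitimacy of the neutral signature formula itself: one must know that the Pontryagin-number representation in terms of $W^\pm$ persists in signature $(2,2)$, where the Hodge star on $2$-forms still satisfies $*^2=\mathrm{Id}$ so that the (anti-)self-dual splitting of the curvature survives. Since this formula is already stated in the excerpt for neutral metrics, nothing further is required: pointwise vanishing of the Weyl tensor forces the signature integrand to vanish irrespective of the indefiniteness of the norms $|W^\pm|^2$, and the closedness of $M$ makes the integral well defined.
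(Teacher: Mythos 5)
Your argument is correct and follows exactly the route the paper intends: pass to the associated neutral metric $g'(\cdot,\cdot)=g(j\cdot,\cdot)$, use Theorem~\ref{t:4} to conclude that the Einstein condition forces $g'$ to be locally conformally flat (so $W^+(g')=W^-(g')=0$), and then read off $\tau(M)=0$ from the neutral-signature curvature integral for $\tau$. The preliminary checks you include (self-adjointness of $j$ giving symmetry of $g'$, the $(2,2)$ signature from the two-dimensional eigenbundles, and the survival of the self-dual/anti-self-dual splitting since $*^2=\mathrm{Id}$ on $2$-forms in neutral signature) are exactly the points that need verifying and are handled correctly.
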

\vspace{0.1in}

The $K3$ 4-manifold, as well as the 4-manifolds ${\mathbb{ C}}P^2\#k\overline{\mathbb CP}^2$ for $k=3,5,7$, admit Riemannian Einstein metrics and isometric almost paracomplex structures, but, as a consequence of Theorem \ref{t:6}, these almost paracomplex structures cannot be parallel.

\vspace{0.1in}
\noindent{\bf Acknowledgements}:

Most of the work described in this paper was carried out in collaboration with Guillem Cobos, Nikos Georgiou and Wilhelm Klingenberg, with whom it has been a pleasure to learn. Thanks are due to Morgan Robson for assistance with the Figures. Any opinions expressed are entirely the author's. 

\vspace{0.1in}

\end{document}